\newcommand{\mylabel}[2]{#2\def\@currentlabel{#2}\label{#1}}
\DeclareMathOperator{\m}{m}
\DeclareMathOperator{\dis}{d}
\DeclareMathOperator{\rr}{r}
\DeclareMathOperator{\h}{h}
\DeclareMathOperator{\rf}{rf}
\DeclareMathOperator{\f}{f}
\DeclareMathOperator{\En}{En}
\DeclareMathOperator{\diag}{diag}
\DeclareMathOperator{\sd}{sd}
\DeclareMathOperator{\Li}{L}
\DeclareMathOperator{\Lel}{LEL}
\DeclareMathOperator{\IE}{IE}
\DeclareMathOperator{\SW}{SW}
\DeclareMathOperator{\F}{\mathcal{F}}
\DeclareMathOperator{\M}{\mathcal{M}}
\DeclareMathOperator{\Ma}{M}
\DeclareMathOperator{\G}{\mathcal{G}}
\DeclareMathOperator{\rrd}{rd}
\newcommand{\changefont}{%
    \fontsize{9}{12}\selectfont}
\newtheorem{thm}{Theorem}[section]
\newtheorem{prop}[thm]{Proposition}
\newtheorem{lem}[thm]{Lemma}
\newtheorem{cor}[thm]{Corollary}
\theoremstyle{remark}
\newtheorem{defn}[thm]{Definition}
\newtheorem*{thm*}{Theorem}
\newif\ifdetails
\newcommand{\DETAIL}[1]%
{\ifdetails\par\fbox{\begin{minipage}{0.9\linewidth}\textit{Detail:}
      #1\end{minipage}}\par\fi}
\newcommand{\TODO}[1]%
{\ifdetails\par\fbox{\begin{minipage}{0.9\linewidth}\textbf{TODO:}
      #1\end{minipage}}\par\fi}
\newcommand{\old}[1]{{}}
\title{Extremal trees with fixed degree sequence}
\author{Eric O. D. Andriantiana}
\address{Eric O. D. Andriantiana\\
Department of Mathematics (Pure and Applied)\\
Rhodes University, PO Box 94\\
6140 Grahamstown\\
South Africa}
\email{E.Andriantiana@ru.ac.za}
\author{Valisoa Razanajatovo Misanantenaina}
\address{Valisoa Razanajatovo Misanantenaina\\
Department of Mathematical Sciences\\
Stellenbosch University\\
Private Bag X1\\
Matieland 7602\\
South Africa}
\email{valisoa@aims.ac.za}
\author{Stephan Wagner}
\address{Stephan Wagner\\
Department of Mathematics \\
Uppsala Universitet \\
Box 480 \\
751 06 Uppsala \\
Sweden
\and
Department of Mathematical Sciences\\
Stellenbosch University\\
Private Bag X1\\
Matieland 7602\\
South Africa
}
\email{stephan.wagner@math.uu.se,swagner@sun.ac.za}
\thanks{This work was supported by the National Research Foundation of South Africa (grants 96236 and 96310).}
\subjclass[2010]{Primary 05C05; secondary 05C07, 05C09, 05C35, 05C92}
\keywords{Trees, degree sequence, greedy trees, $\M$-trees, recurrence rule, extremal problems}
\begin{document}

\begin{abstract}
The greedy tree $\G(D)$ and the $\M$-tree $\M(D)$ are known to be extremal among trees with degree sequence $D$ with respect to various graph invariants. This paper provides a general theorem that covers a large family of invariants for which $\G(D)$ or $\M(D)$ is extremal. Many known results, for example on the Wiener index, the number of subtrees, the number of independent subsets and the number of matchings follow as corollaries, as do some new results on invariants such as the number of rooted spanning forests, the incidence energy and the solvability. 
We also extend our results on trees with fixed degree sequence $D$ to the set of trees whose degree sequence is majorised by a given sequence $D$, which also has a number of applications.
\end{abstract}

\maketitle

\section{introduction}
 In the context of chemical graph theory, (molecular) graphs are used to model molecules: the vertices of the graph represent the atoms of the molecule, and the edges correspond to the chemical bonds between atoms. Thus vertex degrees amount to valencies of atoms. If the entries of  $D=(d_1,d_2,\dots,d_n)$ are possible valencies of atoms, then the set 
 of all graphs with degree sequence $D$ contains the molecular graphs of all possible isomers of a certain molecule. This is one of the motivations to study graphs with a given degree sequence. In this paper, we will be particularly concerned with trees whose degree sequence is given.

The \emph{greedy tree} $\G(D)$ with degree sequence $D$, formally defined in the next section, is a tree that can be constructed by starting with the largest degree vertex and always assigning the largest available degree to a neighbour of the vertex with the largest degree whose neighbour degrees are not yet fully specified. On the other hand, large degrees and small degrees alternate in the \emph{$\M$-tree} $\M(D)$ with degree sequence $D$, see  Definition~\ref{Def:AltGreed}. 

The greedy tree $\G(D)$ and the $\M$-tree $\M(D)$ are known to be extremal for a number of graph invariants. Let us list a few examples.
\begin{itemize}
\item The sum $W(G)$ of the distances $\dis(u,v)$ between all (unordered) pairs of vertices $\{u, v\}$, better known as the Wiener index, is among the most popular graph invariants. It was shown by Wang in \cite{WangWi} and by Zhang et al.~in \cite{Zhang} that, among all trees with degree sequence $D$, $\G(D)$ has the minimum Wiener index.

\item  Schmuck et al.~\cite{Schmuck} showed a more general statement: if $W(G)$ is replaced by a Wiener-like invariant
\[
W_f(G)=\sum_{\{u,v\}\subseteq V(G)}f(\dis(u,v))
\]
for a nonnegative and nondecreasing function $f$, then $\G(D)$ still attains the minimum value among trees with degree sequence $D$. Likewise, if $f$ is a nonnegative and nonincreasing function, then $\G(D)$ attains the maximum value. In particular, this includes a well-studied invariant known as the Harary index, which corresponds to $f(x)=1/x$ (see \cite{Wagnerhar}).
\item  The greedy tree $\G(D)$ is extremal with respect to several other invariants including:
\begin{itemize}
\item the number of subtrees (and the number of antichains when rooted trees are considered) \cite{Andriantiana1, Zhang1},
\item the spectral radius \cite{biyi} and the Laplacian spectral radius \cite{Zhang2}, 
\item the spectral moments \cite{Andriantiana2, Shuchao}. See also \cite{cgjz} for a generalisation.
\end{itemize}

\item The total number of independent subsets is also known as the Merrifield-Simmons index of a graph. It is proven in \cite{Dadah1} that the Merrifield-Simmons index is maximised by $\M(D)$. The total number of matchings is also called Hosoya index. It is minimised by $\M(D)$ (see \cite{Dadah1}).

\item The energy of a graph $G$ is defined as
$
\En(G)=\sum_{i}|\lambda_i|,
$
where the $\lambda_i$s are the eigenvalues of the adjacency matrix of $G$. For trees, the energy is connected to matchings by the well-known Coulson formula \cite{Gutman3}: let $\m(G,k)$ be the number of matchings of cardinality $k$ in a graph $G$. For every tree $T$,
\begin{equation}
\label{Eq:EnTree}
\En(T)=\frac{2}{\pi}\int_{0}^{\infty}\frac{dx}{x^2}\log\sum_{k\geq 0}\m(T,k)x^{2k}.
\end{equation}
For a positive real number $x$, define 
\[
\Ma(G,x)=\sum_{k\geq 0}\m(G,k)x^k.
\]
It is shown in \cite{Dadah1} that, among all trees with degree sequence $D$, $\M(D)$ is the unique tree with minimum $\Ma(.,x)$ for any $x>0$. Thus, it also has the minimum energy.
\end{itemize}

The long list of graph invariants for which $\G(D)$ or $\M(D)$ is extremal leads us to the following natural question: what condition(s) does a graph invariant need to satisfy for $\G(D)$ or $\M(D)$ to be extremal among trees with degree sequence $D$? In this paper, we provide sufficient conditions that cover a number of examples, both old and new. The idea is inspired by an exchange lemma used to study bounded degree trees in \cite{Wagn}, and trees with given degree sequence in \cite{Dadah1}.

A sequence $B=(b_1,b_2,\dots,b_n)$ is said to majorise $D$ if $b_1 + \dots + b_n = d_1 + \dots + d_n$ and $d_1+\dots+d_k\leq b_1+\dots+b_k$ for every $k$ with $1\leq k< n.$ We then write $D\lhd B$. In Section~\ref{bounded}, we will consider the set of all trees whose degree sequence is majorised by $D$. Several natural sets of trees satisfy majorisation properties.  This includes  the set of all $n$-vertex trees, $n$-vertex trees with bounded degrees, and trees of order $n$ with fixed number of leaves. There are several instances of graph invariants for which the extremal trees with degree sequence $D$ are also extremal among those whose degree sequence is majorised by $D$. We will see many such examples at the end of this paper.

We need to introduce some notation to describe our results.
Let $T$ be a rooted tree. The root of $T$ is denoted by $\rr(T)$.
We write $T=[T_1,T_2,\dots, T_k]$ if the rooted trees $T_1,T_2,\dots, T_k$ are the connected components of $T-\rr(T)$, where the roots of $T_1,T_2,\dots,T_k$ are the neighbours of $\rr(T)$.

Let $v$ and $w$ be two different leaves of a tree $H$. The tree obtained by merging the root of $[L_1,L_2,\dots,L_k]$ with $v$ and the root of $[R_1,R_2,\dots,R_{\ell}]$ with $w$, as seen in Figure~\ref{cha5.fig1}, is denoted by $[L_1,L_2,\dots,L_k]vHw[R_1,R_2,\dots,R_{\ell}]$. 
 
\begin{figure}[H]
\centering
\begin{tikzpicture}[scale=0.7]
\node[fill=black,circle,inner sep=1pt]  at (0,0) {};
\node[fill=black,circle,inner sep=1pt]  at (0,2.21) {};
\node[fill=black,circle,inner sep=1pt]  at (0,-3) {};
\node[fill=black,circle,inner sep=1pt]  at (4,0) {};
\node[fill=black,circle,inner sep=1pt]  at (4,2.21) {};
\node[fill=black,circle,inner sep=1pt]  at (4,-3) {};
\node[fill=black,circle,inner sep=1pt]  at (1,0) {};
\node[fill=black,circle,inner sep=1pt]  at (3,0) {};
\draw (0,0)--(-1,1)--(-1,-1)-- (0,0);
\draw (0,-3)--(-1,-2)--(-1,-4)-- (0,-3);
\draw (0,2.2)--(-1,3.2)--(-1,1.2)-- (0,2.2);
\draw (4,0)--(5,1)--(5,-1)--(4,0);
\draw (4,-3)--(5,-2)--(5,-4)--(4,-3);
\draw (4,2.2)--(5,3.2)--(5,1.2)--(4,2.2);
\draw (0,0)--(1,0)--(0,2.2);
\draw (1,0)--(0,-3);
\draw (4,0)--(3,0)--(4,2.2);
\draw (3,0)--(4,-3);
\draw[dashed] (2,0) circle (1);
\node at (2,1.2) {$H$};
\node at (2.8,0.2) {$w$};
\node at (1.2,0.2) {$v$};
\node at (-0.5,2.2) {$L_1$};
\node at (-0.5,0) {$L_2$};
\node at (-0.5,-3) {$L_{k}$};
\node at (4.5,2.2) {$R_1$};
\node at (4.5,0) {$R_2$};
\node at (4.5,-3) {$R_{\ell}$};
\node[fill=black,circle,inner sep=0.5pt]  () at (0,-1) {};
\node[fill=black,circle,inner sep=0.5pt]  () at (0,-1.6) {};
\node[fill=black,circle,inner sep=0.5pt]  () at (0,-2.2) {};
\node[fill=black,circle,inner sep=0.5pt]  () at (4,-1) {};
\node[fill=black,circle,inner sep=0.5pt]  () at (4,-1.6) {};
\node[fill=black,circle,inner sep=0.5pt]  () at (4,-2.2) {};
\end{tikzpicture}

\caption{The tree $[L_1,L_2,\dots,L_k]vHw[R_1,R_2,\dots,R_{\ell}]$.}
\label{cha5.fig1}
\end{figure}
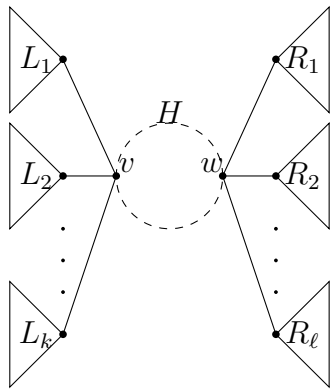 

Let $\rho(T)$ be a quantity associated to a rooted tree $T$, which satisfies a recursive relation of the form

\begin{equation}
\rho([T_1,T_2,\dots,T_k])=f_{\rho}(\rho(T_1),\rho(T_2),\dots,\rho(T_k))\label{cha5.eq1}
\end{equation}
for a symmetric function $f_{\rho}$, i.e.,~the value of $f_{\rho}$ is invariant under permutation of the branches. We call $f_{\rho}$ the \emph{recurrence rule} for $\rho$. In our examples, the recurrence rule can always be expressed as a function of sums or products over all branches.

\begin{defn}\label{cha5.defn1}
We say that a tree $T$ is \emph{$\rho$-exchange-extremal} if, whenever we can decompose $T$ as

\[T=[L_1,L_2,\dots,L_k]vHw[R_1,R_2,\dots,R_{\ell}]\]
for some $H$, then we have $k \geq \ell$ and 
\[\min\{\rho(L_1),\rho(L_2),\dots, \rho(L_{k})\} \geq \max\{\rho(R_1),\rho(R_2),\dots,\rho(R_{\ell})\}\]
or $k \leq \ell$ and
\[\max\{\rho(L_1),\rho(L_2),\dots, \rho(L_{k})\} \leq \min\{\rho(R_1),\rho(R_2),\dots,\rho(R_{\ell})\}.\]
\end{defn}

We will prove the following theorem:
\begin{thm*}
Let $\rho$ be an invariant of rooted trees that satisfies the recurrence relation \eqref{cha5.eq1}. Suppose that a tree $T$ with degree sequence $D$ is $\rho$-exchange-extremal. 
\begin{itemize}
\item If $f_{\rho}$ is increasing (in each variable and under addition of further variables) and the minimum of $\rho$ is attained by the single-vertex tree, then $T$ has to be $\G(D)$.
\item If $f_{\rho}$ is decreasing (in each variable and under addition of further variables) and the maximum of $\rho$  is attained by the single-vertex tree, then $T$ has to be $\M(D)$.
\end{itemize}
\end{thm*}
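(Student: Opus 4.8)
The first step is to read Definition~\ref{cha5.defn1} structurally. A decomposition $T=[L_1,\dots,L_k]vHw[R_1,\dots,R_{\ell}]$ as in Figure~\ref{cha5.fig1} amounts to nothing more than a choice of two distinct vertices $v,w$ of $T$: since $v$ and $w$ must be leaves of $H$, the trees $L_1,\dots,L_k$ are exactly the branches of $T$ at $v$ that avoid $w$ (so $k=\deg(v)-1$), the $R_j$ are the branches at $w$ that avoid $v$ (so $\ell=\deg(w)-1$), and $H$ is then forced. Hence $T$ is $\rho$-exchange-extremal if and only if, for every pair $\{v,w\}\subseteq V(T)$, the endpoint of the weakly larger degree, say $v$, satisfies $\min\{\rho(B):B\text{ a branch at }v\text{ avoiding }w\}\ge\max\{\rho(B'):B'\text{ a branch at }w\text{ avoiding }v\}$, with at least one of the two directions holding when $\deg(v)=\deg(w)$; in particular the condition is vacuous whenever $v$ or $w$ is a leaf, so only vertices of degree at least $2$ matter. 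I would then record the two monotonicity primitives that \eqref{cha5.eq1} yields: if $f_{\rho}$ is increasing in each variable and under adjoining a further variable and $\rho$ is minimised by the single-vertex tree, then (P1) replacing a branch of a rooted tree by a rooted tree of weakly larger $\rho$-value does not decrease $\rho$, and (P2) adjoining an extra branch at the root does not decrease $\rho$; informally, ``bigger rooted tree $\Rightarrow$ larger $\rho$''. In the opposite case ($f_{\rho}$ decreasing, $\rho$ maximised by the single-vertex tree) the mirror statements (P1$'$), (P2$'$) hold with ``decrease'' replaced by ``increase'', so ``bigger rooted tree $\Rightarrow$ smaller $\rho$'', and a single pendant branch has the largest $\rho$-value of all.

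For the greedy case I would root $T$ at a vertex $r$ of maximum degree $\Delta=d_1$ and show, working outward from $r$, that the degrees of $T$ are laid out exactly by the breadth-first ``largest available degree'' rule defining $\G(D)$ in the next section. The engine is always the same: feed a well-chosen pair of vertices into the reformulated exchange condition, and use (P1)--(P2) to convert the resulting $\rho$-inequalities into degree-inequalities and back. For $u\ne r$, the branch of $T$ at $u$ pointing towards $r$ --- call it the \emph{outward} branch --- contains $r$ and the whole high-degree ``core'' reached along the path to $r$; by (P2) and a minimal-defect choice of $u$, this outward branch is $\rho$-heaviest among all branches at $u$. Comparing a vertex $u$ with a descendant $c$ then forces all branches at the higher-degree endpoint to dominate (in $\rho$) all branches at the other; a minimal violation of $\deg(u)\ge\deg(c)$ (for $u$ on the path from $r$ to $c$) would force every branch of $c$ to out-weigh the $\rho$-heavy outward branch of $u$, which is impossible, so degrees are non-increasing from $r$. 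Comparing siblings, and more generally cousins at equal distance from $r$, then forces that at each vertex the children with $\rho$-larger subtrees carry the larger degrees and are ``filled first'', and that consecutive levels absorb the largest still-unused degrees. This is precisely the recursive description of $\G(D)$, and since ties (several vertices of a common degree, or branches of equal $\rho$-value) only permute isomorphic pieces, $T\cong\G(D)$.

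The $\M$-tree case is the mirror image. Now $\rho$ is maximised by the single-vertex tree, so by (P1$'$)--(P2$'$) a ``bigger'' rooted tree has a \emph{smaller} $\rho$-value. Running the same argument, the reformulated exchange condition now forces every large-degree vertex to carry only $\rho$-\emph{large} branches --- that is, \emph{small} branches, hence branches rooted at vertices of small degree --- while small-degree vertices may carry $\rho$-smaller, i.e.\ larger, branches rooted at vertices of larger degree. Propagating this along every path from the root produces exactly the alternation of large and small degrees that characterises $\M(D)$ in Definition~\ref{Def:AltGreed}, again up to harmless reshuffling of equal pieces, so $T\cong\M(D)$.

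The heart of the proof --- and the step I expect to fight with --- is the bookkeeping in the two middle paragraphs. The exchange condition speaks only through the single real number $\rho$, which a priori cannot distinguish non-isomorphic branches; the work is to show that, together with (P1)--(P2) (resp.\ their mirrors), it nevertheless rigidly determines the whole degree distribution of $T$. Two points require genuine care: first, the claim that the outward branch is the $\rho$-extreme branch at each non-root vertex, which is what licenses translating degree comparisons into $\rho$-comparisons and is best handled by choosing a minimal defect so that the path towards the root is already known to be structured; second, handling pairs $\{v,w\}$ that are neither in an ancestor--descendant nor in a sibling relation, and checking that the extracted structure is canonical up to isomorphism, so that ``$T$ has to be $\G(D)$'' and ``$T$ has to be $\M(D)$'' hold literally.
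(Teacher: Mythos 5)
Your structural reading of Definition~\ref{cha5.defn1} is correct, and your primitives (P1)--(P2) are valid consequences of \ref{cond2} and \ref{cond3}. But the proposal stops exactly where the proof has to begin. The central gap is your claim that, at each non-root vertex $u$, the ``outward'' branch is the $\rho$-extreme branch at $u$, and more generally that $\rho$-comparisons between branches can be traded for degree and size comparisons. The branches at $u$ are pairwise \emph{disjoint} complete branches, and (P1)--(P2) only compare \emph{nested} rooted trees (replace a branch by one of larger $\rho$, or adjoin a branch at the root); they say nothing about two disjoint, incomparable branches, so the slogan ``bigger tree $\Rightarrow$ larger $\rho$'' is not available in the form you use it. The only access to comparisons of disjoint branches is the exchange condition itself, applied to well-chosen vertex pairs, and turning those local, conditional inequalities into a globally consistent ordering requires a genuine induction. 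This is precisely what the paper isolates as Lemma~\ref{lemthm}: by induction on $\max\{\h(A),\h(B)\}$ one shows that for disjoint complete branches of a $\rho$-exchange-extremal tree, $\rho(A)\geq\rho(B)$ if and only if $\rho_0(A)\geq\rho_0(B)$, where $\rho_0$ is the number of vertices (the strictness in \ref{cond2} is what rules out the ``wrong'' alternative of Definition~\ref{cha5.defn1} except in a degenerate all-equal case). With that in hand, $T$ is $\rho_0$-exchange-extremal and one quotes the known characterisation (Theorem~\ref{cha5.thm0}, from \cite{Schmuck,szekely}) of such trees as greedy trees. Your plan instead proposes to rebuild the greedy structure from scratch for an arbitrary $\rho$, which conflates the reduction step with the (already nontrivial) $\rho_0$-characterisation and leaves both unproved.

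The second, larger gap is the $\M$-tree case. ``Alternation of large and small degrees'' does not determine a tree: Definition~\ref{Def:AltGreed} is a delicate recursive construction (which star is attached to a leaf adjacent to which $v_s$), and the paper explicitly warns that $\M(D)$ differs from the naive ``alternating greedy tree'' of \cite{wang14}. The paper handles this by the same reduction: Theorem~\ref{cha5.thmm} passes from $\rho$-exchange-extremality to $\rho_3$-exchange-extremality for the Hosoya-type ratio $\rho_3$ and then invokes Theorem~22 of \cite{Dadah1}, a substantial result in its own right. Your one-paragraph mirror argument does not come close to reconstructing this structure. In short, the direct strategy is not unreasonable, but as written the proposal is a plan with its two central steps missing; the most economical repair is to prove the analogue of Lemma~\ref{lemthm} and reduce to the two reference invariants $\rho_0$ and $\rho_3$, rather than re-deriving the greedy and $\M$-tree characterisations for general $\rho$.
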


It turns out that extremality with respect to many different invariants implies $\rho$-exchange-extremality for a suitable choice of $\rho$. This includes some of the previously mentioned instances (Wiener index, number of subtrees, Merrifield-Simmons index, Hosoya index, etc.), but we will also discuss examples that have not been considered before. 

The rest of this paper is organised as follows. Section~\ref{recinc} treats the case of increasing $f_{\rho}$, where the greedy tree $\G(D)$ is extremal. The case of decreasing $f_{\rho}$ is considered in Section~\ref{recdec}, it leads to the situation where the $\M$-tree $\M(D)$ is extremal. Various special cases are listed at the end of each section. This includes many known results, but we also obtain several new results, specifically for the number of rooted spanning forests (related to the coefficients of the Laplacian characteristic polynomial), the incidence energy, and an invariant called the solvability. We also settle an open question on the Steiner Wiener index. Furthermore, in Section~\ref{bounded}, we compare trees with different degree sequences and prove extremality results for the set of all trees whose degree sequence is majorised by a fixed sequence $D$.

The following technical terms will be needed.
The \emph{height} $\h(T)$ of a rooted tree $T$ is the greatest distance of a vertex from $\rr(T)$.
A subgraph $B$ of a tree $T$ is called a \emph{complete branch} of $T$ if there is an edge $e$ such that $B$ is one of the components of $T-e$, see Figure~\ref{cha5.fig}. We denote by $\rrd(B)$ the degree of the root $\rr(B)$ (the end of $e$ that belongs to $B$) as a vertex of $B$.

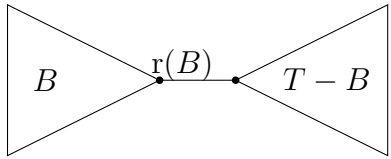
\begin{figure}[H]
\centering
\begin{tikzpicture}[scale=1]
\node[fill=black,circle,inner sep=1pt] (t1) at (0,0) {};
\node[fill=black,circle,inner sep=1pt] (t3) at (1,0) {};
\draw (0,0)--(-2,1)--(-2,-1)-- (0,0)-- (1,0);
\draw (1,0)--(3,1)--(3,-1)-- (1,0);
\node at (0.3,0.2) {$\rr(B)$};
\node at (-1.5,0) {$B$};
\node at (2.2,0) {$T-B$};
\end{tikzpicture}

\caption{Complete branches $B$ and $T-B$ of $T$.}
\label{cha5.fig}
\end{figure}

\section{Increasing recurrence rule $f_{\rho}$}\label{recinc}

\begin{defn} \cite{Schmuck}
Given a degree sequence of a tree $D$, the greedy tree, denoted $\G(D)$, is constructed by the following ``greedy algorithm'':
	\begin{enumerate}
	\item[(i)] Label the vertex with the largest degree $v$ (the root).
	\item[(ii)] Label the neighbours of $v$ as $v_1,v_2,\dots,v_{d(v)}$, and assign the largest degrees available to them such that $d(v_1)\geq d(v_2)\geq \cdots\geq d(v_{d(v)})$.
	\item[(iii)] Label the neighbours of $v_1$ (except $v$) as $v_{1,1},v_{1,2},\dots,v_{1,d(v_1)-1}$, and assign the largest degrees available to them such that $d(v_{1,1})\geq d(v_{1,2})\geq \cdots\geq d(v_{1,d(v_1)-1})$. Then do the same for $v_2,v_3,\dots,v_{d(v)}$.
	\item[(iv)] Repeat (ii) and (iii) for all the newly labelled vertices. Always start with the neighbours of the labelled vertex with the largest degree whose neighbours are not labelled yet.
	\end{enumerate}
\end{defn}
Figure~\ref{Fig:Greedy} shows the greedy tree $\G(D)$ for $D=(4,4,3,3,3,3,3,3,2,2,2,1,1,\dots,1)$.
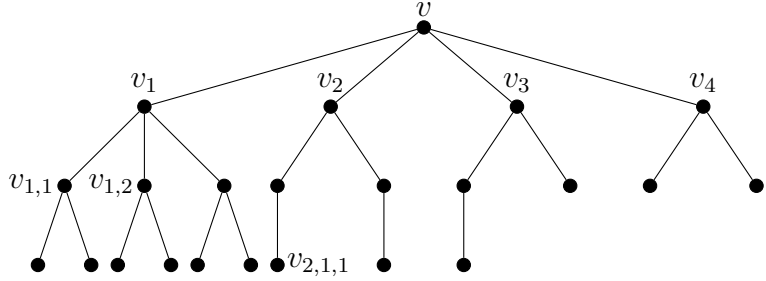
\begin{figure}[h!]
\centering
\begin{tikzpicture}[scale=0.7]
\tikzstyle{every node}=[draw,circle,fill=black,minimum size=5pt,
                            inner sep=0pt]
\node [label=above:$v$]{}[sibling distance=35mm]
    child {  node [label=above:$v_1$]{}  [sibling distance=15mm]
    					child {node[label=left:$v_{1,1}$]{}[sibling distance=10mm]
    						child{node{}} 
    						child{node{}}
    					}
    					child{node[label=left:$v_{1,2}$]{}[sibling distance=10mm]
						child{node{}} 
    						child{node{}}    					
    					} 
    					child{node{}[sibling distance=10mm]
						child{node{}} 
    						child{node{}}    					
    					}}
    	child {  node [label=above:$v_2$]{} [sibling distance=20mm]
                    child { node {} 
                    		child{node[label=right:$v_{2,1,1}$]{}}}
                    child{node{}
						child{node{}}                    
                    }
             }
    child {  node [label=above:$v_3$]{} [sibling distance=20mm]
                    child { node {} 
                    			child{node{}}}
                    child{node{}}
             }
    child {  node [label=above:$v_4$]{} [sibling distance=20mm]
                    child { node {} }
                    child{node{}}
             }  ;
\end{tikzpicture}
\caption{The greedy tree $\G(4,4,3,3,3,3,3,3,2,2,2,1,1,\dots,1)$.}
\label{Fig:Greedy}
\end{figure}

We will see in this section that the greedy tree is the unique $\rho$-exchange-extremal tree if $\rho$ satisfies the following conditions:
\begin{enumerate}[label=\textbf{I.\arabic*}]
\item \label{cond1} the recurrence relation \eqref{cha5.eq1} holds, for some symmetric recurrence rule $f_{\rho}$,
\item  the function $f_{\rho}$ is strictly increasing (strictly increasing in each single variable and strictly increasing under addition of further variables),\label{cond2}
\item $\rho(\bullet)<\rho(B),\label{cha5.eq2}$
for all rooted trees $B$ with $|V(B)|>1$, where $\bullet$ denotes a single vertex tree. \label{cond3}
\end{enumerate}

\subsection{Special case: the Wiener index}

Let us first consider the special case where $\rho(T)$ is the number of vertices of $T$, i.e., $\rho(T)=\rho_0(T)=|V(T)|$. We have

\begin{align*}
\rho_0([T_1,\dots,T_k])=f_{\rho_0}(\rho_0(T_1),\dots,\rho_0(T_k))=1+\sum_{i=1}^k\rho_0(T_i).
\end{align*}

The recurrence rule $f_{\rho_0}$ is indeed symmetric and increasing with respect to each of its variables and under addition of further variables. Clearly $\rho_0(\bullet)=1$ is the minimum among all rooted trees. So conditions~\ref{cond1} to~\ref{cond3} are all satisfied.

Now recall that the Wiener index is the sum of all distances between pairs of vertices:

\begin{equation*}
W(T)=\sum_{u,v \in V(T)}\dis(u,v).
\end{equation*}

An important alternative formula for the Wiener index for trees is (see \cite{dobrynin})

\begin{equation}
W(T)=\sum_{uv \in E(T)}\rho_0(T_u) \rho_0(T_v), \label{cha5.wie4}
\end{equation}
where $T_u$ and $T_v$ are the components of $T-uv$ containing $u$ and $v$ respectively. We use this formula to show that minimality with respect to the Wiener index implies $\rho_0$-exchange-extremality.

\begin{lem}[cf.~\cite{szekely}]\label{cha5.wie6}
Let $T$ be a tree with degree sequence $D$ for which $W(T)$ attains its minimum. Then, for any two disjoint complete branches $A=[A_1,\dots,A_k]$ and $B=[B_1,\dots,B_{\ell}]$ in $T$, we have 
\begin{itemize}
\item either $k \geq \ell$ and $\min\{\rho_0(A_1),\dots,\rho_0(A_{k})\} \geq \max\{\rho_0(B_1),\dots,\rho_0(B_{\ell})\}$,

\item or $k \leq \ell$ and $\max\{\rho_0(A_1),\dots, \rho_0(A_{k})\} \leq \min\{\rho_0(B_1),\dots,\rho_0(B_{\ell})\}.$
\end{itemize}

\end{lem}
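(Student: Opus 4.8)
The plan is to argue by contradiction: suppose $T$ minimises the Wiener index among trees with degree sequence $D$, but the conclusion fails for some pair of disjoint complete branches $A=[A_1,\dots,A_k]$ and $B=[B_1,\dots,B_\ell]$. Write $a=\rr(A)$, $b=\rr(B)$, and let $P$ be the path in $T$ joining $a$ and $b$ (since $A$ and $B$ are disjoint complete branches, $T$ decomposes as $A$, $B$, and a connecting part attached at $a$ and $b$). The idea is to perform a local rearrangement in which one of the branches $A_i$ or $B_j$ is detached from its current root and reattached to the other root, and then to show that, under the hypothesised failure of the conclusion, this strictly decreases $W$, contradicting minimality. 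Crucially, any such rearrangement keeps the degree sequence $D$ unchanged, because we move a whole subtree from one neighbour slot to another (the degree of $a$ drops by one while the degree of $b$ rises by one, or vice versa).

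First I would record how $W$ changes under the move ``detach $A_k$ from $a$ and reattach its root to $b$''. Using the edge formula \eqref{cha5.wie4}, the only edges whose contribution changes are the edge formerly joining $\rr(A_k)$ to $a$, now joining $\rr(A_k)$ to $b$, together with the edges along the path $P$ from $a$ to $b$: for each such edge $e$, the two side-sizes $\rho_0$ shift by $\pm\rho_0(A_k)$ depending on which side $A_k$ lands on. A short computation (the standard ``moving a branch across a path'' estimate, as in \cite{szekely}) shows that the net change in $W$ is a positive multiple of $\rho_0(A_k)$ times the difference of the two sizes being compared on the two ends, so that moving $A_k$ towards whichever of $a,b$ currently carries fewer (or smaller) branches does not increase $W$, and it strictly decreases $W$ unless a precise equality holds. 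Symmetrically for moving a $B_j$ towards $a$.

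Now I would use the failure of the dichotomy. If neither alternative in the statement holds, then we cannot have both ``$k\ge\ell$ with $\min_i\rho_0(A_i)\ge\max_j\rho_0(B_j)$'' and ``$k\le\ell$ with $\max_i\rho_0(A_i)\le\min_j\rho_0(B_j)$''. Splitting into the cases $k>\ell$, $k<\ell$, and $k=\ell$, in each case one finds a specific branch — say $A_i$ with $\rho_0(A_i)<\max_j\rho_0(B_j)$ when $k\ge\ell$, or $B_j$ with $\rho_0(B_j)>\min_i\rho_0(A_i)$ when $k\le\ell$, and when $k=\ell$ one compares $\min$ of one side with $\max$ of the other — whose relocation strictly decreases $W$ by the computation above. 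This contradicts the minimality of $W(T)$, so the dichotomy must hold. I would finally note that when $k=\ell$ one may need to move a branch in one direction and simultaneously balance with a branch in the other to keep the comparison strict without increasing sizes — this is the only slightly delicate bookkeeping point.

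The main obstacle I expect is precisely this $k=\ell$ borderline case and, more generally, pinning down exactly when the relocation gives a \emph{strict} decrease rather than merely no increase; one has to be careful that the chosen branch genuinely violates the relevant inequality (so the size difference driving the change is nonzero) and that the path $P$ has positive length or the reattachment edge itself contributes, so that the positive multiplier is genuinely positive. Everything else — the invariance of the degree sequence under the move, and the edge-formula computation — is routine.
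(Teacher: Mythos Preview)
Your proposal contains a genuine error at its core. You claim that moving a single branch $A_i$ from $a$ to $b$ ``keeps the degree sequence $D$ unchanged, because \dots\ the degree of $a$ drops by one while the degree of $b$ rises by one''. But that is precisely a change in the degree sequence: if $d(a)=k+1$ and $d(b)=\ell+1$, after the move we have degrees $k$ and $\ell+2$, which is a different multiset unless $k=\ell+1$. The operations that \emph{do} preserve the degree sequence are (i) swapping an $A_i$ with a $B_j$, or more generally permuting the $k+\ell$ branches while keeping $k$ of them at $a$ and $\ell$ at $b$, and (ii) switching all the $A_i$'s with all the $B_j$'s, which interchanges the degrees of $a$ and $b$. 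Your single-branch relocation is neither of these.

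Even if you repair this by using swaps, the second step breaks down. The contribution of an edge on the $a$--$b$ path is a product $n_1(n-n_1)$, which is quadratic in the side sizes; a swap that increases the $a$-side by $\delta=\rho_0(B_j)-\rho_0(A_i)$ changes this contribution by $\delta(n_2-n_1)-\delta^2$, whose sign depends on which side of the edge is larger. So ``move the small branch toward the heavier side'' does not yield a monotone decrease in $W$, and your claimed ``positive multiple times a difference of sizes'' formula is not correct. The paper's proof avoids this by a global argument: writing $\alpha=\rho_0(A)$ and $\beta=\rho_0(B)=\Sigma-\alpha$, one expresses the path contribution to $W(T)$ as $\alpha q+\beta p+\alpha\beta(t+1)+\text{const}$, observes that this is a strictly concave function of $\alpha$ along the line $\alpha+\beta=\Sigma$, and concludes that the minimum over all degree-preserving rearrangements occurs when $\alpha$ is either as large or as small as possible---which is exactly the stated dichotomy. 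The concavity is the key idea your sketch is missing.
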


\begin{proof}

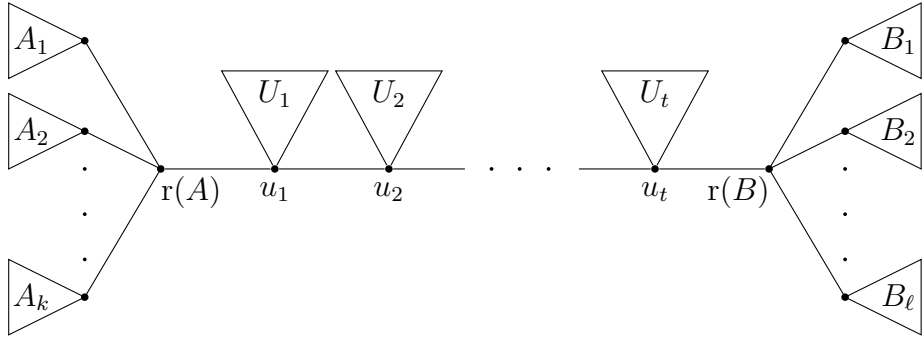
\begin{figure}[H]
\centering
\begin{tikzpicture}[scale=1]
\node[fill=black,circle,inner sep=1pt]  at (0,0.5) {};
\node[fill=black,circle,inner sep=1pt]  at (0,1.7) {};
\node[fill=black,circle,inner sep=1pt]  at (0,-1.7) {};
\node[fill=black,circle,inner sep=1pt]  at (10,0.5) {};
\node[fill=black,circle,inner sep=1pt]  at (10,1.7) {};
\node[fill=black,circle,inner sep=1pt]  at (10,-1.7) {};
\node[fill=black,circle,inner sep=1pt]  (t1) at (1,0) {};
\node[fill=black,circle,inner sep=1pt]  (t2) at (9,0) {};
\node[fill=black,circle,inner sep=1pt]  (t3) at (2.5,0) {};
\node[fill=black,circle,inner sep=1pt]  (t4) at (4,0) {};
\node[fill=black,circle,inner sep=1pt]  (t5) at (7.5,0) {};
\node[fill=black,circle,inner sep=0.5pt]  () at (0,0) {};
\node[fill=black,circle,inner sep=0.5pt]  () at (0,-0.6) {};
\node[fill=black,circle,inner sep=0.5pt]  () at (0,-1.2) {};
\node[fill=black,circle,inner sep=0.5pt]  () at (10,0) {};
\node[fill=black,circle,inner sep=0.5pt]  () at (10,-0.6) {};
\node[fill=black,circle,inner sep=0.5pt]  () at (10,-1.2) {};
\node[fill=black,circle,inner sep=0.5pt]  () at (5.35,0) {};
\node[fill=black,circle,inner sep=0.5pt]  () at (5.72,0) {};
\node[fill=black,circle,inner sep=0.5pt]  () at (6.1,0) {};
\draw (t1)--(t3)--(t4);
\draw (t5)--(t2);
\draw (t4)--(5,0);
\draw (t5)--(6.5,0);
\draw (t3)--(1.8,1.3)--(3.2,1.3)-- (t3);
\draw (t4)--(3.3,1.3)--(4.7,1.3)-- (t4);
\draw (t5)--(6.8,1.3)--(8.2,1.3)-- (t5);
\draw (0,0.5)--(-1,1)--(-1,0)-- (0,0.5);
\draw (0,-1.7)--(-1,-2.2)--(-1,-1.2)-- (0,-1.7);
\draw (0,1.7)--(-1,2.2)--(-1,1.2)-- (0,1.7);
\draw (10,0.5)--(11,0)--(11,1)--(10,0.5);
\draw (10,-1.7)--(11,-2.2)--(11,-1.2)--(10,-1.7);
\draw (10,1.7)--(11,2.2)--(11,1.2)--(10,1.7);
\draw (0,0.5)--(1,0)--(0,1.7);
\draw (1,0)--(0,-1.7);
\draw (10,0.5)--(9,0)--(10,1.7);
\draw (9,0)--(10,-1.7);
\node at (2.5,-0.3) {$u_1$};
\node at (4,-0.3) {$u_2$};
\node at (7.5,-0.3) {$u_{t}$};
\node at (2.5,1) {$U_1$};
\node at (4,1) {$U_2$};
\node at (7.5,1) {$U_{t}$};
\node at (8.6,-0.3) {$\rr(B)$};
\node at (1.4,-0.3) {$\rr(A)$};
\node at (-0.7,1.7) {$A_1$};
\node at (-0.7,0.5) {$A_2$};
\node at (-0.7,-1.7) {$A_{k}$};
\node at (10.7,1.7) {$B_1$};
\node at (10.7,0.5) {$B_2$};
\node at (10.7,-1.7) {$B_{\ell}$};
\end{tikzpicture}

\caption{Decomposition of $T$ in the proof of Lemma~\ref{cha5.wie6}.}
\label{cha5.fig5}
\end{figure}
Let $P=\rr(A) u_1\dots u_t\rr(B)$ be the unique path between $\rr(A)$ and $\rr(B)$. To simplify notation, we put $\alpha=\rho_0(A)=1+\sum_{i=1}^k\rho_0(A_i)$ and $\beta=\rho_0(B)=1+\sum_{i=1}^{\ell}\rho_0(B_i)$.

For $1\leq j\leq t$, let $U_j$ be the component containing $u_j$ when we remove all the edges of the path $P$, and set $z_j=\rho_0(U_j)$, $p_j=z_1+\cdots+z_j$, $q_j=z_{t}+\cdots+z_{t-j+1}$ and $p_0=q_0=0$.

Using the formula for the Wiener index in \eqref{cha5.wie4}, we have:

\begin{align*}
W(T)&=\sum_{j=0}^{t}(\alpha+p_j)(q_{t-j}+\beta)+C_T,
\end{align*}
where $C_T$ is the contribution of edges in $U_1,\dots,U_t,A$ and $B$.

With $q=q_1+\dots+q_t$ and $p=p_1+\dots+p_t$, this can be rewritten as

\[W(T)=\alpha q+\beta p+\alpha \beta (t+1) + \sum_{j=0}^{t} p_jq_{t-j}+C_T.\]

The last two terms $\sum_{j=0}^tp_jq_{t-j}+C_T$ are invariant under any rearrangements of the $A_i$s and $B_i$s. Note that the degree sequence remains the same if all the $A_i$s are switched with all the $B_i$s (interchanging the degrees of $\rr(A)$ and $\rr(B)$), if $A_1,A_2,\ldots,A_k,B_1,B_2,\ldots,B_{\ell}$ are permuted in an arbitrary way, or both. 
Hence, if $W(T)$ is minimal, then the expression $\alpha q+\beta p+\alpha \beta (t+1)$ must have its minimum value under these operations. Note that the sum $\Sigma=\alpha + \beta$ remains constant under any of these rearrangements, and we have
\[\alpha q+ \beta p + \alpha \beta (t+1) = \alpha q + (\Sigma - \alpha)p+ \alpha(\Sigma - \alpha)(t+1),\]

which is a strictly concave function of $\alpha$. Thus the minimum is attained if $\alpha$ is either as small or as large as possible. For this we must have 
\[k\leq \ell \quad\text{and}\, \max\{\rho_0(A_i): 1 \leq i \leq k\} \leq \min\{\rho_0(B_i): 1 \leq i \leq \ell\}\]
or 
\[k \geq \ell \quad \text{and}\, 
\min\{\rho_0(A_i): 1 \leq i \leq k\} \geq \max\{\rho_0(B_i): 1 \leq i \leq \ell\}.\]
In words, the largest possible branches (and as many of them as possible) have to be gathered in one place.
This is precisely the statement of the lemma.

\end{proof}

We know now that the tree which minimises the Wiener index is $\rho_0$-exchange-extremal. Moreover, $\rho_0$-exchange-extremality is equivalent to the semi-regular property defined in Definition~4 of \cite{Schmuck}, where it is proved that such a tree is greedy. Thus we have the following observation, which will be the basis for the proof of the main theorem of this section.
\begin{thm}[cf.~\cite{Schmuck,szekely}]\label{cha5.thm0}
If $T$ is a $\rho_0$-exchange-extremal tree, then $T$ is a greedy tree.
\end{thm}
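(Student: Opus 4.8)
The plan is to show that a $\rho_0$-exchange-extremal tree satisfies the structural characterisation of greedy trees from \cite{Schmuck}, so that Theorem~\ref{cha5.thm0} becomes essentially a translation between two equivalent conditions. Recall that for $\rho_0(T)=|V(T)|$, the value $\rho_0$ of a branch is just its number of vertices, so ordering branches by $\rho_0$ is the same as ordering them by size. The first step is to unpack what $\rho_0$-exchange-extremality says: whenever $T=[L_1,\dots,L_k]vHw[R_1,\dots,R_\ell]$, the larger collection of branches (and, within the same count, the larger branches) must be attached at one of the two leaves. Taking $H$ to be a bare path between two vertices $x$ and $y$ of $T$, the branches $L_i$ hanging off $x$ and the branches $R_j$ hanging off $y$ (on the sides away from the path) must satisfy: the vertex closer to having more/larger pendant branches dominates. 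This is precisely the content of the \emph{semi-regular} property (Definition~4 in \cite{Schmuck}): along any path in the tree, sizes of off-path branches cannot increase then decrease in a way that could be locally ``unbalanced''.

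Second, I would invoke the key lemma of \cite{Schmuck} (their Lemma~5 or the equivalent), which states that a tree with a given degree sequence is semi-regular if and only if it is the greedy tree $\G(D)$. The ``only if'' direction is the one we need: a semi-regular tree is greedy. The argument there proceeds by induction on the number of vertices, rooting the tree at a vertex of maximum degree and showing that the semi-regular condition forces the neighbours of the root to carry the largest available degrees, then forces their neighbours likewise, which is exactly the greedy construction (i)--(iv). Since $\rho_0$-exchange-extremality and semi-regularity are literally the same condition once one translates ``complete branch sizes'' into ``$\rho_0$-values'', the conclusion $T=\G(D)$ follows.

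The only genuine obstacle is making the equivalence between Definition~\ref{cha5.defn1} (specialised to $\rho=\rho_0$) and the semi-regular property airtight. Definition~\ref{cha5.defn1} quantifies over decompositions $T=[L_1,\dots,L_k]vHw[R_1,\dots,R_\ell]$ where $v,w$ are leaves of $H$, whereas the semi-regular property is usually phrased in terms of two vertices and the branches hanging off them on opposite sides of the connecting path. One direction is immediate: given any two vertices $x,y$ of $T$, let $H$ be the path from $x$ to $y$ together with a pendant leaf at each end (or simply take $H$ to be the path $P$ itself and $v=x$, $w=y$, treating them as the attachment points), and let the $L_i$ be the components of $T$ minus the edges incident to $x$ on $P$, except the one containing $x$, and similarly for $R_j$; then Definition~\ref{cha5.defn1} gives exactly the semi-regular inequality at $x,y$. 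Conversely, any decomposition in Definition~\ref{cha5.defn1} with $v,w$ leaves of $H$ yields a pair of vertices (the images of $v$ and $w$) with precisely those off-path branches. I would spell this correspondence out in one short paragraph and then cite \cite{Schmuck} for the remaining implication, keeping the proof of Theorem~\ref{cha5.thm0} to a few lines. I expect no hidden difficulty beyond this bookkeeping, since the heavy lifting is already done in \cite{Schmuck}.
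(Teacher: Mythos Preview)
Your proposal is correct and follows exactly the approach the paper takes: the paper itself simply observes that $\rho_0$-exchange-extremality coincides with the semi-regular property of \cite{Schmuck} (Definition~4 there) and then cites \cite{Schmuck} for the implication that a semi-regular tree is greedy. Your additional bookkeeping on the equivalence is more detailed than what the paper spells out, though note that in your translation $H$ should be the whole subtree between $x$ and $y$ (including branches off interior path vertices), not just the bare path.
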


\subsection{Main result}

We require one more lemma before stating the main result. This lemma establishes a connection between $\rho_0$ and other invariants $\rho$.

\begin{lem}\label{lemthm}
Let $\rho$ be an invariant of rooted trees that satisfies \ref{cond1},\ref{cond2} and \ref{cond3}, and let $T$ be a $\rho$-exchange-extremal tree. For any two disjoint complete branches $A$ and $B$ in $T$, $\rho(A) \geq \rho(B)$ if and only if $\rho_0(A) \geq \rho_0(B)$. In particular, $\rho(A)=\rho(B)$ if and only if $\rho_0(A)=\rho_0(B)$.
\end{lem}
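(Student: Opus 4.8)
I want to show that for a $\rho$-exchange-extremal tree $T$, comparing two disjoint complete branches via $\rho$ is the same as comparing them via $\rho_0 = |V(\cdot)|$. The natural strategy is induction on the sizes (numbers of vertices) of $A$ and $B$, using the recursive structure of complete branches: write $A = [A_1, \dots, A_k]$ and $B = [B_1, \dots, B_\ell]$, where the $A_i$ and $B_j$ are themselves complete branches of $T$ (being components of $T$ minus an edge one step further from the respective root). The base case is when both $A$ and $B$ are single vertices, where $\rho(A) = \rho(B) = \rho(\bullet)$ and $\rho_0(A) = \rho_0(B) = 1$, so the equivalence is trivial.

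**Main steps.** First, since $A$ and $B$ are disjoint complete branches in $T$, and since $\rr(A)$ has a neighbour outside $A$ (the other end of the defining edge) while $\rr(B)$ likewise, we may decompose $T$ in the form $[A_1, \dots, A_k] \, v \, H \, w \, [B_1, \dots, B_\ell]$ for a suitable $H$: take $v = \rr(A)$, $w = \rr(B)$, and let $H$ be what remains after detaching all the $A_i$ from $v$ and all the $B_j$ from $w$ (so that $v, w$ become leaves of $H$). Applying $\rho$-exchange-extremality to this decomposition gives: either $k \ge \ell$ and $\min_i \rho(A_i) \ge \max_j \rho(B_j)$, or $k \le \ell$ and $\max_i \rho(A_i) \le \min_j \rho(B_j)$. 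By the induction hypothesis applied to each pair $(A_i, B_j)$ — which are disjoint complete branches strictly smaller than $A$ and $B$ — the inequality $\rho(A_i) \ge \rho(B_j)$ is equivalent to $\rho_0(A_i) \ge \rho_0(B_j)$, and similarly with $\le$ and $=$. Hence the dichotomy delivered by $\rho$-exchange-extremality is literally the same dichotomy one would get for $\rho_0$: either $k \ge \ell$ and $\min_i \rho_0(A_i) \ge \max_j \rho_0(B_j)$, or $k \le \ell$ and $\max_i \rho_0(A_i) \le \min_j \rho_0(B_j)$.

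**Concluding the equivalence.** Now in the first case ($k \ge \ell$, all $\rho_0(A_i) \ge$ all $\rho_0(B_j)$) we get $\rho_0(A) = 1 + \sum_{i=1}^k \rho_0(A_i) \ge 1 + \sum_{j=1}^\ell \rho_0(B_j) = \rho_0(B)$, since we are summing at least as many terms each of which dominates; simultaneously, because $f_\rho$ is strictly increasing in each variable and under addition of variables (condition \ref{cond2}), and $\rho(A_i) \ge \rho(B_{\sigma(i)})$ after matching up $\ell$ of the $A_i$ with the $B_j$, we get $\rho(A) = f_\rho(\rho(A_1), \dots, \rho(A_k)) \ge f_\rho(\rho(B_1), \dots, \rho(B_\ell)) = \rho(B)$. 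Symmetrically, the second case yields $\rho_0(A) \le \rho_0(B)$ and $\rho(A) \le \rho(B)$. So in every case, $\rho(A)$ and $\rho(B)$ compare the same way as $\rho_0(A)$ and $\rho_0(B)$, which is exactly the claimed equivalence $\rho(A) \ge \rho(B) \iff \rho_0(A) \ge \rho_0(B)$; combining the ``$\ge$'' and ``$\le$'' versions gives the equality statement. One should note the degenerate situation where $\rho(A) = \rho(B)$ but a priori neither case of the dichotomy is forced — but the dichotomy always supplies at least one of the two ordered configurations, and in that configuration the monotonicity argument pins down the corresponding (non-strict) inequality for $\rho_0$ as well, and vice versa, so no gap arises.

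**Expected main obstacle.** The delicate point is handling the strictness bookkeeping and the ``addition of further variables'' clause carefully: when $k > \ell$, passing from $f_\rho(\rho(B_1), \dots, \rho(B_\ell))$ to $f_\rho(\rho(A_1), \dots, \rho(A_k))$ requires both replacing each $\rho(B_j)$ by a larger $\rho(A_i)$ (monotonicity in each variable) and then adjoining the extra branches $A_{\ell+1}, \dots, A_k$ (monotonicity under addition of variables), and one must make sure the two halves of the equivalence — the $\rho$-side and the $\rho_0$-side — are driven by the same case of the exchange-extremal dichotomy, which is what the induction on $|V(A)| + |V(B)|$ (or a suitable lexicographic order on the pair of sizes) guarantees. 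I do not expect any genuinely hard step beyond organising this induction cleanly.
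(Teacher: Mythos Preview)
Your overall strategy matches the paper's: induct, apply $\rho$-exchange-extremality to the decomposition at $\rr(A)$ and $\rr(B)$, and transfer the branch inequalities from $\rho$ to $\rho_0$ via the induction hypothesis. However, there is a genuine gap in your treatment of the equality case. From the dichotomy you correctly deduce that either $[\rho(A)\ge\rho(B)$ and $\rho_0(A)\ge\rho_0(B)]$ or $[\rho(A)\le\rho(B)$ and $\rho_0(A)\le\rho_0(B)]$. But this disjunction does \emph{not} yield the biconditional $\rho(A)\ge\rho(B)\iff\rho_0(A)\ge\rho_0(B)$. Concretely, suppose you land in the second case and moreover $\rho(A)=\rho(B)$; then $\rho(A)\ge\rho(B)$ is true, yet from your argument you only know $\rho_0(A)\le\rho_0(B)$, not $\rho_0(A)\ge\rho_0(B)$. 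Your remark that ``no gap arises'' is therefore unjustified.

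The paper closes this gap using the \emph{strict} monotonicity of $f_\rho$ (condition~\ref{cond2}), which you mention as a ``delicate point'' but never actually deploy. In the second case ($k\le\ell$ and $\max_i\rho(A_i)\le\min_j\rho(B_j)$), strict monotonicity forces that $\rho(A)=\rho(B)$ can only hold when $k=\ell$ and $\rho(A_1)=\cdots=\rho(A_k)=\rho(B_1)=\cdots=\rho(B_\ell)$; the induction hypothesis then gives the corresponding equalities for $\rho_0$, hence $\rho_0(A)=\rho_0(B)$. The symmetric argument (using strict monotonicity of $f_{\rho_0}$) handles the converse. A secondary point: your base case only covers both $A$ and $B$ being single vertices, whereas the inductive step as written assumes both can be decomposed; the paper separately dispatches the case where exactly one of $A,B$ is a single vertex using condition~\ref{cond3}, and you should do likewise (or explicitly allow $k=0$ or $\ell=0$ and check the argument still goes through).
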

\begin{proof}

We reason by induction on the heights of $A$ and $B$, specifically $\max\{\h(A),\h(B)\}$. If $\max\{\h(A),\h(B)\}=0$, then $\h(A)=\h(B)=0$, so $A$ and $B$ are both single vertices and the claim holds trivially. Assume that the statement holds whenever $\max\{\h(A),\h(B)\} \leq t$, for some $t \geq 0$. 
Now, consider two complete branches $A$ and $B$ in $T$ such that $\max\{\h(A),\h(B)\}=t+1$ and $\rho(A)\geq \rho(B)$. If $A$ only has a single vertex, then so does $B$ by condition \ref{cond3}. If $B$ only has a single vertex, then $\rho_0(A) \geq \rho_0(B)$ is trivially satisfied. In both cases, there is nothing left to show. So we can now assume that $A$ and $B$ have more than one vertex and can thus be decomposed. Let $A=[A_1,\dots,A_k]$ and $B=[B_1,\dots,B_{\ell}]$ for some $k,\ell\geq 1$. Since $T$ is a $\rho$-exchange-extremal tree, we are left with two possibilities. If $k \geq \ell$, and 
$\min\{\rho(A_i):1\leq i \leq k \} \geq \max\{\rho(B_i):1 \leq i \leq \ell\},$ then by the induction hypothesis, we have 
$
\min\{\rho_0(A_i): 1 \leq i \leq k\} \geq \max \{\rho_0(B_i): 1 \leq i \leq \ell\}. 
$
Thus $\rho_0(A)\geq \rho_0(B)$, because $f_{\rho_0}$ is increasing. 

On the other hand, if $k\leq \ell$ and 
$\max\{\rho(A_i):1\leq i \leq k \} \leq \min\{\rho(B_i):1 \leq i \leq \ell\},$ then $\rho(A) \leq \rho(B)$ since $f_{\rho}$ is increasing, so $\rho(A)=\rho(B)$. For this to hold, we must have
$k=\ell$ and $\rho(A_1)=\cdots=\rho(A_k)=\rho(B_1)=\cdots=\rho(B_{\ell})$. By the induction hypothesis, this implies $\rho_0(A_1)=\cdots=\rho_0(A_k)=\rho_0(B_1)=\cdots=\rho_0(B_{\ell})$, so $\rho_0(A)=\rho_0(B)$.

Thus we have shown that $\rho(A) \geq \rho(B)$ implies $\rho_0(A) \geq \rho_0(B)$, and the proof of the converse is analogous.
\end{proof}

Now we are ready for the main theorem.
\begin{thm}\label{cha5.thm}
Let $\rho$ be an invariant of rooted trees that satisfies \ref{cond1},\ref{cond2} and \ref{cond3}. If a tree $T$ is $\rho$-exchange-extremal, then $T$ is a greedy tree.
\end{thm}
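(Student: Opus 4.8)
The plan is to reduce Theorem~\ref{cha5.thm} to the already-established Theorem~\ref{cha5.thm0} by transferring the exchange-extremality from $\rho$ to $\rho_0$. Concretely, I would show: if $T$ is $\rho$-exchange-extremal (where $\rho$ satisfies \ref{cond1}, \ref{cond2}, \ref{cond3}), then $T$ is also $\rho_0$-exchange-extremal. Once this is done, Theorem~\ref{cha5.thm0} immediately gives that $T$ is a greedy tree, and we are finished. The bridge between the two notions is precisely Lemma~\ref{lemthm}, which says that for disjoint complete branches $A$ and $B$ of $T$, the orderings induced by $\rho$ and by $\rho_0$ coincide: $\rho(A) \ge \rho(B) \iff \rho_0(A) \ge \rho_0(B)$, with equality matching equality.

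The key step is the translation of the defining inequalities. Suppose $T$ is $\rho$-exchange-extremal and fix an arbitrary decomposition $T = [L_1,\dots,L_k]\,v\,H\,w\,[R_1,\dots,R_{\ell}]$. By hypothesis, either (a) $k \ge \ell$ and $\min_i \rho(L_i) \ge \max_j \rho(R_j)$, or (b) $k \le \ell$ and $\max_i \rho(L_i) \le \min_j \rho(R_j)$. Each $L_i$ and each $R_j$ is a complete branch of $T$ (they are components obtained by deleting an appropriate edge), and moreover any two of them are disjoint complete branches, so Lemma~\ref{lemthm} applies to every relevant pair. In case (a), pick indices realising $\min_i \rho(L_i) = \rho(L_{i_0})$ and $\max_j \rho(R_j) = \rho(R_{j_0})$; for every $i,j$ we have $\rho(L_i) \ge \rho(L_{i_0}) \ge \rho(R_{j_0}) \ge \rho(R_j)$, hence $\rho(L_i) \ge \rho(R_j)$, and Lemma~\ref{lemthm} converts this to $\rho_0(L_i) \ge \rho_0(R_j)$ for all $i,j$; therefore $\min_i \rho_0(L_i) \ge \max_j \rho_0(R_j)$, which together with $k \ge \ell$ is exactly the first alternative in Definition~\ref{cha5.defn1} for $\rho_0$. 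Case (b) is handled symmetrically, yielding $k \le \ell$ and $\max_i \rho_0(L_i) \le \min_j \rho_0(R_j)$. Since the decomposition was arbitrary, $T$ is $\rho_0$-exchange-extremal.

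I do not anticipate a serious obstacle here: the main work has already been absorbed into Lemma~\ref{lemthm} (which in turn used the induction on heights together with conditions \ref{cond2} and \ref{cond3}) and into Theorem~\ref{cha5.thm0} (which identifies $\rho_0$-exchange-extremal trees as greedy trees via the semi-regular characterisation from~\cite{Schmuck}). The only mild subtlety is bookkeeping: one must note that the $L_i$ and $R_j$ appearing in a decomposition of $T$ are genuinely complete branches of $T$ and are pairwise disjoint, so that Lemma~\ref{lemthm} is legitimately applicable to each pair — but this is immediate from the definition of the operation $[\,\cdot\,]vHw[\,\cdot\,]$, since deleting the edge joining $v$ to the root of $L_i$ (respectively $w$ to the root of $R_j$) exhibits $L_i$ (respectively $R_j$) as a component. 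Thus the proof is essentially a one-paragraph argument once the preceding lemmas are in place.

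\begin{proof}
By Theorem~\ref{cha5.thm0}, it suffices to show that $T$ is $\rho_0$-exchange-extremal. Consider any decomposition
\[
T = [L_1,L_2,\dots,L_k]\,v\,H\,w\,[R_1,R_2,\dots,R_{\ell}].
\]
Each $L_i$ is a complete branch of $T$: it is the component of $T-e$ containing the root of $L_i$, where $e$ is the edge joining that root to $v$; similarly each $R_j$ is a complete branch of $T$. Moreover, any two of the branches $L_1,\dots,L_k,R_1,\dots,R_\ell$ are pairwise disjoint, so Lemma~\ref{lemthm} applies to each pair.

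Since $T$ is $\rho$-exchange-extremal, one of the following holds.

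Case 1: $k \ge \ell$ and $\min\{\rho(L_1),\dots,\rho(L_k)\} \ge \max\{\rho(R_1),\dots,\rho(R_\ell)\}$. Then for every $i$ and $j$ we have $\rho(L_i) \ge \min_i \rho(L_i) \ge \max_j \rho(R_j) \ge \rho(R_j)$, so $\rho(L_i) \ge \rho(R_j)$. By Lemma~\ref{lemthm}, this gives $\rho_0(L_i) \ge \rho_0(R_j)$ for all $i,j$, hence
\[
\min\{\rho_0(L_1),\dots,\rho_0(L_k)\} \ge \max\{\rho_0(R_1),\dots,\rho_0(R_\ell)\}.
\]

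Case 2: $k \le \ell$ and $\max\{\rho(L_1),\dots,\rho(L_k)\} \le \min\{\rho(R_1),\dots,\rho(R_\ell)\}$. Then for every $i$ and $j$ we have $\rho(L_i) \le \rho(R_j)$, and Lemma~\ref{lemthm} yields $\rho_0(L_i) \le \rho_0(R_j)$, hence
\[
\max\{\rho_0(L_1),\dots,\rho_0(L_k)\} \le \min\{\rho_0(R_1),\dots,\rho_0(R_\ell)\}.
\]

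In either case the condition of Definition~\ref{cha5.defn1} is satisfied for $\rho_0$. As the decomposition was arbitrary, $T$ is $\rho_0$-exchange-extremal, and therefore a greedy tree by Theorem~\ref{cha5.thm0}.
\end{proof}
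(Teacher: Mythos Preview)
Your proof is correct and follows exactly the same approach as the paper: use Lemma~\ref{lemthm} to show that $\rho$-exchange-extremality implies $\rho_0$-exchange-extremality, then invoke Theorem~\ref{cha5.thm0}. The paper's proof is a terse two-line version of what you have written; your added detail (checking that the $L_i$ and $R_j$ are disjoint complete branches and spelling out the min/max translation) is sound and fills in routine bookkeeping the paper leaves implicit.
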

\begin{proof}
By Lemma~\ref{lemthm}, the tree $T$ is also $\rho_0$-exchange-extremal. Theorem \ref{cha5.thm0} gives us the desired result. 
\end{proof}

In the remainder of this section, we describe different applications of Theorem \ref{cha5.thm}.

\subsection{The number of subtrees}\label{subsec:subtrees}

As a first example, we consider the number of subtrees (here, a subtree is any induced subgraph that is connected and thus again a tree). We will be able to show that there is a suitable function $\rho$ such that a tree with the greatest possible number of subtrees among all trees with the same degree sequence is necessarily $\rho$-exchange-extremal. To this end, we need a technical lemma, which will also be useful later.

\begin{lem}
\label{Lem:RearrIneq}
Let $x_1\geq \dots\geq x_{n} \geq 1$ and $a\geq b$ be positive real numbers,  $n\geq k \geq n/2$ and $\mathbb{S}_n$ the set of permutations of $1,\dots,n$. Then
\[a\prod_{i=1}^{k}x_{i}+b\prod_{i=k+1}^{n}x_{i}=\max\left\{a\prod_{i=1}^{\ell}x_{\sigma(i)}+b\prod_{i=\ell+1}^{n}x_{\sigma(i)}: \sigma\in\mathbb{S}_n\text{ and } \ell\in\{n-k,k\}\right\}.\]
Moreover, we have uniqueness: if
\[a\prod_{i=1}^{k}x_{i}+b\prod_{i=k+1}^{n}x_{i}=a\prod_{i=1}^{\ell}x_{\sigma(i)}+b\prod_{i=\ell+1}^{n}x_{\sigma(i)},\]
$a>b$ and the $x_i$s are not all equal to $1$, then $\ell = k$ and $x_{\sigma(1)},\ldots,x_{\sigma(k)}$ are a permutation of $x_1,\ldots,x_k$. If $a=b$, then either the same holds, or $\ell = n-k$ and $x_{\sigma(1)},\ldots,x_{\sigma(n-k)}$ are a permutation of $x_{k+1},\ldots,x_n$.
\end{lem}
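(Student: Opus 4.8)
The plan is to collapse the two-term expression into a one-variable function of a partial product and then apply a short convexity-type inequality. Put $P=\prod_{i=1}^n x_i\ (\ge 1)$. For a permutation $\sigma\in\mathbb{S}_n$ and $\ell\in\{n-k,k\}$, set $Q=\prod_{i=1}^{\ell}x_{\sigma(i)}$; since $\prod_{i=1}^{n}x_{\sigma(i)}=P$, we have $\prod_{i=\ell+1}^{n}x_{\sigma(i)}=P/Q$, so the quantity under consideration equals $h(Q)$ with $h(t)=at+bP/t$. Everything therefore reduces to understanding $h$ on the set of attainable values of $Q$. For a fixed $\ell$, as $\sigma$ varies $Q$ ranges over all products of $\ell$ of the $x_i$, hence lies between the product of the $\ell$ smallest and the product of the $\ell$ largest; using $x_1\ge\cdots\ge x_n\ge 1$ together with $n-k\le k$ (equivalent to $k\ge n/2$), one checks that for \emph{both} admissible values of $\ell$ every attainable $Q$ satisfies $m\le Q\le M$, where $m=\prod_{i=k+1}^{n}x_i$ and $M=\prod_{i=1}^{k}x_i$. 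Crucially, $m$ and $M$ are themselves attained, and $mM=P$.

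Given this, the maximum is almost immediate: for $m\le Q\le M$, using $mM=P$,
\[
h(M)-h(Q)=a(M-Q)+bP\Bigl(\tfrac1M-\tfrac1Q\Bigr)=(M-Q)\Bigl(a-\tfrac{bm}{Q}\Bigr)\ge 0,
\]
because $Q\ge m$ gives $bm/Q\le b\le a$ and $Q\le M$ gives $M-Q\ge 0$. Hence $h(Q)\le h(M)=aM+bm=a\prod_{i=1}^{k}x_i+b\prod_{i=k+1}^{n}x_i$, with equality for $\ell=k$, $\sigma=\mathrm{id}$. This is the claimed formula for the maximum.

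For the uniqueness part, the same identity shows $h(Q)=h(M)$ forces $(M-Q)(a-bm/Q)=0$, and since $Q\ge m$ the second factor vanishes only when simultaneously $Q=m$ and $a=b$. Thus if $a>b$ we must have $Q=M=\prod_{i=1}^{k}x_i$, whereas if $a=b$ we may in addition have $Q=m=\prod_{i=k+1}^{n}x_i$. It remains to convert these equalities into statements about which $x_i$ occupy the first $\ell$ slots. Writing $S=\sigma(\{1,\dots,\ell\})=\{s_1<\cdots<s_\ell\}$, one has $x_{s_j}\le x_j$ for every $j$ (since $s_j\ge j$ and the sequence is nonincreasing); as all factors are positive, $\prod_{j}x_{s_j}=\prod_{j=1}^{\ell}x_j$ forces $x_{s_j}=x_j$ for all $j$, i.e.\ $x_{\sigma(1)},\dots,x_{\sigma(\ell)}$ is a permutation of $x_1,\dots,x_\ell$. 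Combined with $\prod_{j=1}^{\ell}x_j=M=\prod_{i=1}^{k}x_i$ this also pins down $\ell=k$, except in the degenerate situation $x_{n-k+1}=\cdots=x_n=1$, where the choice $\ell=n-k$ merely reproduces the same essential configuration padded with $1$'s. The case $Q=m$ is handled by the mirror-image argument and yields the ``$\ell=n-k$, permutation of $x_{k+1},\dots,x_n$'' alternative of the statement.

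The inequality for the maximum is short once the interval $[m,M]$ and the relation $mM=P$ are isolated, so the real work --- and the main obstacle --- is the equality analysis in the previous paragraph: converting $Q=M$ (or $Q=m$) into the asserted multiset statements and carefully disposing of the borderline cases in which some of the $x_i$ equal $1$, which is exactly where the distinction between $a>b$ and $a=b$ and the hypothesis ``not all $x_i$ equal $1$'' enter.
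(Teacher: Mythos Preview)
Your proof is correct and follows essentially the same route as the paper: both collapse the expression to a one-variable function of the partial product (the paper writes $g(x)=ax+bx^{-1}\prod_i x_i$, your $h$) and identify the extreme attainable values $m=\prod_{i=k+1}^n x_i$ and $M=\prod_{i=1}^k x_i$. The only technical difference is that the paper appeals to strict convexity of $g$ to force the maximum to an endpoint and then compares $g(L)$ with $g(S)$, whereas you bypass convexity via the direct factorisation $h(M)-h(Q)=(M-Q)(a-bm/Q)$; your equality analysis (tracking which $(\sigma,\ell)$ actually realise $Q=M$ or $Q=m$, and isolating the degenerate case $x_{n-k+1}=\cdots=x_n=1$) is in fact more explicit than the paper's brief treatment.
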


In words, Lemma~\ref{Lem:RearrIneq} says that the maximum value is reached when we assemble the largest elements together in the same product.

\begin{proof}
Set $g(x) = ax + b x^{-1} \prod_{i=1}^n x_i$, and note that 
\[a\prod_{i=1}^{\ell}x_{\sigma(i)}+b\prod_{i=\ell+1}^{n}x_{\sigma(i)} = g\Big(\prod_{i=1}^{\ell}x_{\sigma(i)} \Big).\]
Since $g''(x) = 2bx^{-3} \prod_{i=1}^n x_i > 0$, $g$ is strictly convex, so it can only attain its maximum value when $x = \prod_{i=1}^{i= \ell}x_{\sigma(i)}$ is either largest or smallest. The largest possible value of $x$ is clearly $L=\prod_{i=1}^k x_i$, the smallest $S=\prod_{i=k+1}^n x_i$. Comparing the two possibilities, we see that
\[ g(L)-g(S)
=a \prod_{i=1}^k x_i + b\prod_{i=k+1}^{n}x_{i} - a \prod_{i=k+1}^{n}x_{i} - b \prod_{i=1}^k x_i
=(a-b)\left(\prod_{i=1}^k x_i-\prod_{i=k+1}^n x_i\right)\geq 0  \]
since $a \geq b$. We have strict inequality if $a >b$ unless $k=\ell$ and all $x_i$s are equal (in which case $x_{\sigma(1)},\ldots,x_{\sigma(k)}$ are still a permutation of $x_1,\ldots,x_k$) or all $x_i$s are equal to $1$. The statement of the lemma follows.
\end{proof}

The number of subtrees of a tree $T$ is denoted by $\eta(T)$, and the number of subtrees of $T$ containing the vertices $v_1,\dots,v_{\ell}$ is denoted by $\eta(T,v_1,\dots,v_{\ell})$. In the following lemma, we establish the necessary property to apply our main theorem.

\begin{lem}\label{lemsub}
Let $T$ be a tree with degree sequence $D$ for which $\eta(T)$ attains its maximum. For any two disjoint complete branches $A=[A_1,\dots,A_k]$ and $B=[B_1,\dots,B_{\ell}]$ in $T$, we have 

\begin{itemize}
\item either $k \geq \ell$ and \[\min\{\eta(A_1,\rr(A_1)),\dots,\eta(A_{k},\rr(A_k))\} \geq \max\{\eta(B_1,\rr(B_1)),\dots,\eta(B_{\ell},\rr(B_{\ell}))\},\]

\item or $k \leq \ell$ and \[\max\{\eta(A_1,\rr(A_1)),\dots, \eta(A_k,\rr(A_{k}))\} \leq \min\{\eta(B_1,\rr(B_1)),\dots,\eta(B_{\ell},\rr(B_{\ell}))\}.\]
\end{itemize}

In other words, $T$ is $\rho$-exchange-extremal with $\rho(T)= \eta(T,\rr(T))$.
\end{lem}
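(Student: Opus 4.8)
The plan is to mimic the structure of the proof of Lemma~\ref{cha5.wie6}, replacing the additive formula \eqref{cha5.wie4} for the Wiener index with a multiplicative decomposition of $\eta(T)$ along the path joining $\rr(A)$ and $\rr(B)$, and then invoking the rearrangement inequality Lemma~\ref{Lem:RearrIneq} in place of the concavity argument. First I would set up the same decomposition as in Figure~\ref{cha5.fig5}: let $P = \rr(A)u_1\cdots u_t\rr(B)$ be the unique path between the roots of the two complete branches, let $U_1,\dots,U_t$ be the components of $T$ containing $u_1,\dots,u_t$ after deleting the edges of $P$, and root $A$, $B$, $U_j$ at the appropriate vertices on $P$. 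A subtree of $T$ is determined by which of $A$, $U_1,\dots,U_t$, $B$ it enters and, within each such piece that it enters, which subtree containing the attaching vertex it picks; this is exactly the kind of product-over-branches structure the paper mentions. I would write $\eta(T)$ as a sum over the ``interval'' of consecutive pieces along $P$ that the subtree meets, which gives an expression of the shape $\sum_{0 \le i \le j \le t+1} (\text{stuff depending only on the }U\text{'s and the position } i,j)\cdot a \cdot b$, where $a = \eta(A,\rr(A))$ and $b = \eta(B,\rr(B))$ enter multiplicatively exactly when the subtree reaches all the way to $A$, respectively $B$. Collecting terms, $\eta(T)$ should come out as a fixed constant (subtrees not touching $A$ and not touching $B$) plus $a\cdot(\text{const})$ plus $b\cdot(\text{const})$ plus $a\cdot b\cdot(\text{const})$, where all four constants are invariant under permuting the branches $A_1,\dots,A_k,B_1,\dots,B_\ell$ among themselves and under swapping the whole bag of $A_i$'s with the whole bag of $B_i$'s (which keeps the degree sequence fixed, as noted in Lemma~\ref{cha5.wie6}).

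Next I would bring in $x_i := \eta(A_i,\rr(A_i))$ or $\eta(B_i,\rr(B_i))$ as the relevant quantities. Writing $n = k+\ell$ and listing all $n$ of these values in nonincreasing order $x_1 \ge \dots \ge x_n \ge 1$, and using the branch recurrence $\eta([T_1,\dots,T_m],\rr) = \prod_{i=1}^m (1 + \eta(T_i,\rr(T_i)))$ — so that $a$ and $b$ are, up to the bookkeeping above, products of the $(1+x_i)$ over the two groups — the quantity to be maximised is precisely of the form $a\prod_{i \in S}(1+x_i) + b\prod_{i \notin S}(1+x_i)$ handled by Lemma~\ref{Lem:RearrIneq}, once one checks that the relevant coefficient playing the role of the ``$a$'' in that lemma dominates the one playing the role of ``$b$''. (I'd need to be a little careful here: the cleanest route is probably to fix the partition of $\{x_1,\dots,x_n\}$ into a block of size $k$ and a block of size $\ell$ — i.e.\ fix which root gets which degree — separately from the assignment of values to positions, exactly as the two independent rearrangements in Lemma~\ref{cha5.wie6} are treated; then the inner optimisation over value-assignments is governed by Lemma~\ref{Lem:RearrIneq} with $\{n-k,k\} = \{\ell,k\}$.) Maximality of $\eta(T)$ then forces the configuration singled out by Lemma~\ref{Lem:RearrIneq}: the $k$ largest of the $x_i$ sit together in one group and the $\ell$ smallest in the other, which is exactly the dichotomy ($k \ge \ell$ with $\min$ of the $A$-values $\ge \max$ of the $B$-values, or $k \le \ell$ with the reverse) claimed. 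The final sentence of the lemma, that this says $T$ is $\rho$-exchange-extremal with $\rho(T) = \eta(T,\rr(T))$, is then immediate from Definition~\ref{cha5.defn1}, since $\rho(A_i) = \eta(A_i,\rr(A_i))$ and $\rho(B_i) = \eta(B_i,\rr(B_i))$.

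The main obstacle I anticipate is the combinatorial bookkeeping in the decomposition of $\eta(T)$: one must argue cleanly that a subtree of $T$ meeting the path $P$ meets a set of \emph{consecutive} pieces (because subtrees are connected), that its restriction to each piece it meets must contain the corresponding path-vertex, and that the restrictions to distinct pieces can be chosen independently — so that the count factorises as a product — while also correctly accounting for subtrees that avoid the path entirely or lie within a single $U_j$, $A$, or $B$. Getting the four coefficients of $1, a, b, ab$ and verifying that each is a nonnegative constant invariant under the allowed rearrangements, and that the $ab$-coefficient (or more precisely the coefficient in the role of ``$a$'' in Lemma~\ref{Lem:RearrIneq}) is at least the ``$b$''-coefficient, is the crux; everything after that is a direct appeal to Lemma~\ref{Lem:RearrIneq}. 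A secondary, minor point is to handle the degenerate cases $k=0$ or $\ell=0$ (a root that is a leaf of its complete branch) and the case where all $x_i=1$, where the conclusion holds vacuously or trivially, just as in the Wiener-index argument.
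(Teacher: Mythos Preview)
Your proposal is correct and takes essentially the same route as the paper: classify subtrees by whether they contain $v=\rr(A)$ and/or $w=\rr(B)$, write $\eta(T)$ as a constant plus $C_1 a + C_2 b + C_3 ab$ with $a=\prod_i(1+\eta(A_i,\rr(A_i)))$ and $b=\prod_j(1+\eta(B_j,\rr(B_j)))$, note that only $C_1 a + C_2 b$ varies under the allowed rearrangements, and finish with Lemma~\ref{Lem:RearrIneq}. The paper spares itself your path/interval bookkeeping by keeping the middle as an abstract tree $H$ with leaves $v,w$ (so $C_1=\eta(H-w,v)$, $C_2=\eta(H-v,w)$, $C_3=\eta(H,v,w)$); note incidentally that $C_1$ and $C_2$ are not individually invariant under swapping the $A$-bag with the $B$-bag as you claim---they swap with each other---but this is harmless since Lemma~\ref{Lem:RearrIneq} covers both orderings.
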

\begin{proof}

Consider a decomposition of $T$ as $[A_1,\dots,A_k]vHw[B_1,\dots,B_{\ell}]$, for some tree $H$. Each subtree of $T$ has to belong to one of the following types:

\begin{itemize}
\item Subtrees which contain neither $v$ nor $w$. Each subtree of this type is either in $H-u-v$ or in one of the $A_i$s or $B_i$s. Thus, there are  $\eta(H-v-w)+\sum_{i=1}^k\eta(A_i)+\sum_{i=1}^{\ell}\eta(B_i)$ of them.
\item Subtrees which contain $w$, but not $v$. A subtree of this type consists of a part in $H-v$ and another part in $[B_1,\dots,B_{\ell}]$, each containing $w$.  Their number is 
\[\eta(H-v,w)\prod_{i=1}^{\ell}\left(1+\eta(B_i,\rr(B_i))\right).\]
\item Subtrees which contain $v$, but not $w$. By similar reasoning as in the second case, the number of these subtrees is
\[\eta(H-w,v)\prod_{i=1}^k\left(1+\eta(A_i,\rr(A_i))\right).\]
\item Subtrees which contain both $v$ and $w$. This corresponds to subtrees of $H$ which contain both $v$ and $w$, to which induced subtrees of the $A_i$s and $B_i$s containing the respective roots can be attached. Thus, there are
\[\eta(H,v,w)\prod_{i=1}^{\ell}\left(1+\eta(B_i,\rr(B_i))\right)\prod_{i=1}^k\left(1+\eta(A_i,\rr(A_i))\right)\]
of them.
\end{itemize}

In total we have

\begin{align*}
\eta(T)
&=\alpha_T+\beta_T,
\end{align*}
where
\begin{align*}
\alpha_T&= \eta(H-v,w)\prod_{i=1}^{\ell}\left(1+\eta(B_i,\rr(B_i))\right)+\eta(H-w,v)\prod_{i=1}^k\left(1+\eta(A_i,\rr(A_i))\right),\\
\beta_T&=\eta(H-v-w)+\sum_{i=1}^k\eta(A_i)+\sum_{i=1}^{\ell}\eta(B_i)\\
&\quad+\eta(H,v,w)\prod_{i=1}^{\ell}\left(1+\eta(B_i,\rr(B_i))\right)\prod_{i=1}^k\left(1+\eta(A_i,\rr(A_i))\right).
\end{align*}
As in the proof of Lemma~\ref{cha5.wie6}, this must be maximal under all permutations of the $A_i$s and $B_i$s, possibly also switching the degrees of $v$ and $w$. Note here that $\beta_T$ is not affected by any of these rearrangements. So the maximality of $\eta(T)$ depends only on $\alpha_T$.

Since all the quantities involved are positive and the factors $1 + \eta(A_i,\rr(A_i))$ and $1 + \eta(B_i,\rr(B_i))$ are even greater than $1$, Lemma~\ref{Lem:RearrIneq} applies.
Thus we obtain the maximum of $\alpha_T$ if either $\eta(H-w,v) \geq \eta(H-v,w)$, $k \geq \ell$ and 
\[\min\{\eta(A_1,\rr(A_1)),\dots,\eta(A_{k},\rr(A_k))\} \geq \max\{\eta(B_1,\rr(B_1)),\dots,\eta(B_{\ell},\rr(B_{\ell}))\},\]

or $\eta(H-w,v) \leq \eta(H-v,w)$, $k \leq \ell$ and
\[\max\{\eta(A_1,\rr(A_1)),\dots, \eta(A_k,\rr(A_{k}))\} \leq \min\{\eta(B_1,\rr(B_1)),\dots,\eta(B_{\ell},\rr(B_{\ell}))\}.\]
\end{proof}

So we have established $\rho$-exchange-extremality with respect to $\rho(T)= \eta(T,\rr(T))$, the number of root-containing subtrees. Moreover, if $T$ can be decomposed as $[T_1,\dots,T_k]$, then
\begin{equation}
\eta(T,\rr(T))=\prod_{i=1}^k\left(1+\eta(T_i,\rr(T_i))\right), \label{cha5.sub}
\end{equation}
which is increasing in all of its variables and under addition of further variables, so~\ref{cond1} and~\ref{cond2} hold. Moreover, the minimum of $\eta(T,\rr(T))$, which is trivially equal to 1, is only reached if $T=\bullet$. Thus,~\ref{cond3} holds as well, and Theorem \ref{cha5.thm} implies the following theorem, which was already established in~\cite{Andriantiana1,Zhang1}.

\begin{thm}[\cite{Andriantiana1,Zhang1}]
Given a degree sequence, the corresponding greedy tree is the unique tree that maximises the number of subtrees.
\end{thm}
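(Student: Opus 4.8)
The plan is to assemble the three ingredients that have just been developed: Lemma~\ref{lemsub}, the recursion~\eqref{cha5.sub}, and the general Theorem~\ref{cha5.thm}. First I would set $\rho(T) = \eta(T,\rr(T))$, the number of subtrees of $T$ that contain its root, and observe that this $\rho$ meets the hypotheses \ref{cond1}--\ref{cond3} of Theorem~\ref{cha5.thm}. Condition~\ref{cond1} is exactly the recursion~\eqref{cha5.sub}, with recurrence rule $f_\rho(x_1,\dots,x_k) = \prod_{i=1}^k (1+x_i)$; this function is symmetric, so \ref{cond1} holds. Condition~\ref{cond2} holds because each factor $1+x_i$ is strictly increasing in $x_i$, and multiplying in an additional factor $1+x_{k+1}>1$ strictly increases the product (all the $x_i$ being positive). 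For condition~\ref{cond3}, note $\rho(\bullet)=1$ since the only root-containing subtree of a single vertex is the vertex itself, whereas if $|V(B)|>1$ then $B$ has at least one non-root vertex $u$, and the root together with $u$ spans a subtree containing the root, so $\rho(B)\geq 2 > 1 = \rho(\bullet)$.

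Next I would invoke Lemma~\ref{lemsub}: any tree $T$ with degree sequence $D$ that maximises the number of subtrees $\eta(T)$ is $\rho$-exchange-extremal for precisely this $\rho$. Indeed, the conclusion of Lemma~\ref{lemsub} is stated in exactly the form required by Definition~\ref{cha5.defn1}, reading $\eta(A_i,\rr(A_i)) = \rho(A_i)$ and $\eta(B_i,\rr(B_i)) = \rho(B_i)$. (One should mention that the decomposition $[L_1,\dots,L_k]vHw[R_1,\dots,R_\ell]$ in the definition corresponds to taking $A = [L_1,\dots,L_k]$ and $B = [R_1,\dots,R_\ell]$ as the two complete branches rooted at $v$ and $w$; the quantifier ``whenever we can decompose'' matches ``for any two disjoint complete branches''.) Then Theorem~\ref{cha5.thm} applies directly and forces $T = \G(D)$.

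Finally, uniqueness needs a word. Theorem~\ref{cha5.thm} already asserts that a $\rho$-exchange-extremal tree \emph{is} a greedy tree, and since the greedy tree $\G(D)$ is determined up to isomorphism by $D$, the extremal tree is unique. So the proof is essentially three lines: verify \ref{cond1}--\ref{cond3}, quote Lemma~\ref{lemsub}, quote Theorem~\ref{cha5.thm}. The only point requiring any care — and the place I would expect a careful referee to look — is the strictness in \ref{cond2} and the matching of the ``addition of further variables'' clause: one must check that $f_\rho$ genuinely increases when a new branch is appended, which relies on every $1+\eta(T_i,\rr(T_i))$ being strictly greater than $1$, i.e. on the branches being nonempty rooted trees. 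Since branches in a decomposition always have at least their root, this is automatic, so there is no real obstacle; the theorem is an immediate corollary.
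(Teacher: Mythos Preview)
Your proposal is correct and follows essentially the same route as the paper: verify conditions \ref{cond1}--\ref{cond3} for $\rho(T)=\eta(T,\rr(T))$ via the recursion~\eqref{cha5.sub}, invoke Lemma~\ref{lemsub} to obtain $\rho$-exchange-extremality of any maximiser, and then apply Theorem~\ref{cha5.thm}. The paper presents exactly this argument in the paragraph preceding the theorem, so there is nothing to add.
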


\subsection{Rooted spanning forests, incidence energy, Laplacian-energy-like invariant}\label{subsec:rsf}

Let $T$ be an $n$-vertex tree and $A(T)$ its adjacency matrix. The \textit{Laplacian matrix} of $T$ is $L(T)=A(T)-D(T)$, where $D(T)$ is the diagonal matrix whose diagonal entry $d_{ii}$ is the degree of the $i$-th vertex $v_i$. The \textit{Laplacian characteristic polynomial} $\Li(T,x)$ is the characteristic polynomial of the Laplacian matrix. In 1967, Kel'mans \cite{kel} gave a combinatorial interpretation for the coefficients of the Laplacian characteristic polynomial as follows. 

\begin{thm}\label{thm:kelmans}
If $\Li(T,x)=\det(xI_n-L(T))=\sum_{k=1}^n (-1)^{n-k}c_k(T)x^k$, then
\begin{align*}
c_k(T)=\sum_{F \in \F(T,k)}\gamma(F),
\end{align*}

where $\F(T,k)$ is the set of all spanning forests of the tree $T$ containing exactly $k$ components and $\gamma(F)$ is the product of the number of vertices in each component of $F$.
\end{thm}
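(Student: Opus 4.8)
The plan is to combine two classical ingredients: the expression of the coefficients of a characteristic polynomial in terms of principal minors, and the all-minors version of the Matrix--Tree theorem.

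\medskip

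First I would recall the general fact that for any $n\times n$ matrix $M$, the coefficient of $x^k$ in $\det(xI_n-M)$ equals $(-1)^{n-k}$ times the sum of all $(n-k)\times(n-k)$ principal minors of $M$. Applying this with $M=L(T)$ and comparing with the given expansion $\Li(T,x)=\sum_{k}(-1)^{n-k}c_k(T)x^k$ yields
\[
c_k(T)=\sum_{\substack{S\subseteq V(T)\\ |S|=k}}\det L(T)[\widehat{S}],
\]
where $L(T)[\widehat{S}]$ denotes the principal submatrix of $L(T)$ obtained by deleting the rows and columns indexed by $S$ (the sign bookkeeping works out precisely because the statement already carries the factor $(-1)^{n-k}$).

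\medskip

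Second, I would invoke the all-minors Matrix--Tree theorem to interpret each term $\det L(T)[\widehat{S}]$ combinatorially: for a tree $T$ and $S\subseteq V(T)$ with $|S|=k$, this principal minor equals the number of spanning forests $F$ of $T$ with exactly $k$ components in which each component contains exactly one vertex of $S$. The quickest self-contained route is to factor $L(T)=\pm N N^{\mathsf T}$ for an oriented incidence matrix $N$, delete the rows indexed by $S$ to obtain $N_{\widehat{S}}$, expand $\det\!\big(N_{\widehat{S}}N_{\widehat{S}}^{\mathsf T}\big)$ by the Cauchy--Binet formula, and use the standard fact that a maximal square submatrix of $N_{\widehat{S}}$ (one column per edge of a chosen set $E'$ with $|E'|=n-k$) has determinant $\pm 1$ when $E'$ spans a forest distributing exactly one vertex of $S$ to each of its trees, and $0$ otherwise. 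Here one gets a genuine simplification because $T$ itself has only $n-1$ edges: every $(n-k)$-subset of $E(T)$ already spans a forest with exactly $k$ components, so (since $|S|=k$) the only remaining condition is that each component meets $S$ in exactly one vertex.

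\medskip

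Finally I would swap the order of summation. For a spanning forest $F\in\F(T,k)$ with components $C_1,\dots,C_k$ of orders $n_1,\dots,n_k$, the number of sets $S$ with $|S|=k$ that pick one vertex from each $C_i$ is exactly $n_1 n_2\cdots n_k=\gamma(F)$. Therefore
\[
c_k(T)=\sum_{|S|=k}\#\{F\in\F(T,k):\ F\text{ meets }S\text{ in one vertex per component}\}=\sum_{F\in\F(T,k)}\gamma(F),
\]
as claimed. The only non-routine step is the all-minors Matrix--Tree theorem; it may be cited from the literature or derived in a few lines from Cauchy--Binet as sketched above, while the rest of the argument is pure bookkeeping.
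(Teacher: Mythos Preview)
Your argument is correct and follows the standard route to Kel'mans' formula: express the coefficients of $\det(xI_n-L(T))$ as sums of principal minors, interpret each principal minor via the all-minors Matrix--Tree theorem (or equivalently via Cauchy--Binet applied to the incidence factorisation), and then swap the order of summation over root-sets $S$ and spanning forests $F$. The simplification you note for trees---that every $(n-k)$-subset of $E(T)$ is automatically a forest with $k$ components---is accurate and harmless, though the same proof works verbatim for arbitrary graphs.

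However, there is nothing to compare against: the paper does not give its own proof of this statement. Theorem~\ref{thm:kelmans} is quoted as a classical result of Kel'mans (1967) and used as a black box to motivate the polynomial $\rf(T,x)$. So your proposal is not an alternative to the paper's proof but rather a self-contained justification of a cited result; as such it is entirely satisfactory.
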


The quantity $\gamma(F)$ can be interpreted as the number of ways to assign roots to the components of a forest $F$, and therefore $c_k(T)$ is the number of $k$-rooted spanning forests of $T$ (spanning forests with $k$ components, where each component is rooted at one of its vertices). 

Let us  consider the polynomial in which we associate to every rooted spanning forest $F$ a weight $x^{\lambda(F)}$, where $\lambda(F)$ is the number of components of $F$. Replacing $x$ by $-x$ in Theorem~\ref{thm:kelmans}, we see that this polynomial is connected to the Laplacian matrix as follows:

\begin{equation}
\rf(T,x)=\sum_{k=1}^n c_k(T)x^k=\det(L(T)+xI_n).\label{cheb}
\end{equation}

Note that the total number of rooted spanning forests in a tree $T$ is equal to $\rf(T,1)$. To avoid confusion as we also consider rooted spanning forests within rooted trees, we will refer to ``marked" spanning forests rather than rooted spanning forests in the following, and call the components' roots ``markers". We show that the invariant $\rf(\cdot,x)$ fits our general scheme, thereby generalising the approach taken in~\cite{Jin}, where trees with given maximum degree are considered.

We define an auxiliary quantity for rooted trees $T$, which is denoted $\f(T,x)$. It counts marked spanning forests $F$ weighted with $x^{\gamma(F)-1}$, in which the root of $T$ is also a marker of one of the forest's components. 
Note that $\f(T,x)$ also counts (weighted) spanning forests of $T$ where all components, except the one containing the root of $T$, have a marker. Finally, we set
\begin{align*}
\rho_1(T,x)=\frac{\rf(T,x)}{\rf(T,x)+\f(T,x)}.
\end{align*}
\begin{lem}
Let $T=[A_1,\dots,A_k]vHw[B_1,\dots,B_{\ell}]$ for some tree $H$, and let $x>0$. Suppose that $\rf(T,x) \leq \rf(T',x)$ for every tree $T'$ with the same degree sequence as $T$. Then 
\begin{itemize}
\item either $k \geq \ell$ and 
\[\min\{\rho_1(A_1,x),\dots,\rho_1(A_k,x)\} \geq \max\{\rho_1(B_1,x),\dots,\rho_1(B_{\ell},x)\},\]

\item or $k \leq \ell$ and
\[\max\{\rho_1(A_1,x),\dots, \rho_1(A_{k},x)\} \leq \min\{\rho_1(B_1,x),\dots,\rho_1(B_{\ell},x)\}.\]
\end{itemize}
\end{lem}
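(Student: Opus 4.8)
The plan is to mimic the structure of the proofs of Lemma~\ref{cha5.wie6} and Lemma~\ref{lemsub}: decompose $\rf(T,x)$ according to the way a marked spanning forest interacts with the two distinguished leaves $v$ and $w$, isolate the part that is sensitive to rearrangements of the $A_i$s and $B_i$s, and then apply a convexity/rearrangement argument. First I would set up a classification of the marked spanning forests of $T = [A_1,\dots,A_k]vHw[B_1,\dots,B_\ell]$ according to whether $v$ and $w$ carry markers, and more precisely according to whether the component of $v$ (resp.\ $w$) in the forest is ``cut off'' from $H$ or extends into $H$. The key building blocks will be $\f(A_i,x)$ and $\rf(A_i,x)$: when the edge joining $A_i$'s root to $v$ is deleted, $A_i$ contributes a fully marked forest, weighted by $\rf(A_i,x)$; when it is retained, $A_i$'s root-component merges with $v$'s component, and $A_i$ contributes $\f(A_i,x)$ (its root need not carry its own marker). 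Summing over the two choices per branch, the total contribution of branch $A_i$, conditioned on whether $v$ ends up being a marker or not, is governed by $\rf(A_i,x)+\f(A_i,x)$ in the ``$v$ is a marker'' case and by $\f(A_i,x)$ alone in the ``$v$ is not a marker but its component stays within the $A_i$'s and $v$'' case — and similarly for the $B_i$s and $w$.

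Carrying this out, I expect $\rf(T,x)$ to split as $\rf(T,x) = \beta_T + \alpha_T$, where $\beta_T$ collects all terms invariant under permutations of $A_1,\dots,A_k,B_1,\dots,B_\ell$ (and under swapping the two groups together with the degrees of $v$ and $w$), and $\alpha_T$ is a sum of a bounded number of terms each of which is a constant (depending only on $H$ and $x$) times a product of the form $\prod (\rf(A_i,x)+\f(A_i,x))$ over one group and $\prod(\rf(B_i,x)+\f(B_i,x))$ over the other, possibly with one group replaced by a product of $\f$'s. The crucial normalisation is that $\rho_1(A_i,x) = \rf(A_i,x)/(\rf(A_i,x)+\f(A_i,x))$, so that $\f(A_i,x) = (\rf(A_i,x)+\f(A_i,x))(1-\rho_1(A_i,x))$ and the ``$\f$-only'' products can be written as $\prod(\rf(A_i,x)+\f(A_i,x)) \cdot \prod(1-\rho_1(A_i,x))$. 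Pulling out the common factor $\prod_i (\rf(A_i,x)+\f(A_i,x)) \prod_i(\rf(B_i,x)+\f(B_i,x))$ — which is itself invariant under all the rearrangements — what remains inside $\alpha_T$ is a small expression in the quantities $\rho_1(A_i,x)$ and $\rho_1(B_i,x)$ (all lying in $(0,1)$), of a shape to which a rearrangement inequality in the spirit of Lemma~\ref{Lem:RearrIneq} can be applied; then minimality of $\rf(T,x)$ forces the largest $\rho_1$-values of the branches to be gathered together on one side, which is exactly the claimed dichotomy (with the inequality between $\eta(H-w,v)$ and $\eta(H-v,w)$ in Lemma~\ref{lemsub} now replaced by the analogous comparison between the $H$-side weights governing whether $v$ or $w$ is ``favoured'' as a marker).

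The main obstacle, and the step I would spend the most care on, is getting the decomposition $\rf(T,x)=\beta_T+\alpha_T$ exactly right: there are more cases than in the subtree count because the component containing the root can interact with $H$ in several ways (the $v$-component and the $w$-component may or may not be the same component, it may or may not contain a marker elsewhere, etc.), and one must check that every term that depends on the arrangement of the branches does factor through the quantities $\rho_1(A_i,x)$, $\rho_1(B_i,x)$ after the common factor is extracted, while everything else is genuinely invariant. A secondary subtlety is that the resulting expression in the $\rho_1$-values may not be literally of the form covered by Lemma~\ref{Lem:RearrIneq} (which concerns $a\prod x_i + b\prod x_i$ with $x_i\ge 1$); since here the relevant variables lie in $(0,1)$ one may instead need to invoke convexity directly (as in the proof of Lemma~\ref{cha5.wie6}), or reformulate in terms of $1/\rho_1$ or $1-\rho_1$ so that the hypotheses of the rearrangement lemma are met. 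Once the sensitive part is in the correct product form, the convexity argument is routine and yields the dichotomy, together with the correspondence between ordering by $\rho_1$ and the required min/max conditions.
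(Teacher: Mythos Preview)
Your broad strategy --- decompose $\rf(T,x)$ by cases, factor out the invariant product $\prod_i(\rf(A_i,x)+\f(A_i,x))\prod_j(\rf(B_j,x)+\f(B_j,x))$, and then optimise over rearrangements --- is exactly the right plan, and you are right to suspect that concavity in the style of Lemma~\ref{cha5.wie6} may be what finishes it. But your prediction of the residual shape is wrong, and your case analysis as written would not produce the correct expression. You anticipate products of $\f$'s, hence products of $(1-\rho_1(A_i,x))$, by analogy with Lemma~\ref{lemsub}; what actually appears after factoring are \emph{sums} $\sum_i \rho_1(A_i,x)$ and $\sum_j \rho_1(B_j,x)$. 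The reason is that the pertinent dichotomy is not ``is $v$ itself a marker?'' but ``does the component containing $v$ carry its marker inside $H$ or outside $H$?''. When the marker lies outside $H$, it sits in \emph{exactly one} of the $A_i$'s (or $B_j$'s, when $v$ and $w$ share a component), and that single distinguished branch contributes $\rf(A_i,x)$ while every other branch still contributes $\rf+\f$; summing over the choice of $i$ and dividing by the big product yields $\sum_i \rho_1(A_i,x)$, not $\prod_i(1-\rho_1(A_i,x))$. In particular, your claim that in the ``$v$ is not a marker, component stays on the $A$-side'' case each $A_i$ contributes $\f(A_i,x)$ alone is incorrect: the branch hosting the marker contributes $\rf$, not $\f$.

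Carrying out the correct four-way split (according to whether the components of $v$ and of $w$ have their markers in $H$; the sub-case where both markers are outside $H$ further splits by whether $v$ and $w$ lie in the same component) gives, after factoring,
\[
a + (b+d_1)\,y + (b+d_2)\,z + c\,yz,
\]
where $y=\sum_i \rho_1(A_i,x)$, $z=\sum_j \rho_1(B_j,x)$, and $a,b,c,d_1,d_2$ are positive quantities depending only on $H$ and $x$. Since $y+z$ is invariant under all the allowed rearrangements, this is a strictly concave quadratic in $y$, and minimality forces $y$ to be extremal --- precisely the mechanism of Lemma~\ref{cha5.wie6}, not the product rearrangement of Lemma~\ref{Lem:RearrIneq}. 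So the correct template here is the Wiener-index lemma rather than the subtree lemma.
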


\begin{proof}
Let $A=[A_1,\dots,A_k]$ and $B=[B_1,\dots,B_{\ell}]$. To obtain an expression for $\rf(T,x)$, we consider the following cases for a marked spanning forest of $T$:
\begin{itemize}
\item $\rr(A)$ and $\rr(B)$ belong to components that have a marker in $H$ (possibly the same component). The spanning forests induced in the $A_i$s and $B_i$s are either fully marked or marked except for the root's component (which is joined to the component of $\rr(A)$ or $\rr(B)$ in $H$). Let $c_{11}(H,r)$ be the number of $r$-component marked spanning forests of $H$, and set $a=\sum_{r \geq 1} c_{11}(H,r)x^r$. The contribution of this case to $\rf(T,x)$ is

\begin{equation*}
a\cdot \prod_{i=1}^k(\rf(A_i,x)+\f(A_i,x))\prod_{j=1}^{\ell}(\rf(B_j,x)+\f(B_j,x)).
\end{equation*}

\item $\rr(A)$ and $\rr(B)$ belong to the same component, but this component does not have a marker in $H$. In this case, the marker of the component that contains $\rr(A)$ and $\rr(B)$ lies in one of the $A_i$s or $B_j$s. So we have to choose exactly one of them and replace the factor $\rf(.,x)+\f(.,x)$ by $\rf(.,x)$. Let $c_{00}(H,r)$ be the number of $r$-component spanning forests of $H$, where $\rr(A)$ and $\rr(B)$ lie in the same component and all components except the one containing those are marked. Set $b=\sum_{r \geq 1} c_{00}(H,r)x^{r-1}$. This gives a contribution of

\begin{align*}
b&\cdot \prod_{i=1}^k(\rf(A_i,x)+\f(A_i,x))\prod_{j=1}^{\ell}(\rf(B_j,x)+\f(B_j,x)) \\
&\left(\sum_{i=1}^k\frac{\rf(A_i,x)}{\rf(A_i,x)+\f(A_i,x)}+\sum_{j=1}^{\ell}\frac{\rf(B_j,x)}{\rf(B_j,x)+\f(B_j,x)}\right).
\end{align*} 

\item $\rr(A)$ and $\rr(B)$ lie in different components, both vertices have markers outside of $H$. Now, one of the $A_i$s has to contain the marker of the component of $\rr(A)$, and one of the $B_j$s the marker of the component of $\rr(B)$. Let $c_{00}'(H,r)$ be the number of $r$-component spanning forests of $H$ where $\rr(A)$ and $\rr(B)$ belong to different components and all but those two components are marked. Set $c=\sum_{r \geq 2} c_{00}'(H,r)x^{r-2}$. For this case we get 

\begin{align*}
c&\cdot \prod_{i=1}^k(\rf(A_i,x)+\f(A_i,x))\prod_{j=1}^{\ell}(\rf(B_j,x)+\f(B_j,x)) \\
&\left(\sum_{i=1}^k\frac{\rf(A_i,x)}{\rf(A_i,x)+\f(A_i)}\right)\left(\sum_{j=1}^{\ell}\frac{\rf(B_j,x)}{\rf(B_j,x)+\f(B_j,x)}\right).
\end{align*}

\item $\rr(A)$ and $\rr(B)$ lie in different components, one has a marker in $H$, the other does not. Let $c_{10}(H,r)$ be the number of $r$-component spanning forests of $H$ such that $\rr(A)$ and $\rr(B)$ lie in different components and all but the one containing $\rr(B)$ are marked. Define $c_{01}(H,r)$ analogously, with the roles of $\rr(A)$ and $\rr(B)$ reversed. Now set $d_1=\sum_{r \geq 2} c_{10}(H,r)x^{r-1}$ and $d_2=\sum_{r \geq 2} c_{01}(H,r)x^{r-1}$. Using a similar reasoning as before, we get a contribution of

\begin{align*}
&d_1\cdot\prod_{i=1}^k(\rf(A_i,x)+\f(A_i,x))\prod_{j=1}^{\ell}(\rf(B_j,x)+\f(B_j,x))\cdot\sum_{i=1}^k\frac{\rf(A_i,x)}{\rf(A_i,x)+\f(A_i,x)}  \\&
+d_2\cdot\prod_{i=1}^k(\rf(A_i,x)+\f(A_i,x))\prod_{j=1}^{\ell}(\rf(B_j,x)+\f(B_j,x))\cdot\sum_{j=1}^{\ell}\frac{\rf(B_j,x)}{\rf(B_j,x)+\f(B_j,x)}.
\end{align*}
 
\end{itemize}
Hence, we finally obtain
\begin{align*}
\rf(T,x)=&\prod_{i=1}^k(\rf(A_i,x)+\f(A_i,x))\prod_{j=1}^{\ell}(\rf(B_j,x)+\f(B_j,x))\\
&\Big[a+(b+d_1)\sum_{i=1}^k\frac{\rf(A_i,x)}{\rf(A_i,x)+\f(A_i,x)}+(b+d_2)\sum_{j=1}^{\ell}\frac{\rf(B_j,x)}{\rf(B_j,x)+\f(B_j,x)}\\
&+c\left(\sum_{i=1}^k\frac{\rf(A_i,x)}{\rf(A_i,x)+\f(A_i)}\right)\left(\sum_{j=1}^{\ell}\frac{\rf(B_j,x)}{\rf(B_j,x)+\f(B_j,x)}\right)\Big].
\end{align*}

The product $\prod_{i=1}^k(\rf(A_i,x)+\f(A_i,x))\prod_{j=1}^{\ell}(\rf(B_j,x)+\f(B_j,x))$ remains constant when the $A_i$s and $B_j$s are rearranged, as does the sum

\begin{align*}
\Sigma &=\sum_{i=1}^k\frac{\rf(A_i,x)}{\rf(A_i,x)+\f(A_i,x)}+\sum_{j=1}^{\ell}\frac{\rf(B_j,x)}{\rf(B_j,x)+\f(B_j,x)}\\
&=\sum_{i=1}^k \rho_1(A_i,x)+ \sum_{j=1}^{\ell}\rho_1(B_j,x).
\end{align*}
We can now argue exactly as in the proof of Lemma~\ref{cha5.wie6}. Write $y=\sum_{i=1}^k\rho_1(A_i,x)$. For $\rf(T,x)$ to attain its minimum under all possible permutations of $A_i$s and $B_i$s, the function 
\begin{align*}
y \mapsto a+(b+d_1)y+(b+d_2)(\Sigma-y)+cy(\Sigma-y),
\end{align*}
which is strictly concave in $y$, has to attain its minimum. This occurs when $y$ is either as large or as small as possible. That is,
\begin{itemize}
\item $k \geq \ell$ and $\min\{\rho_1(A_1,x),\dots,\rho_1(A_k,x)\} \geq \max\{\rho_1(B_1,x),\dots,\rho_1(B_{\ell},x)\}$, or 
\item $k \leq \ell$ and $\max\{\rho_1(A_1,x),\dots, \rho_1(A_{k},x)\}\leq \min\{\rho_1(B_1,x),\dots,\rho_1(B_{\ell},x)\}.$
\end{itemize}
\end{proof}

We have established now that minimality with respect to $\rf(\cdot,x)$ implies $\rho_1$-exchange-extremality.
Moreover, the quantity $\rho_1$ can be determined recursively as follows. If $T$ can be decomposed as $[T_1,\dots,T_k]$,  then
\[\rf(T,x)=\prod_{i=1}^k (\rf(T_i,x)+\f(T_i,x))\left(x+\sum_{i=1}^k\frac{\rf(T_i,x)}{\rf(T_i,x)+\f(T_i,x)}\right),\]
and
\[\f(T,x)=\prod_{i=1}^k(\rf(T_i,x)+\f(T_i,x))\]
using similar arguments as before. Thus,
\begin{align}
\rho_1(T,x)=&\frac{x+\sum_{i=1}^k\frac{\rf(T_i,x)}{\rf(T_i,x)+\f(T_i,x)}}{1+x+\sum_{i=1}^k\frac{\rf(T_i,x)}{\rf(T_i,x)+\f(T_i,x)}}
=\frac{x+\sum_{i=1}^k \rho_1(T_i,x)}{1+x+\sum_{i=1}^k \rho_1(T_i,x)}.\label{cha5.eqrow}
\end{align}
The recurrence rule $f_{\rho_1}$ corresponding to \eqref{cha5.eqrow} is increasing in all of its variables and under addition of further variables, moreover $\rho_1(\bullet,x)=\frac{x}{1+x}$ is the unique minimum. So, we may use Theorem \ref{cha5.thm} to obtain the following result.

\begin{thm}\label{cha5.thmro}
For every tree $T$ with degree sequence $D$ and every $x>0$,
\[\rf(T,x)\geq \rf(\G(D),x),\]
with equality if and only if $T$ is isomorphic to $\G(D)$. In particular, $\G(D)$ has the smallest total number of marked spanning forests among trees with degree sequence $D$.
\end{thm}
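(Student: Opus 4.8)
The plan is to deduce the statement directly from Theorem~\ref{cha5.thm} together with the lemma just proved. First I would fix $x>0$ and choose, among the finitely many trees with degree sequence $D$ (up to isomorphism), one tree $T^{*}$ at which $\rf(\cdot,x)$ is minimal. The lemma above then tells us that $T^{*}$ is $\rho_1$-exchange-extremal, where $\rho_1(T,x)=\rf(T,x)/(\rf(T,x)+\f(T,x))$.

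Next I would verify that $\rho_1(\cdot,x)$ meets the hypotheses \ref{cond1}--\ref{cond3} of Theorem~\ref{cha5.thm}. Condition~\ref{cond1} is precisely the recurrence~\eqref{cha5.eqrow}, whose rule $f_{\rho_1}(t_1,\dots,t_k)=(x+t_1+\cdots+t_k)/(1+x+t_1+\cdots+t_k)$ is visibly symmetric. Condition~\ref{cond2} follows since $t\mapsto(x+t)/(1+x+t)$ is strictly increasing, so $f_{\rho_1}$ is strictly increasing in each variable and strictly increasing under adjoining a further variable (which always takes a positive value). For condition~\ref{cond3} one observes that $\rho_1(\bullet,x)=x/(1+x)$ and that, for $x>0$, any tree $T$ with $|V(T)|>1$ has at least one branch $T_i$ with $\rho_1(T_i,x)\ge x/(1+x)>0$; feeding this into~\eqref{cha5.eqrow} and using the strict monotonicity just noted forces $\rho_1(T,x)>x/(1+x)$. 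Hence Theorem~\ref{cha5.thm} applies to $T^{*}$ and yields $T^{*}\cong\G(D)$.

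From here the conclusion is immediate. Since $T^{*}$ was an arbitrary minimiser and it equals $\G(D)$, we get $\rf(T,x)\ge\rf(\G(D),x)$ for every tree $T$ with degree sequence $D$; and if equality holds for some $T$, then $T$ is itself a minimiser of $\rf(\cdot,x)$, so the same argument gives $T\cong\G(D)$, which establishes the equality case. As $x>0$ was arbitrary and $\G(D)$ does not depend on $x$, the inequality holds for all $x>0$ simultaneously. The ``in particular'' assertion then follows by specialising to $x=1$ and recalling from~\eqref{cheb} that $\rf(T,1)=\sum_{k=1}^{n}c_k(T)$ is exactly the total number of marked spanning forests of $T$.

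Because so much of the machinery is already in place, I do not expect a serious obstacle: the proof reduces to a routine bookkeeping check. The only steps that merit attention are the verification of~\ref{cond3} — which genuinely uses $x>0$ and the monotonicity of the recurrence rule — and the remark that the conclusion is uniform in $x$, the latter being automatic since the greedy tree is defined without any reference to $x$.
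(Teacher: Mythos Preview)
Your proposal is correct and follows exactly the route the paper takes: the paper verifies that the recurrence rule~\eqref{cha5.eqrow} is symmetric and increasing, notes that $\rho_1(\bullet,x)=x/(1+x)$ is the unique minimum, and then simply invokes Theorem~\ref{cha5.thm} to conclude. Your write-up just spells out a few details (the minimiser argument, the explicit verification of~\ref{cond3}, and the $x=1$ specialisation) that the paper leaves implicit.
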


Now, let us consider other quantities related to the polynomial $\rf(T,x)$.

\begin{defn}
A \textit{subdivision graph}, denoted $S(G)$, is a graph obtained by inserting a new vertex of degree 2 on each edge of $G$.
\end{defn}

\begin{lem}[\cite{Zhou}]\label{lemzhou}
Let $T$ be a tree of order $n$ and $S(T)$ its corresponding subdivision graph. Then
\begin{align*}
c_k(T)=\m(S(T),k), \quad k=0,\dots,n,
\end{align*}
where $\m(S(T),k)$ is the number of $k$-matchings of $S(T)$.
\end{lem}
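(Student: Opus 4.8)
The plan is to establish the identity by a weight-preserving correspondence between the rooted spanning forests counted by $c_k(T)$ and the $k$-matchings of $S(T)$, relying on the interpretation of $c_k(T)$ provided by Theorem~\ref{thm:kelmans}. Recall that $S(T)$ consists of the original vertices $V(T)$ together with one subdivision vertex $x_e$ for each edge $e=uv$ of $T$, the two edges of $S(T)$ at $x_e$ being $ux_e$ and $x_ev$. Since subdividing the edges of a tree again yields a tree, $S(T)$ is itself a forest, and in any matching of $S(T)$ each subdivision vertex $x_e$ is either unmatched or matched to exactly one of its two neighbours $u,v$.

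First I would unpack $c_k(T)=\sum_{F\in\F(T,k)}\gamma(F)$: since $\gamma(F)$ is the product of the component sizes of $F$, it equals the number of ways to mark one vertex (a ``root'') in each component of $F$, so $c_k(T)$ counts the spanning forests of $T$ with a chosen root in every component. The core of the argument is a local rule on a single rooted component $C$ with root $r$: orient each edge of $C$ away from $r$ and match the subdivision vertex $x_e$ of every edge $e$ to the head of the oriented edge. This yields a matching of the subdivision of $C$, and conversely any matching of $S(C)$ that saturates all subdivision vertices leaves exactly one original vertex free, which we take as the root; the two operations are mutually inverse. Hence root-choices in $C$ are in bijection with such matchings, their common number being $|V(C)|$. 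Assembling these correspondences across the components of $F$ turns $\gamma(F)=\prod_C|V(C)|$ into the number of matchings of $S(T)$ associated with $F$, and distinct forests produce disjoint families of matchings; summing $\gamma(F)$ over $\F(T,k)$ thus computes the total number of matchings that arise, which the lemma identifies with $\m(S(T),k)$.

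The step I expect to be the main obstacle is the index bookkeeping that aligns the cardinality of the matching with the parameter $k$: one must track how the matched subdivision vertices, the deleted tree-edges, and the resulting components combine, and verify that this correspondence is exactly a bijection onto the $k$-matchings of $S(T)$ (surjectivity and disjointness included). I would calibrate this on a small instance such as a path to be sure the indices are set correctly. A cleaner but less self-contained alternative is spectral: $S(T)$ is a forest, so the characteristic polynomial of its adjacency matrix coincides with its matching polynomial, while the block form $A(S(T))=\begin{pmatrix}0&B\\ B^{\top}&0\end{pmatrix}$ through the incidence matrix $B$ of $T$ gives $\det(xI-A(S(T)))=x^{m-n}\det(x^{2}I-BB^{\top})$; as $BB^{\top}$ is the signless Laplacian and $T$ is bipartite, $\det(x^{2}I-BB^{\top})=\Li(T,x^{2})$, and comparing coefficients against \eqref{cheb} reproduces $c_k(T)=\m(S(T),k)$. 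The same index calibration is the delicate point in this route as well.
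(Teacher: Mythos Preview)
The paper does not prove this lemma; it is quoted from \cite{Zhou} without argument, so there is no ``paper's proof'' to compare against. Both of your routes---the explicit bijection between rooted spanning forests and matchings of $S(T)$, and the spectral computation via the incidence matrix---are standard and sound.

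Your instinct that the index bookkeeping is the crux is exactly right, and in fact carrying it out exposes a misprint in the lemma as stated. Under the conventions of Theorem~\ref{thm:kelmans} (where $c_k(T)$ counts rooted spanning forests with $k$ \emph{components}), your bijection sends such a forest to a matching of size $n-k$: the forest has $n-k$ edges, and for each of them the corresponding subdivision vertex is matched to the child endpoint, while the $k-1$ subdivision vertices over the deleted edges and the $k$ roots remain unsaturated. The spectral route gives the same shift: from $\det\bigl(xI-A(S(T))\bigr)=x^{-1}\Li(T,x^{2})$ one reads off $c_j(T)=\m(S(T),n-j)$ upon equating powers of $x$. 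The calibration you propose on a path already shows the discrepancy: for $T=P_2$ one has $c_1=2$, $c_2=1$, whereas $\m(P_3,1)=2$ and $\m(P_3,2)=0$.

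The correct identity (with the paper's indexing) is therefore $c_k(T)=\m(S(T),n-k)$, equivalently $\rf(T,x)=x^{\,n}\,\Ma\bigl(S(T),1/x\bigr)$ rather than $\rf(T,x)=\Ma(S(T),x)$. This does not disturb the downstream conclusions: the inequality $\rf(T,x)\geq \rf(\G(D),x)$ for every $x>0$ becomes, after substituting $x\mapsto 1/x$ and cancelling $x^n$, exactly $\Ma(S(T),x)\geq \Ma(S(\G(D)),x)$ for every $x>0$, so Corollary~\ref{cha5.coro} and the incidence-energy result remain valid.
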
  
Let $\Ma(T,x)=\sum_{k\geq 0}\m(T,k)x^k$ be the matching generating polynomial of $T$; then Lemma~\ref{lemzhou} implies that $\rf(T,x)=\Ma(S(T),x)$. Thus, we obtain the following corollary of Theorem \ref{cha5.thmro}:

\begin{cor}\label{cha5.coro}
For every tree $T$ with degree sequence $D$, and for every $x>0$, \[\Ma(S(T),x)\geq \Ma(S(\G(D)),x),\]
with equality if and only if $T$ is isomorphic to $\G(D)$.
\end{cor}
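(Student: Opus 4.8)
The plan is to read off this corollary directly from Theorem~\ref{cha5.thmro} by translating the invariant $\rf(\cdot,x)$ into a matching generating polynomial of a subdivision graph. First I would apply Lemma~\ref{lemzhou}: for any tree $T$ of order $n$ one has $c_k(T)=\m(S(T),k)$ for $k=0,\dots,n$. Comparing this term by term with the definitions $\rf(T,x)=\sum_k c_k(T)x^k$ and $\Ma(S(T),x)=\sum_{k\geq 0}\m(S(T),k)x^k$, I obtain the polynomial identity $\rf(T,x)=\Ma(S(T),x)$ already noted in the text; the only bookkeeping point is to confirm that the ranges of summation agree, and in any case a leftover constant term would appear identically on both sides of the final inequality and is therefore harmless. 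Since this is an identity of polynomials, it holds for every real $x$, in particular for every $x>0$.

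Next I would apply this identity to the two trees of interest, $T$ and $\G(D)$, both of which have the same order and degree sequence $D$, so that $S(T)$ and $S(\G(D))$ have the same number of vertices and the comparison below is between graphs of the same size. This gives $\Ma(S(T),x)=\rf(T,x)$ and $\Ma(S(\G(D)),x)=\rf(\G(D),x)$. Now Theorem~\ref{cha5.thmro} asserts that $\rf(T,x)\geq\rf(\G(D),x)$ for every $x>0$, with equality if and only if $T$ is isomorphic to $\G(D)$; substituting the two identities converts this statement directly into $\Ma(S(T),x)\geq\Ma(S(\G(D)),x)$ with the same equality case, which is exactly the claim.

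I do not expect any genuine obstacle here: all of the combinatorial and analytic content has already been used up in proving Theorem~\ref{cha5.thmro} (via the exchange lemma for $\rho_1$ and the main theorem of the section), and what remains is purely a change of language between the ``marked spanning forest'' model and the ``matching of $S(T)$'' model supplied by Lemma~\ref{lemzhou}. The only place that calls for a moment's care is the precise alignment of the two polynomials in the identity $\rf(T,x)=\Ma(S(T),x)$, which I would dispatch in a single sentence.
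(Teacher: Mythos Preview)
Your proposal is correct and follows exactly the paper's own argument: the paper notes just before the corollary that Lemma~\ref{lemzhou} gives the identity $\rf(T,x)=\Ma(S(T),x)$, and then states the corollary as an immediate consequence of Theorem~\ref{cha5.thmro}. Your extra care about the ranges of summation is fine but not needed, since both sides are polynomials and the identity follows coefficientwise.
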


For a tree $T$, let $\mu_1,\mu_2,\dots,\mu_n$ be the eigenvalues of $\Li(T)$. They are also the eigenvalues of the signless Laplacian matrix $\Li^+(T)$ (this is in fact true for every bipartite graph, see~\cite{cveto}). Recall that the energy of a graph is the sum of the absolute values of the eigenvalues of its adjacency matrix. A variant of the energy, known as the Laplacian-energy-like invariant ($\Lel$ for short, see~\cite{Liu}), is defined by 
\begin{equation*}
\Lel(T)=\sum_{i=1}^n \sqrt{\mu_i}.\label{cha5.inc1}
\end{equation*}
It is closely related to the incidence energy $\IE$ of a graph, defined in \cite{Jooy} as the sum of the singular values of its (vertex-edge) incidence matrix. For every tree $T$, one has $\Lel(T)=\IE(T)$.
Furthermore, it is known that (see \cite{Gutman2})
\begin{equation}
\label{Eq.lap.lemm3}
\Lel(T)=\IE(T)=\frac12 \En(S(T)),
\end{equation}
where $S(T)$ is the subdivision graph of $T$. This allows us to prove the following corollary.

\begin{cor}
Given a degree sequence of a tree, the incidence energy $\IE$, or equivalently the Laplacian-energy-like invariant $\Lel$, is minimised by the greedy tree.
\end{cor}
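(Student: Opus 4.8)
The plan is to reduce the statement to Corollary~\ref{cha5.coro} via the Coulson integral formula for the energy.

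First I would observe that the subdivision graph $S(T)$ of a tree $T$ is again a tree, so the Coulson formula \eqref{Eq:EnTree} applies to $S(T)$. Combining it with the identity $\Lel(T)=\IE(T)=\tfrac12\En(S(T))$ from \eqref{Eq.lap.lemm3} and the fact (a consequence of Lemma~\ref{lemzhou}) that $\sum_{k\ge 0}\m(S(T),k)x^{2k}=\Ma(S(T),x^2)=\rf(T,x^2)$, I obtain the representation
\[
\IE(T)=\Lel(T)=\frac{1}{\pi}\int_0^\infty\frac{dx}{x^2}\log\rf(T,x^2),
\]
which expresses both invariants purely in terms of the polynomial $\rf(T,\cdot)$.

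Next, for a fixed degree sequence $D$ and an arbitrary tree $T$ with that degree sequence, I would invoke Theorem~\ref{cha5.thmro} (equivalently Corollary~\ref{cha5.coro}): one has $\rf(T,y)\ge\rf(\G(D),y)$ for every $y>0$, with equality for all $y$ only when $T\cong\G(D)$. Since $\log$ is increasing and the weight $x^{-2}$ is positive, the integrand for $T$ dominates that for $\G(D)$ pointwise, and integrating gives $\IE(T)=\Lel(T)\ge\Lel(\G(D))=\IE(\G(D))$. For the uniqueness statement, if $T\not\cong\G(D)$ then the polynomials $\rf(T,\cdot)$ and $\rf(\G(D),\cdot)$ are distinct, so $\rf(T,y)>\rf(\G(D),y)$ holds on a set of positive Lebesgue measure; strict monotonicity of $\log$ then upgrades the conclusion to the strict inequality $\IE(T)>\IE(\G(D))$, so the greedy tree is the unique minimiser.

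I expect no serious obstacle here, since everything follows from results already established. The only points that need a line of care are the convergence of the Coulson-type integral (inherited from \eqref{Eq:EnTree}, using $\rf(T,x^2)=1+O(x^2)$ near $0$ and polynomial growth of $\rf(T,x^2)$ at infinity) and the routine justification that a pointwise inequality between the integrands — strict on a positive-measure set in the non-isomorphic case — transfers to the integrals.
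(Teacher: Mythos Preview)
Your proof is correct and follows essentially the same route as the paper: express $\IE(T)=\Lel(T)$ via the Coulson formula and \eqref{Eq.lap.lemm3} as an integral of $\log \Ma(S(T),x^2)=\log \rf(T,x^2)$, then apply the pointwise inequality from Corollary~\ref{cha5.coro}/Theorem~\ref{cha5.thmro}. Your additional remarks on uniqueness and on convergence of the integral go slightly beyond what the paper writes out, but the core argument is the same.
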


\begin{proof}
Let $T$ be a tree and $S(T)$ its subdivision graph. Using the Coulson formula \eqref{Eq:EnTree} for the energy and the relation \eqref{Eq.lap.lemm3}, we obtain:

\[\Lel(T)=\IE(T)=\frac{2}{\pi}\int_0^{\infty}\frac{1}{x^2}\ln\left(\sum_{k}\m(S(T),k)x^{2k}\right)\,dx=\frac{2}{\pi}\int_0^{\infty}\frac{1}{x^2}\ln\Ma(S(T),x^2)\,dx.\]
Thus, the claim readily follows from Corollary~\ref{cha5.coro}.
\end{proof}

\subsection{A common generalisation of Wiener index and terminal Wiener index}\label{subsec:Wab} In analogy to the Wiener index, the terminal Wiener index \cite{gfp} is defined as the sum of all distances between pairs of leaves, and the spinal Wiener index \cite{bartlett} is the sum of all distances between pairs of non-leaves. It is known that both are minimised by greedy trees \cite{szekely,bartlett}.
We consider a common generalisation defined as follows: for two fixed positive numbers $a$ and $b$, we set
\[W_{a,b}(T)=\sum_{\{v,w\} \subseteq V(T)}\omega(v)\omega(w)\dis(v,w)=\frac{1}{2}\sum_{v \in V(T)}\sum_{w \in V(T)}\omega(v)\omega(w)\dis(v,w),\]

where
\[\omega(u)=\begin{cases}
a&\text{if $u$ is a leaf,}\\
b&\text{otherwise}.
\end{cases}\]

It is easy to see that $W_{1,1}$ is the Wiener index, $W_{1,0}$ is the terminal Wiener index, and $W_{0,1}$ is the spinal Wiener index. Let us prove an equivalent representation for $W_{a,b}$ that generalises~\eqref{cha5.wie4}.

\begin{prop}\label{Pro3}
We have
\begin{align*}
\sum _{\{v,w\} \subseteq V(T)} \omega(v)\omega(w)\dis(v,w)&=\sum _{vw \in E(T)}\left( \sum _{u\in V(T_v)}\omega(u)\right)\left(\sum _{u \in V(T_w)}\omega(u)\right),
\end{align*}
where $T_v$ and $T_w$ are the components of $T-vw$ containing $v$ and $w$ respectively.
\end{prop}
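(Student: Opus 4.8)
The plan is to prove the identity by the same edge-counting argument that underlies the classical formula \eqref{cha5.wie4}, carrying the weights $\omega$ along throughout. The key observation is that for any two vertices $v,w\in V(T)$, the distance $\dis(v,w)$ equals the number of edges on the unique $v$–$w$ path in $T$, and an edge $e$ lies on this path precisely when $v$ and $w$ are separated by $e$, i.e.\ they lie in different components of $T-e$. Hence I would write
\[
\dis(v,w)=\sum_{e\in E(T)}\mathbf{1}[e\text{ separates }v\text{ and }w],
\]
substitute this into the left-hand side, and interchange the order of summation.

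First I would expand: the double sum $\frac{1}{2}\sum_{v}\sum_{w}\omega(v)\omega(w)\dis(v,w)$ becomes, after inserting the indicator and swapping sums,
\[
\frac12\sum_{e\in E(T)}\ \sum_{v,w\text{ separated by }e}\omega(v)\omega(w).
\]
For a fixed edge $e=vw$ (abusing notation, writing the endpoints as $v,w$), removing $e$ splits $T$ into exactly the two components $T_v$ and $T_w$, and a pair $(x,y)$ of vertices is separated by $e$ iff one of them lies in $T_v$ and the other in $T_w$. Therefore the inner ordered sum equals
\[
2\left(\sum_{x\in V(T_v)}\omega(x)\right)\left(\sum_{y\in V(T_w)}\omega(y)\right),
\]
the factor $2$ accounting for the two choices of which of $x,y$ lies in $T_v$. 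The overall factor $\frac12$ cancels this $2$, and summing over all edges $e=vw\in E(T)$ yields exactly the claimed right-hand side.

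The only point that needs a little care — and the closest thing to an obstacle — is making sure the bookkeeping of the factors $\frac12$ and $2$ is consistent, and that the edge-path characterization of distance is invoked correctly (each edge of the $v$–$w$ path contributes $1$ to $\dis(v,w)$, and these are exactly the separating edges). Since $T$ is a tree, there are no complications from multiple paths, and the two components $T_v,T_w$ are well defined and partition $V(T)$; no induction is needed. I would present the computation as a short chain of displayed equalities, starting from the symmetric double-sum form on the right of the definition of $W_{a,b}$ and ending at $\sum_{vw\in E(T)}\big(\sum_{u\in V(T_v)}\omega(u)\big)\big(\sum_{u\in V(T_w)}\omega(u)\big)$.
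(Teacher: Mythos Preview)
Your proof is correct and follows essentially the same double-counting argument as the paper: both hinge on the observation that an edge $e$ lies on the unique $v'$--$w'$ path if and only if it separates $v'$ and $w'$, so each pair contributes $\omega(v')\omega(w')$ exactly $\dis(v',w')$ times to the right-hand side. The only cosmetic difference is direction: the paper starts from the right-hand side and counts contributions of pairs, whereas you start from the left-hand side, insert the indicator, and interchange sums.
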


\begin{proof}
A pair of two vertices $v'$ and $w'$ occurs on the right side of the equation with weight $\omega(v')\omega(w')$ for every edge $vw$ such that $v' \in T_v$ and $w' \in T_w$ (or the other way around). Equivalently, whenever the unique path $P(v', w')$ from $v'$ to $w'$ contains the edge $vw$. Every pair of two vertices $v'$ and $w'$ is counted $\dis(v',w')$ times in this way and therefore contributes $\omega(v')\omega(w') \dis(v',w')$. The sum over all pairs gives the left side of the equation.
\end{proof}

Let $\mathcal{L}(T)$ be the set of leaves of a tree $T$. If $T$ is rooted, the root is only counted as a leaf if it is the only vertex. In view of Proposition~\ref{Pro3}, we can write
\[W_{a,b}(T)=\sum_{vw \in E(T)}\rho_2(T_v)\rho_2(T_w),\]
where $\rho_2(T)=a|\mathcal{L}(T)|+b |V(T)-\mathcal{L}(T)|$. Here, $T_v$ and $T_w$ are regarded as rooted at $v$ and $w$, respectively.

\begin{lem}
Let $T$ be a tree with degree sequence $D$ for which $W_{a,b}(T)$ attains its minimum. For any pair of disjoint complete branches $A=[A_1,\dots,A_k]$ and $B=[B_1,\dots,B_{\ell}]$ in $T$, we have 

\begin{itemize}
\item either $k \geq \ell$ and $\min\{\rho_2(A_1),\dots,\rho_2(A_{k})\} \geq \max\{\rho_2(B_1),\dots,\rho_2(B_{\ell})\}$,

\item or $k \leq \ell$ and $\max\{\rho_2(A_1),\dots, \rho_2(A_{k})\} \leq \min\{\rho_2(B_1),\dots,\rho_2(B_{\ell})\}.$
\end{itemize}\label{cha5.tw1}
\end{lem}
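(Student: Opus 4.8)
The plan is to mirror the structure of the proof of Lemma~\ref{cha5.wie6}, since the representation $W_{a,b}(T)=\sum_{vw\in E(T)}\rho_2(T_v)\rho_2(T_w)$ established via Proposition~\ref{Pro3} is formally identical to the classical edge-decomposition formula \eqref{cha5.wie4} for the Wiener index, with $\rho_0$ replaced by $\rho_2$. First I would fix a decomposition $T=[A_1,\dots,A_k]vHw[B_1,\dots,B_\ell]$ realising the two disjoint complete branches $A=[A_1,\dots,A_k]$ and $B=[B_1,\dots,B_\ell]$, let $P=\rr(A)u_1\cdots u_t\rr(B)$ be the path joining their roots, and introduce $\alpha=\rho_2(A)$, $\beta=\rho_2(B)$, together with the quantities $z_j=\rho_2(U_j)$ for the side-components $U_j$ hanging off $P$, and the partial sums $p_j,q_j$ exactly as in Lemma~\ref{cha5.wie6}. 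The key observation is that $\rho_2$ is additive over the edge-cut in the same way $\rho_0$ is: for any edge $vw$ of $P$, the weight $\sum_{u\in V(T_v)}\omega(u)$ of one side is a sum of contributions $\alpha$ (or $\beta$) and some of the $z_j$, so the decomposition $W_{a,b}(T)=\sum_{j=0}^t(\alpha+p_j)(q_{t-j}+\beta)+C_T$ goes through verbatim, where $C_T$ collects the contributions of edges lying entirely inside $A$, $B$, or one of the $U_j$.

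Next I would expand and rewrite this as $W_{a,b}(T)=\alpha q+\beta p+\alpha\beta(t+1)+\sum_{j=0}^t p_jq_{t-j}+C_T$ with $p=\sum p_j$, $q=\sum q_j$, and observe that the last two terms are invariant under permutations of the $A_i$s and $B_i$s and under swapping the whole block $\{A_i\}$ with $\{B_i\}$ (which interchanges the degrees of $\rr(A)$ and $\rr(B)$ but preserves the degree sequence $D$). Hence minimality of $W_{a,b}(T)$ forces $\alpha q+\beta p+\alpha\beta(t+1)$ to be minimal under these rearrangements. Here I must check that $\Sigma=\alpha+\beta$ is preserved: indeed $\alpha+\beta=\rho_2(A)+\rho_2(B)$ equals $a$ times the number of leaves of $T$ lying in $A\cup B$ plus $b$ times the number of non-leaves there (with the usual convention that a root counts as a leaf only if its branch is a single vertex), and this count is unchanged by permuting the $A_i,B_i$ or swapping the two blocks, since it only depends on the multiset of branches and on which of $\rr(A),\rr(B)$ is a leaf — and a quick case analysis (both roots have degree $\geq 2$ in $T$, or the relevant branch is a single vertex) shows the total is invariant. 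Then, substituting $\beta=\Sigma-\alpha$, the expression becomes $\alpha q+(\Sigma-\alpha)p+\alpha(\Sigma-\alpha)(t+1)$, a strictly concave quadratic in $\alpha$ (coefficient $-(t+1)<0$), so its minimum over the allowed configurations is attained at an extreme value of $\alpha$. Exactly as in Lemma~\ref{cha5.wie6}, this extremality translates into: the largest branches, and as many of them as possible, must be gathered on one side, i.e. either $k\geq\ell$ with $\min\{\rho_2(A_i)\}\geq\max\{\rho_2(B_i)\}$, or $k\leq\ell$ with $\max\{\rho_2(A_i)\}\leq\min\{\rho_2(B_i)\}$, which is the claim.

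I expect the only genuine subtlety — the main obstacle — to be the verification that $\Sigma=\alpha+\beta$ (and the leftover term $C_T$) really is invariant under the block-swap operation; the leaf/non-leaf weighting of $\rho_2$ makes this slightly less automatic than in the pure-cardinality case $\rho_0$, because whether $\rr(A)$ or $\rr(B)$ counts as a leaf depends on whether its branch is trivial, and the swap can move a single-vertex branch from one side to the other. However, since the swap is a bijection on the combined multiset of branches and simultaneously on the roles of the two roots, the total weight is genuinely preserved, and $C_T$ depends only on the internal structure of the branches and the $U_j$, not on their placement, so it too is unaffected. Everything else is a routine transcription of the Wiener-index argument. (Note also that the lemma only asserts the exchange property; combined with the recursion $\rho_2([T_1,\dots,T_k])=b+\sum_i\rho_2(T_i)$ when $k\geq 1$, which is symmetric and strictly increasing, and the fact that $\rho_2(\bullet)=a$ is the minimum value, Theorem~\ref{cha5.thm} will later yield that the minimiser is $\G(D)$ — but that step lies beyond the present statement.)
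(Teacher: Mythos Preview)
Your approach is exactly the paper's: the paper's entire proof is the single sentence ``The proof is analogous to the one for the Wiener index (Lemma~\ref{cha5.wie6}), using $\rho_2$ instead of $\rho_0$,'' and you have faithfully unpacked that analogy. Your verification that $\Sigma=\alpha+\beta$ and $C_T$ are invariant under the admissible rearrangements is correct (and in fact immediate from the recursion $\rho_2([T_1,\dots,T_k])=b+\sum_i\rho_2(T_i)$, since both roots keep at least one branch and hence weight $b$); the concern you raise about roots becoming leaves does not arise because the degree sequence is preserved, so $\{k,\ell\}$ is fixed and $k,\ell\geq 1$.
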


\begin{proof}
The proof is analogous to the one for the Wiener index (Lemma~\ref{cha5.wie6}), using $\rho_2$ instead of $\rho_0$.
\end{proof}

In other words, a tree that minimises $W_{a,b}$ is $\rho_2$-exchange-extremal. Moreover, if $H=[H_1,\dots,H_k]$, then 
\begin{equation}\label{cha5.wie2}
\rho_2(H)=\sum_{i=1}^k\rho_2(H_i) + b.
\end{equation}
We see that conditions \ref{cond1}, \ref{cond2} and \ref{cond3} are satisfied for every fixed pair of positive numbers $a,b$.
Hence, we have the following theorem:

\begin{thm}
Given a degree sequence $D$, $W_{a,b}$ is minimised by $\G(D)$ for every fixed pair of positive numbers $a,b$.
\end{thm}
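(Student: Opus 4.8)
The plan is to combine Lemma~\ref{cha5.tw1} with the general machinery of Theorem~\ref{cha5.thm}, exactly as was done for the number of subtrees and for rooted spanning forests. First I would observe that Lemma~\ref{cha5.tw1} says precisely that a tree $T$ minimising $W_{a,b}$ is $\rho_2$-exchange-extremal in the sense of Definition~\ref{cha5.defn1}, with $\rho = \rho_2$; the decomposition $T = [A_1,\dots,A_k]vHw[B_1,\dots,B_\ell]$ used there is the same decomposition appearing in the definition of $\rho$-exchange-extremality. So it only remains to verify that $\rho_2$ satisfies the three hypotheses \ref{cond1}, \ref{cond2} and \ref{cond3} of Theorem~\ref{cha5.thm}, and then invoke that theorem.

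For \ref{cond1}, the recurrence \eqref{cha5.wie2}, namely $\rho_2([H_1,\dots,H_k]) = b + \sum_{i=1}^k \rho_2(H_i)$, exhibits the symmetric recurrence rule $f_{\rho_2}(x_1,\dots,x_k) = b + \sum_{i=1}^k x_i$; one should double-check \eqref{cha5.wie2} by noting that the leaves of $H$ are exactly the leaves of the $H_i$ (the roots of the $H_i$ are not leaves in $H$ since they are joined to $\rr(H)$, and $\rr(H)$ itself is an internal vertex as $k \ge 1$), while the internal vertices of $H$ are $\rr(H)$ together with the internal vertices of the $H_i$, plus the roots of any $H_i$ that happen to be single vertices — a small bookkeeping point that the identity already absorbs correctly when one writes $\rho_2$ as $a|\mathcal{L}(\cdot)| + b|V(\cdot) \setminus \mathcal{L}(\cdot)|$. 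For \ref{cond2}, since $a,b>0$, the function $f_{\rho_2}$ is strictly increasing in each variable and strictly increases when a further (positive) variable is adjoined. For \ref{cond3}, the single-vertex tree $\bullet$ has $\rho_2(\bullet) = a$ (its root is its only vertex, hence counted as a leaf), while any rooted tree $B$ with $|V(B)|>1$ has at least one leaf and at least one internal vertex, so $\rho_2(B) \ge a + b > a = \rho_2(\bullet)$; thus the minimum of $\rho_2$ is uniquely attained at $\bullet$.

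With these three conditions checked, Theorem~\ref{cha5.thm} applies directly: any $\rho_2$-exchange-extremal tree with degree sequence $D$ is the greedy tree $\G(D)$. Since a minimiser of $W_{a,b}$ among trees with degree sequence $D$ is $\rho_2$-exchange-extremal by Lemma~\ref{cha5.tw1}, it must therefore be $\G(D)$, which is the assertion of the theorem. I do not expect any genuine obstacle here; the only place demanding a little care is the verification that $\rho_2$ with the pair $(a,b)$ genuinely falls under the hypotheses, in particular that the weight of the single-vertex tree is $a$ rather than $b$ (because of the convention that the root is counted as a leaf exactly when the tree is a single vertex) — this is exactly what makes \ref{cond3} hold with strict and unique minimality, which in turn is what yields uniqueness of the extremal tree. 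The rest is a routine appeal to the already-proven general theorem.
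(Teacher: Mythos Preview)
Your proposal is correct and follows exactly the paper's own argument: invoke Lemma~\ref{cha5.tw1} to get $\rho_2$-exchange-extremality of any minimiser, check that $\rho_2$ satisfies conditions~\ref{cond1}--\ref{cond3} via the recurrence~\eqref{cha5.wie2}, and then apply Theorem~\ref{cha5.thm}. One minor slip in your parenthetical verification of~\eqref{cha5.wie2}: when $H_i$ is a single vertex its root is a \emph{leaf} of $H$ (it has degree $1$ there), not an internal vertex --- but since the convention also counts it as a leaf of $H_i$, its contribution $a$ matches on both sides and the recurrence holds regardless; the paper simply states~\eqref{cha5.wie2} without going into this detail.
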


If we take the limits $a \to 0$ or $b \to 0$, we find that the greedy tree minimises $W_{0,b}$ and $W_{a,0}$, thus in particular the terminal and the spinal Wiener index. However, it may not be the unique optimal tree, since strict inequalities may become non-strict in the limit. Let us, for example, exhibit this phenomenon for the terminal Wiener index $W_{1,0}$.
Consider the trees with degree sequence $(3,2,2,2,2,2,1,1,1)$ shown in Figure~\ref{cha5.tw}.
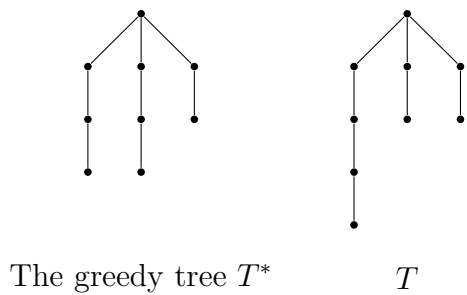
\begin{figure}[H]
\centering
\begin{tikzpicture}[scale=0.7]
\node[fill=black,circle,inner sep=1pt] (t1) at (0,0) {};
\node[fill=black,circle,inner sep=1pt] (t2) at (-1,-1) {};
\node[fill=black,circle,inner sep=1pt] (t3) at (0,-1) {};
\node[fill=black,circle,inner sep=1pt] (t4) at (1,-1) {};
\node[fill=black,circle,inner sep=1pt] (t5) at (-1,-2) {};
\node[fill=black,circle,inner sep=1pt] (t6) at (-1,-3) {};
\node[fill=black,circle,inner sep=1pt] (t7) at (0,-2) {};
\node[fill=black,circle,inner sep=1pt] (t8) at (0,-3) {};
\node[fill=black,circle,inner sep=1pt] (t9) at (1,-2) {};
\draw (t1)--(t2)--(t5)-- (t6);
\draw (t1)--(t3)--(t7)-- (t8);
\draw (t1)--(t4)--(t9);
\node at (0,-5) {The greedy tree $T^*$};
\node[fill=black,circle,inner sep=1pt] (s1) at (5,0) {};
\node[fill=black,circle,inner sep=1pt] (s2) at (4,-1) {};
\node[fill=black,circle,inner sep=1pt] (s3) at (5,-1) {};
\node[fill=black,circle,inner sep=1pt] (s4) at (6,-1) {};
\node[fill=black,circle,inner sep=1pt] (s5) at (4,-2) {};
\node[fill=black,circle,inner sep=1pt] (s6) at (4,-3) {};
\node[fill=black,circle,inner sep=1pt] (s7) at (5,-2) {};
\node[fill=black,circle,inner sep=1pt] (s8) at (4,-4) {};
\node[fill=black,circle,inner sep=1pt] (s9) at (6,-2) {};
\draw (s1)--(s2)--(s5)-- (s6)-- (s8);
\draw (s1)--(s3)--(s7);
\draw (s1)--(s4)--(s9);
\node at (5,-5) {$T$};
\end{tikzpicture}
\caption{Optimal trees for the terminal Wiener index.}
\label{cha5.tw}
\end{figure}
Note that $W_{1,0}(T^*)=6+5+5=16$ and $W_{1,0}(T)=6+6+4=16$, so the greedy tree $T^*$ and the tree $T$ have the same terminal Wiener index and are thus both extremal. Both trees satisfy Lemma~\ref{cha5.tw1}.

\subsection{The Steiner Wiener index}\label{subsec:steiner}
The Steiner distance of a graph, introduced by Chartrand et al.~\cite{chartrand} in 1989, is a natural and nice generalisation of the classical graph distance. For $r \geq 2$, let ${v_1,v_2,\dots,v_r}$ be a set of vertices of a graph $G$. We denote by $S(v_1,v_2\dots,v_r)$ the smallest subtree of $G$ which contains all the vertices $v_1,v_2,\dots,v_r$. Then the \textit{Steiner distance} of $\{v_1,\dots,v_r\}$, denoted by $\sd(v_1,v_2,\dots,v_r)$, is the number of edges in $S(v_1,v_2,\dots,v_r)$.
For $r=2$, the Steiner distance clearly coincides with the classical distance, i.e., $\sd(v_1,v_2)=\dis(v_1,v_2)$.

In \cite{steiner}, the authors define the \textit{Steiner $r$-Wiener index} $\SW_r(G)$ of a graph $G$ as a generalisation of the Wiener index in the following way:
\[\SW_r(G) =\sum_{\{v_1,\dots,v_r\} \subseteq V(G)}\sd(v_1,\dots,v_r).\]
It is straightforward that the case $r=2$ corresponds to the classical Wiener index. We have $\SW_1(G) = 0$ for every graph $G$ and $\SW_n(G) = n-1$ for every $n$-vertex graph $G$, so it is natural to restrict $r$ to the set $\{2,3,\ldots,n-1\}$. There is also an alternative formula for the Steiner Wiener index of trees generalising~\eqref{cha5.wie4}, see \cite{steiner}:

\begin{prop}
Let $T$ be an $n$-vertex tree, and $r$ a positive integer. We have
\begin{align}
SW_r(T)&=\sum_{uv \in E(T)} \sum_{i=1}^{r-1}\binom{|V(T_u)|}{i} \binom{|V(T_v)|}{r-i}\label{sw1}\\
&=\sum_{uv \in E(T)} \Big( \binom{n}{r}-\binom{|V(T_u)|}{r} -\binom{|V(T_v)|}{r} \Big), \label{sw2}
\end{align}
where $T_u$ and $T_v$ are the components of $T-uv$ containing $u$ and $v$ respectively.
\end{prop}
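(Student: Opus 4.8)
The plan is to interchange the order of summation in the definition $\SW_r(T)=\sum_{\{v_1,\dots,v_r\}\subseteq V(T)}\sd(v_1,\dots,v_r)$, after re-expressing each Steiner distance as a count over the edges of $T$. The key fact --- the Steiner analogue of the identity behind~\eqref{cha5.wie4} --- is that for a vertex set $W=\{v_1,\dots,v_r\}$ and an edge $e=uv\in E(T)$, the edge $e$ lies in the Steiner tree $S(v_1,\dots,v_r)$ if and only if both components $T_u$ and $T_v$ of $T-e$ contain a vertex of $W$. I would establish this characterisation first.

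For the characterisation, I would rely on the standard fact that $W$ has a unique minimal spanning subtree in $T$, namely the union of the paths $P(v_i,v_j)$ over all $i,j$, and that this subtree is contained in every subtree of $T$ that contains $W$. If $V(T_u)\cap W$ and $V(T_v)\cap W$ are both nonempty, pick $v_i\in V(T_u)$ and $v_j\in V(T_v)$; the path $P(v_i,v_j)$ must traverse $e$, hence $e\in E(S(v_1,\dots,v_r))$. Conversely, if $W$ avoids one side, say $W\subseteq V(T_v)$, then $T_v$ is a subtree of $T$ containing $W$, so $S(v_1,\dots,v_r)\subseteq T_v$ and $e\notin E(S(v_1,\dots,v_r))$. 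It follows that $\sd(v_1,\dots,v_r)$ equals the number of edges $e=uv$ of $T$ for which $W$ meets both $V(T_u)$ and $V(T_v)$.

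Next I would sum this over all $r$-element subsets $W\subseteq V(T)$ and swap the two sums, so that $\SW_r(T)=\sum_{uv\in E(T)}N_{uv}$, where $N_{uv}$ is the number of $r$-element subsets of $V(T)=V(T_u)\sqcup V(T_v)$ having at least one vertex on each side. Classifying such a subset by the number $i$ of its vertices in $T_u$, which then ranges over $1,\dots,r-1$, yields $N_{uv}=\sum_{i=1}^{r-1}\binom{|V(T_u)|}{i}\binom{|V(T_v)|}{r-i}$, which is~\eqref{sw1}. Counting by complementation instead --- and using that for $r\ge 1$ an $r$-subset cannot be contained in $V(T_u)$ and in $V(T_v)$ simultaneously, so the two forbidden events are disjoint --- yields $N_{uv}=\binom{n}{r}-\binom{|V(T_u)|}{r}-\binom{|V(T_v)|}{r}$, which is~\eqref{sw2}. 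The equality of the two formulas for $N_{uv}$ is, alternatively, immediate from the Vandermonde identity $\sum_{i=0}^{r}\binom{a}{i}\binom{b}{r-i}=\binom{a+b}{r}$ with $a=|V(T_u)|$, $b=|V(T_v)|$ and $a+b=n$.

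I do not expect a genuine obstacle in this argument. The only step needing a moment of care is the edge characterisation of the Steiner tree, specifically the assertion that the minimal subtree spanning $W$ lies inside any branch $T_v$ that already contains $W$; this is a routine consequence of the uniqueness of minimal spanning subtrees in a tree. The degenerate values of $r$ need no separate treatment: for $r=1$ both sides equal $0$, for $r=n$ both sides equal $n-1$, and for $r>n$ all terms vanish, so the stated identities hold for every positive integer $r$.
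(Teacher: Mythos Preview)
Your argument is correct and is essentially the same as the paper's: the paper cites~\cite{steiner} for~\eqref{sw1} and invokes Vandermonde for~\eqref{sw2}, but then also gives the direct combinatorial argument you present, namely that $uv\in E(S(v_1,\dots,v_r))$ if and only if $\{v_1,\dots,v_r\}$ meets both $V(T_u)$ and $V(T_v)$, and counts the complementary subsets. Your write-up merely spells out the edge characterisation in more detail.
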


\begin{proof}
Equation \eqref{sw1} is already proven in \cite{steiner}, and Equation \eqref{sw2} follows directly from the Vandermonde identity. There is also a direct combinatorial argument: the edge $uv$ is contained in $S(v_1,v_2,\ldots,v_r)$ if and only if the set $\{v_1,v_2,\ldots,v_r\}$ contains vertices of both $T_u$ and $T_v$. Equivalently, $\{v_1,v_2,\ldots,v_r\}$ can be any set of $r$ vertices that is not a subset of either $V(T_u)$ or $V(T_v)$. Our formula follows immediately.
\end{proof}

Zhang et al. \cite[Question 1.1]{zzwz} asked in a recent paper whether the greedy tree minimises the Steiner $r$-Wiener index for every $r$. In the following, we will answer this question affirmatively. Recall for the following lemma that $\rho_0(T)$ is simply defined to be the number of vertices of $T$. As for the Wiener index, we will show $\rho_0$-exchange-extremality. To avoid degeneracies (see the example below), we consider $\SW_r(T) + \varepsilon W(T)$ for some fixed $\varepsilon > 0$ and later let $\varepsilon$ go to $0$.

\begin{lem}\label{stewie}
Let $\varepsilon$ be a fixed positive real number, and let $T$ be a tree for which $\SW_r(T) + \varepsilon W(T)$ attains its minimum among trees with degree sequence $D$. Then, for any two disjoint complete branches $A=[A_1,\dots,A_k]$ and $B=[B_1,\dots,B_{\ell}]$ in $T$, we have 
\begin{itemize}
\item either $k \geq \ell$ and $\min\{\rho_0(A_1),\dots,\rho_0(A_{k})\} \geq \max\{\rho_0(B_1),\dots,\rho_0(B_{\ell})\}$,

\item or $k \leq \ell$ and $\max\{\rho_0(A_1),\dots, \rho_0(A_{k})\} \leq \min\{\rho_0(B_1),\dots,\rho_0(B_{\ell})\}.$
\end{itemize}
In other words, $T$ is $\rho_0$-exchange-extremal.
\end{lem}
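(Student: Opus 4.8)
The plan is to mimic the proof of Lemma~\ref{cha5.wie6}, but with two separate ``potential functions'': one coming from $\SW_r$ and one from the Wiener index $W$. Fix a decomposition $T = [A_1,\dots,A_k]vHw[B_1,\dots,B_{\ell}]$, let $P = \rr(A)u_1\cdots u_t\rr(B)$ be the path between the two branch roots, and keep exactly the notation of Lemma~\ref{cha5.wie6}: $\alpha = \rho_0(A)$, $\beta = \rho_0(B)$, $U_j$ the component of $u_j$ after deleting the edges of $P$, $z_j = \rho_0(U_j)$, and $p_j, q_j$ the partial sums. First I would use formula~\eqref{sw2} to expand $\SW_r(T)$ over the edges of $T$: the contribution of the edges lying in $A$, $B$ or the $U_j$'s is invariant under rearranging the $A_i$'s and $B_i$'s (and under swapping the degrees of $\rr(A),\rr(B)$), so only the $t+1$ edges of $P$ matter. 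For the edge of $P$ between the blocks indexed $j$ and $j+1$, the two sides of the deleted edge have $\alpha + p_j$ and $\beta + q_{t-j}$ vertices, contributing $\binom{n}{r} - \binom{\alpha+p_j}{r} - \binom{\beta+q_{t-j}}{r}$. Hence, up to an additive constant $C'_T$ that is invariant under all the rearrangements,
\[
\SW_r(T) = (t+1)\binom{n}{r} - \sum_{j=0}^{t}\binom{\alpha+p_j}{r} - \sum_{j=0}^{t}\binom{\beta+q_{t-j}}{r} + C'_T.
\]

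Next I would isolate the $\alpha$-dependence. Since $\alpha + \beta = \Sigma$ is fixed and each $p_j, q_j$ is fixed, write
\[
\SW_r(T) = -\sum_{j=0}^{t}\Bigl(\binom{\alpha+p_j}{r} + \binom{\Sigma-\alpha+q_{t-j}}{r}\Bigr) + (\text{terms invariant under the rearrangements}).
\]
The key analytic fact is that $\alpha \mapsto \binom{\alpha+p_j}{r} + \binom{\Sigma-\alpha+q_{t-j}}{r}$ is a convex function of $\alpha$ on the relevant range (the function $m \mapsto \binom{m}{r}$, extended to reals via the polynomial $m(m-1)\cdots(m-r+1)/r!$, is convex for $m \ge r-1$, and both summands are convex compositions with affine maps), so each summand with the minus sign in front is \emph{concave} in $\alpha$; the same argument as in Lemma~\ref{cha5.wie6} (the Wiener part) shows that the $W$-contribution is also a strictly concave function of $\alpha$ after the invariant terms are stripped away. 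Therefore $\SW_r(T) + \varepsilon W(T)$, viewed as a function of $\alpha$ with everything else fixed, is the sum of a concave function and an $\varepsilon$-multiple of a strictly concave function, hence strictly concave in $\alpha$. A strictly concave function on an interval attains its minimum only at an endpoint, so minimality forces $\alpha$ to be as small as possible or as large as possible among all values achievable by permuting the $A_i$'s and $B_i$'s and possibly swapping the degrees of $\rr(A), \rr(B)$. Exactly as in Lemma~\ref{cha5.wie6}, ``$\alpha$ extremal'' translates into the two stated alternatives: either $k \ge \ell$ with $\min\{\rho_0(A_i)\} \ge \max\{\rho_0(B_i)\}$, or $k \le \ell$ with $\max\{\rho_0(A_i)\} \le \min\{\rho_0(B_i)\}$.

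The main obstacle I anticipate is purely bookkeeping rather than conceptual: one must be careful that $\binom{m}{r}$ is being used as a real polynomial in $m$ and that convexity holds on the whole range of values $\alpha + p_j$ can take — this is fine because $\alpha \ge 1$ and $p_j \ge 0$, and $\binom{m}{r}$ as a polynomial is convex for $m \ge r-1$ while for $m < r-1$ it either vanishes or is still convex on the integer lattice; since we only ever evaluate at integers $\ge 1$, and in fact the whole argument can be run at the level of the finite set of attainable configurations, strict concavity of the sum is all that is needed. A secondary point to state cleanly is \emph{why} we add $\varepsilon W(T)$: without it, $\SW_r$ alone is only concave (not strictly), so its minimum over the discrete set of configurations need not be attained exclusively at an $\alpha$-endpoint, which is precisely the degeneracy illustrated by the terminal Wiener index example; the $\varepsilon W$ term restores strictness, and afterwards one lets $\varepsilon \to 0$ (done outside this lemma) to conclude that the greedy tree is \emph{a} minimiser of $\SW_r$ via Theorem~\ref{cha5.thm0}.
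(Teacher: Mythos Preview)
Your proposal is correct and follows essentially the same approach as the paper: expand $\SW_r(T)+\varepsilon W(T)$ over the path edges via~\eqref{sw2}, observe that the resulting expression is strictly concave in $\alpha$ (concavity from the $-\binom{\cdot}{r}$ terms, strictness from the $\varepsilon W$ term), and conclude that the minimum forces $\alpha$ to an endpoint. The paper streamlines one step by writing $\beta+q_{t-j}=n-\alpha-p_j$ directly (since $\alpha+\beta+p_t=n$ and $p_j+q_{t-j}=p_t$), which makes the concavity in $\alpha$ immediate without introducing $\Sigma$; your convexity worries about $\binom{m}{r}$ are unnecessary once you note that only integer arguments $m\ge 0$ occur and the second difference $\binom{m+2}{r}-2\binom{m+1}{r}+\binom{m}{r}=\binom{m}{r-2}\ge 0$.
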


\begin{proof}
We use the same notation as in the proof of Lemma~\ref{cha5.wie6}, see also Figure~\ref{cha5.fig5} again. We have
\begin{align*}
\SW_r(T) + \varepsilon W(T) &= \sum_{j=0}^{t} \Big( \binom{n}{r} - \binom{\alpha+p_j}{r} - \binom{q_{t-j}+\beta}{r} + \varepsilon(\alpha+p_j)(q_{t-j}+\beta) \Big) +C_T, \\
&= \sum_{j=0}^{t} \Big( \binom{n}{r} - \binom{\alpha+p_j}{r} - \binom{n-\alpha-p_j}{r} + \varepsilon(\alpha+p_j)(n-\alpha-p_j) \Big) +C_T,
\end{align*}
where $C_T$ is invariant under permutations of the $A_i$s and $B_i$s. This is a strictly concave function of $\alpha$, so it can only attain its minimum when $\alpha$ is either at its largest or smallest value. As in the proof of Lemma~\ref{cha5.wie6}, this implies the statement.
\end{proof}

We thus have the following theorem.

\begin{thm}
Given a degree sequence $D$, a positive integer $r$ and a positive real number~$\varepsilon$, $\SW(T) + \varepsilon W(T)$ attains its minimum if and only if $T$ is the greedy tree $\G(D)$.
\end{thm}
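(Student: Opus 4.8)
The plan is to combine Lemma~\ref{stewie} with Theorem~\ref{cha5.thm0} and then prove uniqueness by a direct comparison argument that pins down the sharp cases. First, Lemma~\ref{stewie} tells us that any tree $T$ minimising $\SW_r(T)+\varepsilon W(T)$ among trees with degree sequence $D$ is $\rho_0$-exchange-extremal, and Theorem~\ref{cha5.thm0} then says $T$ must be a greedy tree. Since greedy trees with a given degree sequence are unique up to isomorphism, this already shows that $\G(D)$ is the \emph{only} candidate for the minimum; what remains is to verify that $\G(D)$ genuinely attains the minimum (rather than, say, there being no minimiser other than through the exchange argument). That is automatic: the set of trees with degree sequence $D$ is finite and nonempty, so a minimiser exists, and by the above it must be $\G(D)$.

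So the theorem as stated is essentially immediate from the two ingredients, and the only real content beyond ``plug in'' is making sure the equivalence ``attains its minimum if and only if $T\cong\G(D)$'' is genuinely an iff. One direction is the existence-plus-uniqueness argument just sketched. For the converse, since every minimiser is isomorphic to $\G(D)$ and $\SW_r+\varepsilon W$ is isomorphism-invariant, $\G(D)$ attains the common minimum value, so $T\cong\G(D)$ implies $T$ is a minimiser. I would write this up in two or three sentences.

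The one point that deserves a word of care is that Lemma~\ref{stewie} is stated for \emph{complete branches} $A,B$, whereas Definition~\ref{cha5.defn1} of $\rho$-exchange-extremality is phrased via decompositions $T=[L_1,\dots,L_k]vHw[R_1,\dots,R_\ell]$; one should note (as is done implicitly for the Wiener index) that these are the same condition — a decomposition at two leaves $v,w$ of $H$ corresponds exactly to choosing two disjoint complete branches hanging off the path between them, with the remaining tree playing the role of $H$. Granting that translation, Theorem~\ref{cha5.thm0} applies verbatim.

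I do not anticipate a genuine obstacle here; the hard work has already been done in Lemma~\ref{stewie} (the strict concavity computation with the binomial-coefficient sum) and in the cited Theorem~\ref{cha5.thm0}. The proof is therefore just:
\begin{proof}
By Lemma~\ref{stewie}, any tree $T$ with degree sequence $D$ that minimises $\SW_r(T)+\varepsilon W(T)$ is $\rho_0$-exchange-extremal, and Theorem~\ref{cha5.thm0} shows that such a tree is a greedy tree, hence isomorphic to $\G(D)$. Since there are only finitely many trees with degree sequence $D$, a minimiser exists, so $\G(D)$ attains the minimum value. As $\SW_r(\cdot)+\varepsilon W(\cdot)$ is invariant under isomorphism, $\SW_r(T)+\varepsilon W(T)$ is minimal precisely when $T\cong\G(D)$.
\end{proof}
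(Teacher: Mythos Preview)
Your proposal is correct and follows exactly the same route as the paper: after proving Lemma~\ref{stewie}, the paper simply writes ``We thus have the following theorem'' and states the result without a separate proof, the implicit argument being precisely Lemma~\ref{stewie} combined with Theorem~\ref{cha5.thm0}. Your write-up just makes this explicit (and correctly notes the finiteness/existence point and the translation between the two phrasings of $\rho_0$-exchange-extremality).
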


As mentioned before, we now take the limit $\varepsilon \to 0$ to obtain the following theorem.

\begin{thm}
Given a degree sequence $D$, the greedy tree attains the minimum of the Steiner $r$-Wiener index $\SW_r$.
\end{thm}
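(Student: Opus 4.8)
The plan is to deduce this from the preceding theorem by a limiting argument. For each fixed $\varepsilon > 0$, the previous theorem tells us that $\SW_r(T) + \varepsilon W(T)$ is minimised uniquely by $\G(D)$ among trees with degree sequence $D$. In particular, for every tree $T$ with degree sequence $D$ we have
\[
\SW_r(\G(D)) + \varepsilon W(\G(D)) \leq \SW_r(T) + \varepsilon W(T).
\]
First I would fix an arbitrary tree $T$ with degree sequence $D$ and let $\varepsilon \to 0^+$ in this inequality. Since both $W$ and $\SW_r$ take finite values that do not depend on $\varepsilon$, the terms $\varepsilon W(\G(D))$ and $\varepsilon W(T)$ vanish in the limit, leaving $\SW_r(\G(D)) \leq \SW_r(T)$. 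As $T$ was arbitrary, $\G(D)$ attains the minimum of $\SW_r$ among all trees with degree sequence $D$.

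The only subtlety is that the set of trees with a given degree sequence $D$ is finite, so there is nothing to worry about regarding convergence: the inequality holds for every $\varepsilon > 0$ and passes to the limit trivially. I would also remark, in line with the discussion for the terminal Wiener index in Section~\ref{subsec:Wab}, that uniqueness need not survive the limit — distinct trees can tie for the minimum of $\SW_r$ even though the perturbed functional $\SW_r + \varepsilon W$ has $\G(D)$ as its unique minimiser — and the example of Zhang et al.\ or a small explicit degree sequence could be cited to illustrate this.

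There is really no main obstacle here; the substantive work was already done in Lemma~\ref{stewie} (establishing $\rho_0$-exchange-extremality of the minimiser of the perturbed functional) and Theorem~\ref{cha5.thm0} (identifying $\rho_0$-exchange-extremal trees as greedy trees). The present statement is a clean corollary obtained by the standard device of perturbing a degenerate objective function by a strictly ``separating'' term (here $W$, whose associated recurrence rule $f_{\rho_0}$ is strictly increasing) to force uniqueness, proving the sharp statement for the perturbation, and then letting the perturbation parameter tend to zero. This simultaneously answers Question~1.1 of \cite{zzwz} in the affirmative.
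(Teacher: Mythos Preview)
Your proposal is correct and follows exactly the approach of the paper: the authors simply state that the result follows by taking the limit $\varepsilon \to 0$ in the preceding theorem. Your added remarks about finiteness of the set of trees and the loss of uniqueness in the limit are in the same spirit as the paper's subsequent discussion and example.
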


As was also the case previously for the terminal Wiener index and the spinal Wiener index, the greedy tree might not be unique with the minimum Steiner $r$-Wiener index. This can happen when $r$ is quite large (compared to the total number of vertices), as in the following example: for the degree sequence $D = (3,2,2,1,1,1)$, both the greedy tree and the only other tree with the same degree sequence have the same Steiner $5$-Wiener index of $27$.

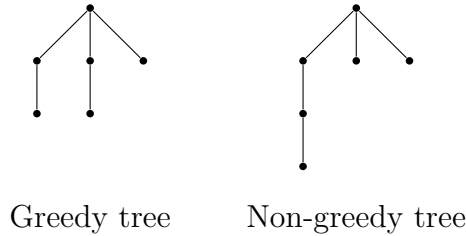
\begin{figure}[H]
\centering
\begin{tikzpicture}[scale=0.7]
\node[fill=black,circle,inner sep=1pt] (t1) at (0,0) {};
\node[fill=black,circle,inner sep=1pt] (t2) at (-1,-1) {};
\node[fill=black,circle,inner sep=1pt] (t3) at (0,-1) {};
\node[fill=black,circle,inner sep=1pt] (t4) at (1,-1) {};
\node[fill=black,circle,inner sep=1pt] (t5) at (-1,-2) {};
\node[fill=black,circle,inner sep=1pt] (t7) at (0,-2) {};
\draw (t1)--(t2)--(t5);
\draw (t1)--(t3)--(t7);
\draw (t1)--(t4);
\node at (0,-4) {Greedy tree};
\node[fill=black,circle,inner sep=1pt] (s1) at (5,0) {};
\node[fill=black,circle,inner sep=1pt] (s2) at (4,-1) {};
\node[fill=black,circle,inner sep=1pt] (s3) at (5,-1) {};
\node[fill=black,circle,inner sep=1pt] (s4) at (6,-1) {};
\node[fill=black,circle,inner sep=1pt] (s5) at (4,-2) {};
\node[fill=black,circle,inner sep=1pt] (s6) at (4,-3) {};
\draw (s1)--(s2)--(s5)-- (s6);
\draw (s1)--(s3);
\draw (s1)--(s4);
\node at (5,-4) {Non-greedy tree};
\end{tikzpicture}
\caption{Both trees with degree sequence $(3,2,2,1,1,1)$ have the same Steiner $5$-Wiener index.}
\end{figure}

\section{Decreasing recurrence rule $f_{\rho}$}\label{recdec}
Let us first mention some definitions, which are needed to describe an $\M$-tree. We use similar terminology and notation as in \cite{Dadah1}. Let us remark that, even though large and small degrees alternate, the concept is slightly different from that of an ``alternating greedy tree'', as introduced in \cite{wang14}.

\begin{defn}
A complete branch $B=[B_1,\dots,B_k]$ of a tree $T$ is called a \textit{pseudo-leaf branch} and its root a \textit{pseudo-leaf} if $|V(B_1)|=|V(B_2)|=\cdots=|V(B_k)|=1$; i.e.,~all vertices in $B$, except for the root $\rr(B)$, are leaves. 
\end{defn}

We simply write $[d]$ for a pseudo-leaf branch with $d$ vertices (a root and $d-1$ leaves). In particular, $[1]$ stands for a single vertex.

\begin{defn}
Let $(d_1,\dots,d_t,1,\dots,1)$ be the degree sequence of a tree $T$, where $d_j \geq 2$ for $1\leq j \leq t$. The $t$-tuple $(d_1,\dots,d_t)$ is called the \textit{reduced degree sequence} of $T$.
We assume that the $d_i$s are in non-increasing order, i.e., $d_1 \geq d_2 \geq \cdots \geq d_t$. 
\end{defn}

By the handshake lemma, the sum of the number of leaves of $T$ and the degrees of the non-leaf vertices, i.e., $|\mathcal{L}(T)|+\sum_{i=1}^td_i$, where $(d_1,\dots,d_t)$ is the reduced degree sequence of $T$, is equal to $2(|\mathcal{L}(T)|+t-1)$. Thus, the number of leaves is 
$
|\mathcal{L}(T)|= 2-2t+\sum_{i=1}^td_i.
$
This implies that two trees with the same reduced degree sequence have the same number of leaves, therefore they have the same degree sequence. 

Next, we give an explicit construction for the $\M$-tree $\M(D)$ with degree sequence $D$. Following the previous remark, we also write $\M(D')$ instead of $\M(D)$ for the same $\M$-tree, where $D'$ is the reduced degree sequence corresponding to $D$. 

\begin{defn}
\label{Def:AltGreed}
Let $(d_1,\dots,d_t)$ be a reduced degree sequence of a tree. If $t\leq d_t+1$, then $\M(d_1,\dots,d_t)$ is the tree obtained by merging $t-1$ leaves of a star $[1+d_t]$ with the roots of the $t-1$ stars $[d_1],[d_2],\dots,[d_{t-1}]$. In our formal notation, it can be written as $[[d_1],[d_2],\ldots,[d_{t-1}],[1],\ldots,[1]]$. We label selected vertices as shown in Figure~\ref{cha5.fig2}, in such a way that
\begin{equation}
d(v_i) \leq d(v_j)\quad \text{if}\, i <j.\label{cha5.eq4}
\end{equation}
At this point all non-leaf vertices are labelled.

\begin{figure}[H]
\centering
\begin{tikzpicture}[scale=1]
\node[fill=black,circle,inner sep=1pt] (v1) at (7,0) {};
\node[fill=black,circle,inner sep=1pt] (v2) at (6.5,-1) {};
\node[fill=black,circle,inner sep=1pt] (v3) at (8.5,-1) {};
\node[fill=black,circle,inner sep=1pt] (v4) at (10,-1) {};
\node[fill=black,circle,inner sep=1pt] (v5) at (11,-1) {};
\node[fill=black,circle,inner sep=1pt] (v6) at (6,-2) {};
\node[fill=black,circle,inner sep=1pt] (v7) at (7,-2) {};
\node[fill=black,circle,inner sep=1pt] (v8) at (8,-2) {};
\node[fill=black,circle,inner sep=1pt] (v9) at (9,-2) {};
\draw (v1)--(v2);
\draw (v1)--(v3);
\draw (v1)--(v4);
\draw (v1)--(v5);
\draw (v2)--(v6);
\draw (v2)--(v7);
\draw (v3)--(v8);
\draw (v3)--(v9);
\node[fill=black,circle,inner sep=0.7pt]  at (7,-1) {};
\node[fill=black,circle,inner sep=0.7pt]  at (7.5,-1) {};
\node[fill=black,circle,inner sep=0.7pt]  at (8,-1) {};
\node[fill=black,circle,inner sep=0.7pt]  at (10.2,-1) {};
\node[fill=black,circle,inner sep=0.7pt]  at (10.4,-1) {};
\node[fill=black,circle,inner sep=0.7pt]  at (10.6,-1) {};
\node[fill=black,circle,inner sep=0.7pt]  at (6.25,-2) {};
\node[fill=black,circle,inner sep=0.7pt]  at (6.5,-2) {};
\node[fill=black,circle,inner sep=0.7pt]  at (6.75,-2) {};
\node[fill=black,circle,inner sep=0.7pt]  at (8.25,-2) {};
\node[fill=black,circle,inner sep=0.7pt]  at (8.5,-2) {};
\node[fill=black,circle,inner sep=0.7pt]  at (8.75,-2) {};
\node at (7,0.3) {$v_1$};
\node at (6.2,-1) {$v_t$};
\node at (8.8,-1) {$v_2$};
\end{tikzpicture}

\caption{Labelling of the vertices of $\M(d_1,\dots,d_t)$ when $t\leq d_t+1$.}
\label{cha5.fig2}
\end{figure}
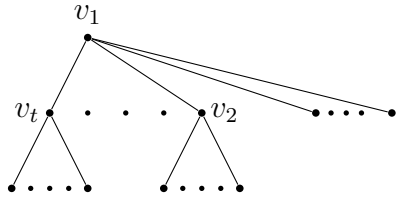

On the other hand, if $t \geq d_t +2$, we construct $\M(d_1,\dots,d_t)$ recursively. Let $\ell$ be the largest integer such that $v_{\ell}$ is a label in $\M(d_{d_t},\dots,d_{t-1})$, and let $s$ be the smallest integer such that $v_s$ is adjacent to a leaf in $\M(d_{d_t},\dots,d_{t-1})$. Let $R_{d_t}=[[d_1],\dots,[d_{d_t-1}]]$, where the pseudo-leaves are labelled $v_{\ell+1},\dots,$ 
$v_{\ell+d_t-1}$, still respecting \eqref{cha5.eq4}. $\M(d_1,\dots,d_t)$ is the tree obtained by merging the root of $R_{d_t}$ with a leaf adjacent to $v_s$.

\begin{figure}[H]
\centering
\begin{tikzpicture}[scale=0.7]
\node[fill=black,circle,inner sep=1pt] (t1) at (0-1,0) {};
\node[fill=black,circle,inner sep=1pt] (t2) at (-2-1,-1) {};
\node[fill=black,circle,inner sep=1pt] (t3) at (0-1,-1) {};
\node[fill=black,circle,inner sep=1pt] (t4) at (2-1,-2) {};
\node[fill=black,circle,inner sep=1pt] (t5) at (-2.5-1,-2) {};
\node[fill=black,circle,inner sep=1pt] (t6) at (-2-1,-2) {};
\node[fill=black,circle,inner sep=1pt] (t7) at (-1.5-1,-2) {};
\node[fill=black,circle,inner sep=1pt] (t8) at (-0.5-1,-2) {};
\node[fill=black,circle,inner sep=1pt] (t9) at (0.5-1,-2) {};
\draw (t1)--(t2);
\draw (t1)--(t3);
\draw (t1)--(t4);
\draw (t2)--(t5);
\draw (t2)--(t6);
\draw (t2)--(t7);
\draw (t3)--(t8);
\draw (t3)--(t9);
\node at (0-1,0.3) {$v_1$};
\node at (-2.3-1,-1) {$v_3$};
\node at (0.3-1,-1) {$v_2$};
\draw[->, line width=1pt] (2-1,-1)--(2.6,-1);
\node[fill=black,circle,inner sep=1pt] (s1) at (5.5,0) {};
\node[fill=black,circle,inner sep=1pt] (s2) at (3.5,-1) {};
\node[fill=black,circle,inner sep=1pt] (s3) at (5.5,-1) {};
\node[fill=black,circle,inner sep=1pt] (s4) at (7.5,-2) {};
\node[fill=black,circle,inner sep=1pt] (s5) at (3,-2) {};
\node[fill=black,circle,inner sep=1pt] (s6) at (3.5,-2) {};
\node[fill=black,circle,inner sep=1pt] (s7) at (4,-2) {};
\node[fill=black,circle,inner sep=1pt] (s8) at (5,-2) {};
\node[fill=black,circle,inner sep=1pt] (s9) at (6,-2) {};
\node[fill=black,circle,inner sep=1pt] (s10) at (7,-3) {};
\node[fill=black,circle,inner sep=1pt] (s11) at (8,-3) {};
\node[fill=black,circle,inner sep=1pt] (s12) at (6.7,-4) {};
\node[fill=black,circle,inner sep=1pt] (s13) at (7,-4) {};
\node[fill=black,circle,inner sep=1pt] (s14) at (7.3,-4) {};
\node[fill=black,circle,inner sep=1pt] (s15) at (7.7,-4) {};
\node[fill=black,circle,inner sep=1pt] (s16) at (8,-4) {};
\node[fill=black,circle,inner sep=1pt] (s17) at (8.3,-4) {};
\draw (s1)--(s2);
\draw (s1)--(s3);
\draw (s1)--(s4);
\draw (s2)--(s5);
\draw (s2)--(s6);
\draw (s2)--(s7);
\draw (s3)--(s8);
\draw (s3)--(s9);
\draw (s4)--(s10);
\draw (s4)--(s11);
\draw (s10)--(s12);
\draw (s10)--(s13);
\draw (s10)--(s14);
\draw (s11)--(s15);
\draw (s11)--(s16);
\draw (s11)--(s17);
\node at (5.5,0.3) {$v_1$};
\node at (3.2,-1) {$v_3$};
\node at (5.8,-1) {$v_2$};
\node at (8.3,-3) {$v_4$};
\node at (6.7,-3) {$v_5$};
\draw[->, line width=1pt] (7.5,-1)--(8+1,-1);
\node[fill=black,circle,inner sep=1pt] (v1) at (11+1,0) {};
\node[fill=black,circle,inner sep=1pt] (v2) at (9+1,-1) {};
\node[fill=black,circle,inner sep=1pt] (v3) at (11+1,-1) {};
\node[fill=black,circle,inner sep=1pt] (v4) at (13.5+1,-2) {};
\node[fill=black,circle,inner sep=1pt] (v5) at (8.5+1,-2) {};
\node[fill=black,circle,inner sep=1pt] (v6) at (9+1,-2) {};
\node[fill=black,circle,inner sep=1pt] (v7) at (9.5+1,-2) {};
\node[fill=black,circle,inner sep=1pt] (v8) at (10.5+1,-2) {};
\node[fill=black,circle,inner sep=1pt] (v9) at (11.5+1,-2) {};
\node[fill=black,circle,inner sep=1pt] (v10) at (13+1,-3) {};
\node[fill=black,circle,inner sep=1pt] (v11) at (14+1,-3) {};
\node[fill=black,circle,inner sep=1pt] (v12) at (12.7+1,-4) {};
\node[fill=black,circle,inner sep=1pt] (v13) at (13+1,-4) {};
\node[fill=black,circle,inner sep=1pt] (v14) at (13.3+1,-4) {};
\node[fill=black,circle,inner sep=1pt] (v15) at (13.7+1,-4) {};
\node[fill=black,circle,inner sep=1pt] (v16) at (14+1,-4) {};
\node[fill=black,circle,inner sep=1pt] (v17) at (14.3+1,-4) {};
\node[fill=black,circle,inner sep=1pt] (v18) at (11.5+1,-3) {};
\node[fill=black,circle,inner sep=1pt] (v19) at (11+1,-4) {};
\node[fill=black,circle,inner sep=1pt] (v20) at (11.3+1,-4) {};
\node[fill=black,circle,inner sep=1pt] (v21) at (11.6+1,-4) {};
\node[fill=black,circle,inner sep=1pt] (v22) at (11.9+1,-4) {};
\draw (v1)--(v2);
\draw (v1)--(v3);
\draw (v1)--(v4);
\draw (v2)--(v5);
\draw (v2)--(v6);
\draw (v2)--(v7);
\draw (v3)--(v8);
\draw (v3)--(v9);
\draw (v4)--(v10);
\draw (v4)--(v11);
\draw (v10)--(v12);
\draw (v10)--(v13);
\draw (v10)--(v14);
\draw (v11)--(v15);
\draw (v11)--(v16);
\draw (v11)--(v17);
\draw (v9)--(v18);
\draw (v18)--(v19);
\draw (v18)--(v20);
\draw (v18)--(v21);
\draw (v18)--(v22);
\node at (11+1,0.3) {$v_1$};
\node at (8.7+1,-1) {$v_3$};
\node at (11.3+1,-1) {$v_2$};
\node at (14.3+1,-3) {$v_4$};
\node at (12.7+1,-3) {$v_5$};
\node at (11.2+1,-3) {$v_6$};
\node at (0-1,-5) {$\M(4,3,3)$};
\node at (5,-5) {$\M(4,4,4,3,3,3)$};
\node at (12+1,-5) {$\M(5,4,4,4,3,3,3,2)$};
\end{tikzpicture}

\caption{Step-by-step construction of $\M(5,4,4,4,3,3,3,2)$.}
\label{cha5.fig3}
\end{figure}
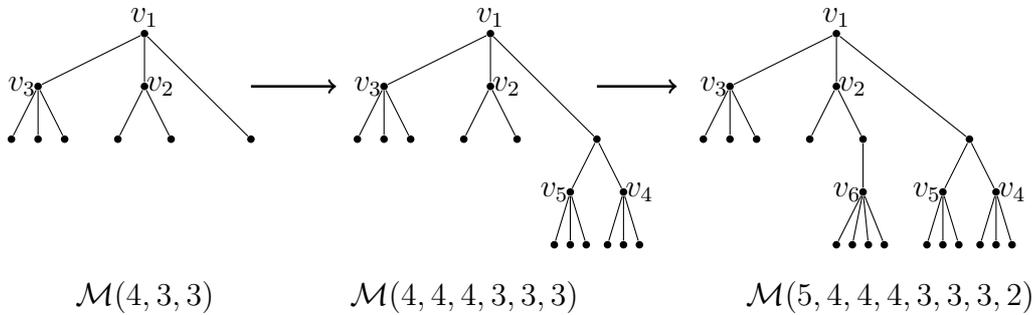
\end{defn}

We will show in this section that the $\M$-tree $\M(D)$ is the unique $\rho$-exchange-extremal tree with degree sequence $D$ if $\rho$ satisfies the following conditions:
\begin{enumerate}[label=\textbf{II.\arabic*}]
\item the quantity $\rho$ satisfies \eqref{cha5.eq1}, where the function $f_{\rho}$ is symmetric,\label{cond11}
\item the function $f_{\rho}$ is strictly decreasing (strictly decreasing in each single variable and strictly decreasing under addition of further variables),\label{cond21}
\item the quantity $\rho$ satisfies
$
\rho(\bullet)>\rho(B),
$
for all rooted trees $B$ with $|V(B)|>1$. \label{cond31}
\end{enumerate}

\subsection{Special case: the Hosoya index}\label{subsec:hosoya}
Let $T$ be a rooted tree, and recall that the Hosoya index, denoted by $z(T)$, is the total number of matchings of $T$. The number of matchings in $T$ that do not cover the root $\rr(T)$ will be denoted by $z_0(T)$. We consider the following ratio, which can be interpreted as the probability that a random matching does not cover the root: 

\begin{equation*}
\rho_3(T)=\frac{z_0(T)}{z(T)}.
\end{equation*}

If $T = [T_1,\dots,T_k]$, then it is not hard to see that
\[z_0(T) = \prod_{j=1}^k z(T_j)\]
and
\[z(T) = z_0(T) + \sum_{j=1}^k z_0(T_j) \prod_{\substack{i=1 \\ i \neq j}}^k z(T_j).\]
It follows that
\begin{equation}\label{eq:recrule3}
\rho_3(T) = \rho_3([T_1,T_2,\dots,T_k])=\frac{1}{1+\sum_{j=1}^k \rho_3(T_j)}.
\end{equation}

Note that this recurrence rule $f_{\rho_3}$ is symmetric and decreasing with respect to each of its variables and under addition of further variables. In addition, we obviously have $\rho_3(\bullet)=1 > \rho_3(B)$ for every rooted tree $B$ with more than one vertex. Thus~\ref{cond11},~\ref{cond21} and~\ref{cond31} are all satisfied.

\begin{lem}[cf.~\cite{Wagn}] 
Let $T$ be a tree with degree sequence $D$ for which $z(T)$ attains its maximum. For any two disjoint complete branches $A=[A_1,\dots,A_k]$ and $B=[B_1,\dots,B_{\ell}]$ in $T$, we have 

\begin{itemize}
\item either $k \geq \ell$ and $\min\{\rho_3(A_1),\dots,\rho_3(A_{k})\} \geq \max\{\rho_3(B_1),\dots,\rho_3(B_{\ell})\}$,

\item or $k \leq \ell$ and $\max\{\rho_3(A_1),\dots, \rho_3(A_k)\} \leq \min\{\rho_3(B_1),\dots,\rho_3(B_{\ell})\}.$
\end{itemize}
\end{lem}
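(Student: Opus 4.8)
The plan is to mimic almost verbatim the proof of Lemma~\ref{cha5.wie6} (the Wiener index case), since the Hosoya index is again of a ``product-like'' multiplicative form along the path connecting the two branches. First I would fix a decomposition $T=[A_1,\dots,A_k]vHw[B_1,\dots,B_\ell]$, equivalently isolate the path $P=\rr(A)u_1\cdots u_t\rr(B)$ between the roots of the two complete branches, and write $A=[A_1,\dots,A_k]$, $B=[B_1,\dots,B_\ell]$. The goal is to express $z(T)$ as a function of the quantities $\rho_3(A_i)$ and $\rho_3(B_j)$ in such a way that everything except one ``rearrangement-sensitive'' term is invariant under permuting the $A_i$s and $B_j$s and under swapping all $A_i$s with all $B_j$s (the latter corresponds to interchanging the degrees of $\rr(A)$ and $\rr(B)$, which preserves the degree sequence).

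The key computation is to condition on the matching restricted to the path $P$ together with the vertices $\rr(A)$ and $\rr(B)$. For each of the branches $A_i$, the relevant data is whether $\rr(A_i)$ is used by an edge going up to $\rr(A)$; a branch hanging off $\rr(A)$ contributes a factor $z(A_i)$ if $\rr(A)$ is covered within $A_i$ or left free, and a factor $z_0(A_i)$ if the edge $\rr(A)\rr(A_i)$ is in the matching. Summing over which branch (if any) supplies the edge at $\rr(A)$, the ``$\rr(A)$ free'' weight is $\prod_i z(A_i)$ and the ``$\rr(A)$ covered by a child edge'' weight is $\big(\sum_i z_0(A_i)/z(A_i)\big)\prod_i z(A_i) = \big(\sum_i \rho_3(A_i)\big)\prod_i z(A_i)$. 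Hence, after pulling out the common factor $\prod_i z(A_i)\prod_j z(B_j)$ (which is invariant under all the allowed rearrangements, being a plain product), $z(T)$ becomes a polynomial in $X=\sum_i \rho_3(A_i)$ and $Y=\sum_j \rho_3(B_j)$ whose coefficients depend only on $H$, $v$, $w$, the degrees of $\rr(A),\rr(B)$, and the number of edges in $P$. One checks — exactly as in the forest-count lemma in Section~\ref{subsec:rsf} — that this polynomial has the form $z(T)=\big(\prod_i z(A_i)\prod_j z(B_j)\big)\big(a+bX+bY+cXY\big)$ for suitable nonnegative $a,b,c$ coming from $H$: whether $\rr(A)$ is free or covered, and independently whether $\rr(B)$ is free or covered, where the cross term arises from matchings of $H\setminus\{\rr(A)\text{-edge},\rr(B)\text{-edge}\}$ and the symmetry in $X$ and $Y$ reflects the swap symmetry. (I would not grind out the exact expressions for $a,b,c$; the only facts needed are that they are nonnegative and independent of the rearrangement.)

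Now I invoke the concavity argument. The sum $\Sigma = X+Y = \sum_i\rho_3(A_i)+\sum_j\rho_3(B_j)$ is invariant under all allowed rearrangements, so writing $Y=\Sigma-X$ we must maximise $a+bX+b(\Sigma-X)+cX(\Sigma-X) = a+b\Sigma + cX(\Sigma-X)$, which is a strictly concave function of $X$ (since $c>0$ — this needs a brief check that $H$ admits at least one matching of the relevant type; if $c=0$ the term $cXY$ disappears and the quantity is rearrangement-invariant, so the conclusion is vacuous there). A strictly concave function on an interval attains its maximum only at an endpoint, i.e. when $X$ is as large or as small as possible. Translating ``$X=\sum_i\rho_3(A_i)$ extremal'': since the multiset $\{\rho_3(A_1),\dots,\rho_3(A_k),\rho_3(B_1),\dots,\rho_3(B_\ell)\}$ is fixed, $X$ is maximised by collecting the largest values (and as many as possible) among the $A_i$s, and minimised by putting them among the $B_j$s. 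This yields precisely the dichotomy: either $k\geq\ell$ with $\min_i\rho_3(A_i)\geq\max_j\rho_3(B_j)$, or $k\leq\ell$ with $\max_i\rho_3(A_i)\leq\min_j\rho_3(B_j)$ — the statement of the lemma.

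The main obstacle I anticipate is the bookkeeping in deriving the expression for $z(T)$: one must correctly account for the interaction along the path $P$ (matchings of the path with $\rr(A)$ and/or $\rr(B)$ in prescribed states), and verify that \emph{all} the path-and-$H$ contributions collapse into constants $a,b,c$ times $1, X, Y, XY$ with the same coefficient $b$ on $X$ and $Y$. The symmetry under swapping $A$ and $B$ is what forces the $X$ and $Y$ coefficients to coincide, and it is worth stating explicitly that the degree sequence is unchanged by this swap (only the degrees of $\rr(A)$ and $\rr(B)$ are exchanged). Once the formula is in hand, the concavity step is identical to Lemma~\ref{cha5.wie6} and is essentially automatic. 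In fact, given the near-identical structure, it would be legitimate to write the proof tersely: ``The proof proceeds exactly as in Lemma~\ref{cha5.wie6}, with $z(T)$ in place of $W(T)$; conditioning on the state of the matching along the path between $\rr(A)$ and $\rr(B)$ yields $z(T)=\big(\prod_i z(A_i)\prod_j z(B_j)\big)\big(a+b(X+Y)+cXY\big)$ with $X=\sum_i\rho_3(A_i)$, $Y=\sum_j\rho_3(B_j)$, and $a,b,c\geq 0$ depending only on the fixed part; strict concavity in $X$ (for $\Sigma=X+Y$ fixed) then forces $X$ to an extreme value, which is the claim.''
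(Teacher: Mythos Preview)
Your approach is exactly the right one and matches the template the paper uses for all its analogous lemmas (the paper itself does not prove this particular lemma, merely citing \cite{Wagn}). However, there are two genuine problems with the write-up, one of which is fatal as stated.

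\medskip
\textbf{The serious error.} You write: ``A strictly concave function on an interval attains its maximum only at an endpoint.'' This is false --- it is the \emph{minimum} of a strictly concave function that must occur at an endpoint. The source of the confusion is that the lemma statement itself contains a typo: it says ``for which $z(T)$ attains its \emph{maximum}'', but the intended statement (and the one the paper actually uses) is \emph{minimum}, as the very next sentence of the paper confirms: ``In other words, the tree with degree sequence $D$ that \emph{minimises} the Hosoya index is $\rho_3$-exchange-extremal.'' With ``minimum'' in place, your concavity argument goes through correctly: the strictly concave function $X\mapsto a + b_1 X + b_2(\Sigma - X) + cX(\Sigma - X)$ attains its minimum at an extreme value of $X$, which forces the claimed dichotomy.

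\medskip
\textbf{A minor inaccuracy.} Your claim that the expression has the symmetric form $a + bX + bY + cXY$ with the \emph{same} coefficient $b$ on $X$ and $Y$ is not correct. Carrying out the decomposition you outline, one gets
\[
z(T) = \Big(\prod_i z(A_i)\prod_j z(B_j)\Big)\Big(a + b_1 X + b_2 Y + cXY\Big),
\]
where $a = z(H)$, $c = z_0(H;v,w)$ (matchings of $H$ covering neither $v$ nor $w$), and $b_1, b_2$ are the numbers of matchings of $H$ not covering $w$, respectively not covering $v$. There is no reason for $b_1 = b_2$ unless $H$ happens to be symmetric in $v$ and $w$; the ``swap symmetry'' you invoke exchanges $X\leftrightarrow Y$ \emph{and} $b_1\leftrightarrow b_2$ simultaneously, so it does not force $b_1=b_2$. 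Fortunately this does not matter: the function is still strictly concave in $X$ (since $c = z_0(H;v,w) \geq 1 > 0$, the empty matching always being present), so the endpoint argument for the minimum is unaffected. Compare the rooted-spanning-forest lemma in Subsection~\ref{subsec:rsf}, where the analogous coefficients $b+d_1$ and $b+d_2$ are likewise allowed to differ.
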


In other words, the tree with degree sequence $D$ that minimises the Hosoya index is $\rho_3$-exchange-extremal. In addition, as a special case of Theorem~22 in \cite{Dadah1}, the following holds.

\begin{thm}[\cite{Dadah1}]
If $T$ is a $\rho_3$-exchange-extremal tree, then $T$ is an $\M$-tree.
\end{thm}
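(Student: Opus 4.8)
The plan is to follow the template of the increasing case: here the Hosoya-based quantity $\rho_3$ plays the role that the vertex count $\rho_0$ played for greedy trees, and the statement we want is the exact analogue of Theorem~\ref{cha5.thm0}; it is in fact the special case \cite[Theorem~22]{Dadah1}, which I would cite as such, the sketch below indicating how the argument goes. First I would record the elementary properties of $\rho_3$ coming from \eqref{eq:recrule3}: the recurrence rule $f_{\rho_3}$ is symmetric and strictly decreasing in each branch and under addition of branches, $\rho_3$ takes values in $(0,1]$ with $\rho_3(\bullet)=1$ its unique maximum, and a pseudo-leaf branch satisfies $\rho_3([d])=\tfrac1d$, so among pseudo-leaf branches a larger root degree means a strictly smaller value of $\rho_3$. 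With these in hand, $\rho_3$-exchange-extremality (Definition~\ref{cha5.defn1} with $\rho=\rho_3$) says that in any decomposition $T=[L_1,\dots,L_k]\,v\,H\,w\,[R_1,\dots,R_\ell]$ the branches with the largest $\rho_3$-values — the single vertices first of all — together with the more numerous side are all gathered at one end.

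The core of the proof is to promote this local exchange condition to the global alternating structure of Definition~\ref{Def:AltGreed}. Applying the exchange condition to decompositions in which $v$ and $w$ are adjacent internal vertices of $T$, I would first deduce that every complete branch hanging off the spine of $T$ must be a pseudo-leaf branch: a non-trivial, non-pseudo-leaf branch attached at an internal vertex would make it possible to redistribute a single vertex or a smaller sub-branch and thereby violate the inequalities required by Definition~\ref{cha5.defn1}. Once all off-spine branches are pseudo-leaf branches $[d]$, their $\rho_3$-values are totally ordered by $d$, and a further application of the exchange condition dictates how the large pseudo-leaf branches and the high-degree spine vertices are interleaved — this is precisely the large/small alternation defining the $\M$-tree.

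To organise this I would argue by induction on the length $t$ of the reduced degree sequence $(d_1,\dots,d_t)$ of $T$. The base case $t\le d_t+1$ is exactly the situation where the structure above reads $[[d_1],\dots,[d_{t-1}],[1],\dots,[1]]$, that is, $T=\M(d_1,\dots,d_t)$. For $t\ge d_t+2$, peeling off $R_{d_t}=[[d_1],\dots,[d_{d_t-1}]]$ leaves a tree that is again $\rho_3$-exchange-extremal and has reduced degree sequence $(d_{d_t},\dots,d_{t-1})$, so by induction it is $\M(d_{d_t},\dots,d_{t-1})$; the attachment vertex of $R_{d_t}$ is then the unique place left free by the tightness conditions in the remaining instances of the exchange property, giving $T=\M(d_1,\dots,d_t)$. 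Uniqueness of the $\rho_3$-exchange-extremal tree is built in, since $f_{\rho_3}$ is strictly decreasing and the constraints extracted along the way admit only the one solution.

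The main obstacle is the middle step. In the greedy case the slogan ``gather the largest branches, and as many of them as possible, at the root'' reproduces the greedy algorithm directly, because the ordering of branches induced by $\rho_0$ is simply the ordering by number of vertices. For $\rho_3$ this fails: the ordering it induces is not monotone in the number of vertices — for instance $\rho_3([2])=\tfrac12$ while a path on three vertices rooted at an endpoint has $\rho_3=\tfrac23$ — so one cannot simply conclude that the smallest branches migrate to the centre. One has to establish first that, in an extremal tree, all off-spine branches degenerate to pseudo-leaf branches, and only then compare their sizes; making this reduction rigorous is the technical heart of \cite[Theorem~22]{Dadah1}.
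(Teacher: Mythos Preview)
The paper itself does not prove this statement; it simply records it as a special case of Theorem~22 in \cite{Dadah1} and moves on. Your proposal --- to cite \cite{Dadah1} and supplement with a sketch that openly defers the hard reduction-to-pseudo-leaf-branches step to that reference --- therefore matches the paper's approach exactly.
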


\subsection{Main result}
In analogy to Theorem~\ref{cha5.thm}, we obtain the following result.

\begin{thm}\label{cha5.thmm}
Let $\rho$ be an invariant of rooted trees that satisfies conditions \ref{cond11},\ref{cond21} and \ref{cond31}. If a tree $T$ is $\rho$-exchange-extremal, then $T$ is an $\M$-tree.
\end{thm}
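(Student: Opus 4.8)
The plan is to run the argument of Theorem~\ref{cha5.thm} again, with the number-of-vertices invariant $\rho_0$ replaced throughout by the Hosoya-type invariant $\rho_3$ of Section~\ref{subsec:hosoya}. This is legitimate because, by~\eqref{eq:recrule3}, $\rho_3$ itself satisfies \ref{cond11}, \ref{cond21} and \ref{cond31}: its recurrence rule $f_{\rho_3}(t_1,\dots,t_k)=1/(1+t_1+\dots+t_k)$ is symmetric and strictly decreasing in each variable and under addition of further variables, and $\rho_3(\bullet)=1>\rho_3(B)$ whenever $|V(B)|>1$. So the first step is to prove the exact decreasing analogue of Lemma~\ref{lemthm}: \emph{if $\rho$ satisfies \ref{cond11}--\ref{cond31} and $T$ is $\rho$-exchange-extremal, then for any two disjoint complete branches $A$ and $B$ of $T$ we have $\rho(A)\ge\rho(B)$ if and only if $\rho_3(A)\ge\rho_3(B)$}. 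Granting this, $\rho$-exchange-extremality of $T$ passes to $\rho_3$: each of the two disjuncts in Definition~\ref{cha5.defn1} for a decomposition $T=[L_1,\dots,L_k]vHw[R_1,\dots,R_\ell]$ combines a comparison of the integers $k$ and $\ell$, which is untouched, with inequalities such as $\min_i\rho(L_i)\ge\max_j\rho(R_j)$, and the $L_i,R_j$ are disjoint complete branches of $T$, so applying the equivalence to every pair $L_i,R_j$ turns the $\rho$-disjunct into the same disjunct for $\rho_3$. It then only remains to invoke the theorem of~\cite{Dadah1} stating that every $\rho_3$-exchange-extremal tree is an $\M$-tree.

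The proof of this analogue of Lemma~\ref{lemthm} is an induction on $\max\{\h(A),\h(B)\}$, following Lemma~\ref{lemthm} almost verbatim, but with one directional change to keep in mind: since $f_\rho$ is now \emph{decreasing}, the configuration ``$k\ge\ell$ and $\min_i\rho(A_i)\ge\max_j\rho(B_j)$'' yields $\rho(A)\le\rho(B)$, not $\ge$. Indeed, writing $m=\min_i\rho(A_i)$, the fact that $f_\rho$ is strictly decreasing in each variable and under addition squeezes $\rho(A)$ below the value of $f_\rho$ at $\ell$ copies of $m$, which in turn lies below $\rho(B)$; running the identical chain with $\rho_3$ in place of $\rho$ gives $\rho_3(A)\le\rho_3(B)$. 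Hence, in this configuration, the hypothesis $\rho(A)\ge\rho(B)$ forces equality throughout, so $k=\ell$ and $\rho(A_1)=\dots=\rho(A_k)=\rho(B_1)=\dots=\rho(B_\ell)$; the induction hypothesis turns this into the same equalities for $\rho_3$, whence $\rho_3(A)=\rho_3(B)$. In the other configuration, ``$k\le\ell$ and $\max_i\rho(A_i)\le\min_j\rho(B_j)$'', the same monotonicity gives $\rho(A)\ge\rho(B)$ with no contradiction, and the induction hypothesis together with the analogous chain for $\rho_3$ gives $\rho_3(A)\ge\rho_3(B)$ directly. The converse implication is obtained by the same case analysis with the roles of $\rho$ and $\rho_3$ interchanged.

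The degenerate cases also differ from the increasing setting, because now the one-vertex tree \emph{maximises} both $\rho$ and $\rho_3$. If $A=\bullet$ there is nothing to prove, since $\rho_3(A)=1\ge\rho_3(B)$; if $B=\bullet$, then $\rho(A)\ge\rho(B)=\rho(\bullet)$ together with \ref{cond31} forces $A=\bullet$ as well, and again $\rho_3(A)=\rho_3(B)$. So in the inductive step one may assume $|V(A)|,|V(B)|>1$ and decompose $A=[A_1,\dots,A_k]$, $B=[B_1,\dots,B_\ell]$; each $A_i$ and $B_j$ is then a disjoint complete branch of $T$ of strictly smaller height, so the induction hypothesis applies to every pair. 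The main thing to get right, and the place where sign errors are easiest to make, is exactly this bookkeeping of monotonicity directions and forced equalities; the other point worth checking is that the decomposition $T=[A_1,\dots,A_k]\,\rr(A)\,H'\,\rr(B)\,[B_1,\dots,B_\ell]$ used to apply $\rho$-exchange-extremality of $T$ is available for arbitrary disjoint complete branches $A,B$ (take $H'$ to be $T$ with the sub-branches of $A$ and of $B$ deleted, which leaves $\rr(A)$ and $\rr(B)$ as two distinct leaves of $H'$). Everything else transcribes directly from the proofs of Lemma~\ref{lemthm} and Theorem~\ref{cha5.thm}.
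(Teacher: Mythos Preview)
Your proposal is correct and is precisely the argument the paper has in mind: the paper's own proof consists of the single sentence that one repeats the proof of Theorem~\ref{cha5.thm} (i.e., Lemma~\ref{lemthm} followed by the reduction to the known base case) with $\rho_0$ replaced by $\rho_3$ and the monotonicity directions reversed, and you have spelled out exactly those modifications. The only phrasing to tighten is your last clause about the converse (``roles of $\rho$ and $\rho_3$ interchanged''): the case split must still come from $\rho$-exchange-extremality of $T$, not from any $\rho_3$-extremality, but the converse then follows either by rerunning your two cases with the hypothesis $\rho_3(A)\ge\rho_3(B)$ in place of $\rho(A)\ge\rho(B)$, or simply by noting that the forward implication together with its strict version (which your equality analysis also yields) gives the full equivalence.
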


The proof of Theorem~\ref{cha5.thmm} is essentially identical to that of  Theorem~\ref{cha5.thm}, with minor modifications to take into account that $f_{\rho}$ and $f_{\rho_3}$ are decreasing and $\rho(\bullet)$ and $\rho_3(\bullet)$ are maxima.
The rest of this section will be devoted to applications of Theorem~\ref{cha5.thmm}.

\subsection{Matching polynomial and energy}
Let $T$ be a rooted tree. Denote by $\m(T,k)$ the number of $k$-matchings in $T$ and by $\m_0(T,k)$ the number of $k$-matchings not containing the root. We write $\Ma(T,x) = \sum_{k \geq 0} \m(T,k)x^k$ and $\Ma_0(T,x) = \sum_{k \geq 0} \m_0(T,k)x^k$ for the generating polynomials corresponding to $\m$ and $\m_0$ respectively. Moreover, we define the following ratio for $x > 0$:
\begin{equation*}
\tau(T,x)=\frac{\Ma_0(T,x)}{\Ma(T,x)}.
\end{equation*}

\begin{lem}[\cite{Dadah1}]
Let $x>0$, and let $T$ be a tree such that $\Ma(T,x)\leq \Ma(T',x)$ for every tree $T'$ with the same degree sequence. For any two disjoint complete branches $A=[A_1,\dots,A_k]$ and $B=[B_1,\dots,B_{\ell}]$ in $T$, we have 
\begin{itemize}
\item either $k \geq \ell$ and 
$\min\{\tau(A_j,x): 1 \leq j \leq k\} \geq \max\{\tau(B_j,x): 1 \leq j \leq \ell\},$

\item or $k \leq \ell$ and
$\max\{\tau(A_j,x): 1 \leq j \leq k\} \leq \min\{\tau(B_j,x): 1 \leq j \leq \ell\}.$
\end{itemize}
\end{lem}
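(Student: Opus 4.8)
The plan is to mimic the structure of the proof of Lemma~\ref{cha5.wie6} (and the analogous argument for rooted spanning forests), adapting it to the matching polynomial $\Ma(\cdot,x)$. First I would fix a decomposition $T=[A_1,\dots,A_k]vHw[B_1,\dots,B_{\ell}]$ coming from the two disjoint complete branches $A=[A_1,\dots,A_k]$ and $B=[B_1,\dots,B_\ell]$, and write down a closed expression for $\Ma(T,x)$ by classifying the $k$-matchings of $T$ according to whether the edge of $H$ incident to $v$ (resp.\ $w$) that would lead into the branch is used, or more precisely according to whether $v$ and $w$ are covered by a matching edge lying inside the attached branches $A_i$ or $B_j$. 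The standard deletion rule for matching polynomials gives, for each branch $A_i$ regarded as rooted, the two relevant quantities $\Ma(A_i,x)$ and $\Ma_0(A_i,x)$; attaching $A_i$ at $v$ contributes a factor $\Ma(A_i,x)$ if the edge from $v$ into $A_i$ is forbidden to be used (because $v$ is already covered elsewhere) and a factor $x\,\Ma_0(A_i,x)$ if that edge is used. Collecting terms, I expect $\Ma(T,x)$ to take the form
\[
\Ma(T,x)=\Big(\prod_{i=1}^k\Ma(A_i,x)\Big)\Big(\prod_{j=1}^{\ell}\Ma(B_j,x)\Big)\Big[P + Q\sum_{i=1}^k x\,\tau(A_i,x) + R\sum_{j=1}^{\ell} x\,\tau(B_j,x) + S\Big(\sum_{i=1}^k x\,\tau(A_i,x)\Big)\Big(\sum_{j=1}^{\ell} x\,\tau(B_j,x)\Big)\Big],
\]
where $P,Q,R,S$ are polynomials in $x$ built from appropriate matching counts of $H$ (with $v,w$ or the edges at them deleted) and hence are invariant under permutations of the $A_i$s and $B_j$s, and where $\tau(A_i,x)=\Ma_0(A_i,x)/\Ma(A_i,x)$.

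The key observation, exactly as in Lemma~\ref{cha5.wie6}, is that the prefactor $\prod_i\Ma(A_i,x)\prod_j\Ma(B_j,x)$ and the total sum
\[
\Sigma=\sum_{i=1}^k x\,\tau(A_i,x)+\sum_{j=1}^{\ell} x\,\tau(B_j,x)
\]
are both unaffected by permuting the $A_i$s and $B_j$s among themselves and by swapping the two blocks (which swaps the degrees of $v$ and $w$ but keeps the degree sequence of $T$). Writing $y=\sum_{i=1}^k x\,\tau(A_i,x)$, the bracketed expression becomes $P+Qy+R(\Sigma-y)+Sy(\Sigma-y)$, a quadratic in $y$ with leading coefficient $-S$. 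Since $S$ is a positive quantity for $x>0$ (it counts matchings of $H$ avoiding both $v$ and $w$, with a strictly positive weight — one should check $S>0$, using that the empty matching always contributes), this is strictly concave in $y$, so under the admissible rearrangements $\Ma(T,x)$ is minimised only when $y$ is as large or as small as possible. Minimality of $\Ma(T,x)$ among trees with the given degree sequence therefore forces $y$ to be extremal, which — since the $x\tau(A_i,x)$ and $x\tau(B_j,x)$ are just the individual summands of the fixed multiset $\Sigma$ — means all the largest values of $\tau$ must be gathered in the larger of the two blocks. Translating: either $k\ge\ell$ and $\min_i\tau(A_i,x)\ge\max_j\tau(B_j,x)$, or $k\le\ell$ and $\max_i\tau(A_i,x)\le\min_j\tau(B_j,x)$, which is precisely the claim.

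The main obstacle I anticipate is the bookkeeping in the case analysis that produces the explicit formula for $\Ma(T,x)$: one must carefully enumerate the possibilities for how $v$ and $w$ interact with $H$ (both covered within $H$, one covered within $H$ and the other via an attached branch, both via attached branches, and the subtleties of whether $v$ and $w$ are adjacent in $H$ or the path between them passes through other vertices), and verify that after collecting terms the coefficients $P,Q,R,S$ genuinely depend only on $H$ and not on the arrangement of branches. A secondary point to be careful about is confirming $S>0$ so that the concavity argument is strict — this is what ultimately yields the dichotomy rather than a weaker conclusion. Once the formula is in hand, the remainder of the argument is a verbatim repetition of the concavity reasoning from Lemma~\ref{cha5.wie6}.
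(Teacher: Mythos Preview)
The paper does not supply its own proof of this lemma; it is simply quoted from \cite{Dadah1}. Your outline is correct and is exactly the template the paper uses for the parallel results on the Wiener index (Lemma~\ref{cha5.wie6}) and on rooted spanning forests. Concretely, classifying matchings of $T$ according to whether $v$ (resp.\ $w$) is covered by an edge into one of the attached branches gives the formula you anticipate with
\[
P=\Ma(H,x),\quad Q=\Ma_0^{\,v}(H,x),\quad R=\Ma_0^{\,w}(H,x),\quad S=\Ma_0^{\,v,w}(H,x),
\]
where $\Ma_0^{\,v}(H,x)$, $\Ma_0^{\,w}(H,x)$, $\Ma_0^{\,v,w}(H,x)$ count matchings of $H$ not covering $v$, not covering $w$, and covering neither, respectively. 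The worry about $v$ and $w$ being adjacent is unnecessary: these four quantities are well defined for any tree $H$ with distinguished leaves $v,w$, and the formula holds uniformly without a separate case analysis. Since the empty matching always contributes, $S\geq 1>0$, so the bracket is strictly concave in $y$ and the rest of your argument goes through verbatim.
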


The tree that minimises the matching polynomial $\Ma(T,x)$ for some $x>0$ is therefore $\tau(.,x)$-exchange-extremal. 
In addition, we have the following generalisation of~\eqref{eq:recrule3}: if $T = [T_1,T_2,\ldots,T_k]$, then

\[
\tau(T,x)=\frac{1}{1+x\sum_{j=1}^k \tau(T_j,x)}.
\]

The recurrence rule $f_{\rho}$, with $\rho(T)=\tau(T,x)$, is thus symmetric and decreasing with respect to each of its variables and under addition of further variables. In addition, $\tau(\bullet,x)=1$ is the unique maximum among all rooted trees for every $x$. Thus the following result from~\cite{Dadah1} can be seen as a special case of Theorem~\ref{cha5.thmm}:

\begin{thm}[\cite{Dadah1}]
Let $x$ be a fixed positive real number. If $T$ is a tree with degree sequence $D$, then
\[\Ma(T,x)\geq \Ma(\M(D),x),\]
and equality holds only if $T$ is isomorphic to $\M(D)$.
\end{thm}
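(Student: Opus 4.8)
The plan is to obtain the theorem as an immediate consequence of the exchange lemma stated just above, together with the structural result Theorem~\ref{cha5.thmm}, once we have checked that the ratio $\tau(\cdot,x)$ fits the ``decreasing'' framework of Section~\ref{recdec}. Since there are only finitely many trees with a given degree sequence $D$, I would first fix any tree $T$ with degree sequence $D$ that minimises $\Ma(\cdot,x)$; such a $T$ certainly exists. By the preceding lemma from \cite{Dadah1}, this minimiser $T$ is $\tau(\cdot,x)$-exchange-extremal.

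Next I would verify that $\rho=\tau(\cdot,x)$ satisfies \ref{cond11}, \ref{cond21} and \ref{cond31}. Condition~\ref{cond11} is exactly the displayed recurrence $\tau(T,x)=\bigl(1+x\sum_{j=1}^k\tau(T_j,x)\bigr)^{-1}$, whose recurrence rule $f_{\rho}(y_1,\dots,y_k)=\bigl(1+x\sum_{i=1}^k y_i\bigr)^{-1}$ is visibly symmetric. Because $x>0$ and $\tau(T_i,x)>0$ for every branch $T_i$, increasing any argument or adjoining a further argument strictly decreases $f_{\rho}$, which is \ref{cond21}; and $\tau(\bullet,x)=1$ whereas $\tau(B,x)=\bigl(1+x\sum_i\tau(B_i,x)\bigr)^{-1}<1$ whenever $B$ has more than one vertex, which is \ref{cond31}. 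With all three conditions in hand, Theorem~\ref{cha5.thmm} applies to the $\tau(\cdot,x)$-exchange-extremal tree $T$ and shows that $T$ is an $\M$-tree; since $T$ has degree sequence $D$ and $\M(D)$ is the unique $\M$-tree with that degree sequence, $T\cong\M(D)$. Hence $\Ma(\M(D),x)=\min\{\Ma(T',x):T'\text{ has degree sequence }D\}$, which is the asserted inequality. For the equality statement, any tree $T'$ with degree sequence $D$ satisfying $\Ma(T',x)=\Ma(\M(D),x)$ is itself a minimiser, so the argument just given forces $T'\cong\M(D)$.

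The substantive content is entirely contained in the two ingredients being invoked: the exchange lemma (whose proof parallels that of Lemma~\ref{cha5.wie6}, expanding $\Ma(T,x)$ along the path between the two branches and using strict concavity of the resulting expression in a single parameter) and the structural Theorem~\ref{cha5.thmm} (the decreasing analogue of Theorem~\ref{cha5.thm}). Given those, the only step in the present proof that requires genuine attention is the verification that $\tau(\cdot,x)$ really satisfies \ref{cond11}--\ref{cond31}; the one place where positivity of $x$ is essential is in deducing that $f_{\rho}$ is \emph{strictly} decreasing, and hence in the uniqueness of the extremal tree. Everything else is a formal deduction, so I would not expect any serious obstacle here.
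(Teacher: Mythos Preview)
Your proposal is correct and follows essentially the same route as the paper: verify that $\tau(\cdot,x)$ satisfies \ref{cond11}--\ref{cond31}, invoke the cited exchange lemma to see that any minimiser is $\tau(\cdot,x)$-exchange-extremal, and then apply Theorem~\ref{cha5.thmm}. The only minor inaccuracy is your parenthetical remark that the exchange lemma's proof uses ``strict concavity'' as in Lemma~\ref{cha5.wie6}; in fact the matching-polynomial case is handled via a product-type rearrangement argument closer in spirit to Lemma~\ref{Lem:RearrIneq}, but since you are citing that lemma rather than reproving it, this does not affect your argument.
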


In view of the connection between the energy and the matching polynomial (namely the Coulson formula~\eqref{Eq:EnTree}), we also get the following corollary.

\begin{cor}[\cite{Dadah1}]
Given the degree sequence $D$ of a tree, the energy is minimised by the $\M$-tree $\M(D)$.
\end{cor}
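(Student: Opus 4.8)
The plan is to obtain this corollary as a formal consequence of the theorem on the matching polynomial proved just above, combined with the Coulson integral formula \eqref{Eq:EnTree}. Recall that for every tree $T$ the energy can be written as
\[
\En(T)=\frac{2}{\pi}\int_{0}^{\infty}\frac{1}{x^2}\log\Big(\sum_{k\geq 0}\m(T,k)x^{2k}\Big)\,dx=\frac{2}{\pi}\int_{0}^{\infty}\frac{1}{x^2}\log\Ma(T,x^2)\,dx,
\]
so that $\En$ is a monotone functional of the numbers $\Ma(T,y)$, $y>0$.

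First I would fix a tree $T$ with degree sequence $D$ and invoke the preceding theorem: for every $y>0$ one has $\Ma(T,y)\geq\Ma(\M(D),y)$. Taking $y=x^2$ and using that $t\mapsto\log t$ is strictly increasing on $(0,\infty)$ gives the pointwise bound $\log\Ma(T,x^2)\geq\log\Ma(\M(D),x^2)$ for every $x>0$. Next I would multiply by the strictly positive weight $x^{-2}$ and integrate over $(0,\infty)$; by monotonicity of the integral this yields
\[
\En(T)=\frac{2}{\pi}\int_{0}^{\infty}\frac{\log\Ma(T,x^2)}{x^2}\,dx\geq\frac{2}{\pi}\int_{0}^{\infty}\frac{\log\Ma(\M(D),x^2)}{x^2}\,dx=\En(\M(D)),
\]
which is exactly the claim. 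This mirrors the deduction of the incidence energy result from Corollary~\ref{cha5.coro}.

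The only point that needs a word of care is that the comparison is between two finite quantities, i.e.\ that the Coulson integral converges for every tree; this is classical and follows from $\Ma(T,x^2)=1+O(x^2)$ as $x\to 0$ together with polynomial growth as $x\to\infty$. Beyond that there is essentially no obstacle. If uniqueness is wanted as well, note that the matching-polynomial theorem gives the \emph{strict} inequality $\Ma(T,x^2)>\Ma(\M(D),x^2)$ for all $x>0$ whenever $T\not\cong\M(D)$; since $\log$ is strictly increasing and the weight $x^{-2}$ is strictly positive, this propagates to a strict inequality of the integrals, so $\M(D)$ is in fact the unique minimiser.
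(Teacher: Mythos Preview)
Your proof is correct and follows exactly the approach the paper indicates: apply the pointwise inequality $\Ma(T,x)\geq\Ma(\M(D),x)$ from the preceding theorem inside the Coulson formula~\eqref{Eq:EnTree} and integrate, just as was done for the incidence energy via Corollary~\ref{cha5.coro}. The paper itself does not spell out the details beyond citing the Coulson formula, so your write-up is in fact more explicit (including the remark on uniqueness), but the underlying argument is the same.
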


\subsection{Merrifield-Simmons index}\label{subsec:ms}
Recall that the number of independent sets in $T$ is also called the Merrifield-Simmons index and denoted by $\sigma(T)$. Let $\sigma_0(T)$ be the number of independent sets of $T$ that do not contain the root, and define $\rho_4(T)=\sigma(T)/\sigma_0(T)$. The following lemma from \cite{Dadah1} states that the tree that maximises the number of independent sets among all trees with a given degree sequence $D$ has to be $\rho_4$-exchange-extremal. 

\begin{lem}[\cite{Dadah1}]
Let $T$ be a tree with degree sequence $D$ for which $\sigma(T)$ attains its maximum. Then, for any two disjoint complete branches $A=[A_1,\dots,A_k]$ and $B=[B_1,\dots,B_{\ell}]$ in $T$, we have 

\begin{itemize}
\item either
$k \geq \ell$ and 
$\min\{\rho_4(A_j): 1 \leq j \leq k\} \geq \max\{\rho_4(B_j): 1 \leq j \leq \ell\},$

\item or $k \leq \ell$ and
$\max\{\rho_4(A_j): 1 \leq j \leq k\} \leq \min\{\rho_4(B_j): 1 \leq j \leq \ell\}.$
\end{itemize}
\end{lem}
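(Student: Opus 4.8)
The plan is to follow the same template as the proof of Lemma~\ref{cha5.wie6} for the Wiener index (and of Lemma~\ref{lemsub} for the number of subtrees): fix a decomposition of $T$ along the two branches, expand $\sigma(T)$ as a product of an arrangement-invariant factor with a function that is convex in a single real parameter controlled by the rearrangement, and then read off the extremal condition.

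Concretely, I would write $T = [A_1,\dots,A_k]vHw[B_1,\dots,B_{\ell}]$ with $v=\rr(A)$ and $w=\rr(B)$, so that $v$ and $w$ are leaves of $H$. Classify the independent sets $S$ of $T$ according to whether each of $v,w$ lies in $S$: the trace of $S$ on $H$ is an independent set of $H$, and once the status of $v$ is fixed, the trace of $S$ on $A_i$ is an arbitrary independent set of $A_i$ if $v\notin S$ and one avoiding $\rr(A_i)$ if $v\in S$ (similarly for $w$ and the $B_j$). Writing $h_{ab}$, $a,b\in\{0,1\}$, for the number of independent sets of $H$ whose intersection with $\{v,w\}$ is prescribed by $(a,b)$, this yields
\begin{align*}
\sigma(T) &= h_{00}\prod_{i=1}^{k}\sigma(A_i)\prod_{j=1}^{\ell}\sigma(B_j) + h_{10}\prod_{i=1}^{k}\sigma_0(A_i)\prod_{j=1}^{\ell}\sigma(B_j) \\
&\quad + h_{01}\prod_{i=1}^{k}\sigma(A_i)\prod_{j=1}^{\ell}\sigma_0(B_j) + h_{11}\prod_{i=1}^{k}\sigma_0(A_i)\prod_{j=1}^{\ell}\sigma_0(B_j).
\end{align*}
Dividing by $P:=\prod_{i=1}^{k}\sigma(A_i)\prod_{j=1}^{\ell}\sigma(B_j)>0$ and using $\sigma_0(C)/\sigma(C)=1/\rho_4(C)$, then setting $\Pi:=\prod_{i=1}^{k}\rho_4(A_i)\prod_{j=1}^{\ell}\rho_4(B_j)$ and $u:=\prod_{i=1}^{k}\rho_4(A_i)$ (so that $\prod_{j=1}^{\ell}1/\rho_4(B_j)=u/\Pi$), I would rewrite this as
\[
\sigma(T) = P\Big( h_{00} + \frac{h_{11}}{\Pi} + \frac{h_{10}}{u} + \frac{h_{01}}{\Pi}\,u \Big).
\]

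The key point is then that the degree sequence is unchanged under permuting all $k+\ell$ branches $A_1,\dots,A_k,B_1,\dots,B_{\ell}$ among the slots at $v$ and $w$ and/or swapping the two sides (interchanging the degrees of $v$ and $w$)---exactly the operations used in Lemma~\ref{cha5.wie6}. Under all of these, $P$, $\Pi$ and the numbers $h_{ab}$ remain fixed (they depend only on $H$, $v$, $w$), while $u$ ranges over the products $\prod_{C\in X}\rho_4(C)$ with $X$ a sub-multiset of $\{A_1,\dots,A_k,B_1,\dots,B_{\ell}\}$ of size $k$ or $\ell$. Since $\{v\}$ is an independent set of $H$ not containing $w$, we have $h_{10}\ge 1$, so $g(u):=h_{00}+h_{11}/\Pi+h_{10}/u+(h_{01}/\Pi)u$ is strictly convex on $(0,\infty)$. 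As $T$ maximises $\sigma$, the value of $u$ attained by $T$ must maximise $g$ over this finite set of admissible values, hence must be the largest or the smallest admissible value.

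Finally I would translate this into the statement, using $\rho_4(C)>1$ for every branch $C$ (the singleton containing the root is always independent). Since multiplying a product of factors $>1$ by a further such factor strictly increases it, the largest admissible $u$ is the product of the $\max(k,\ell)$ largest of the numbers $\rho_4(A_i),\rho_4(B_j)$; if the $u$ of $T$ equals it, one is forced into $k\ge\ell$ with $\{\rho_4(A_i)\}$ equal to the $k$ largest among them, i.e.\ $\min_i\rho_4(A_i)\ge\max_j\rho_4(B_j)$. The case of the smallest admissible $u$ is symmetric and gives $k\le\ell$ with $\max_i\rho_4(A_i)\le\min_j\rho_4(B_j)$. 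Strict convexity of $g$ additionally gives the matching uniqueness statement (outside the degenerate case $k=\ell$ with all branch values equal), which is what is needed afterwards, together with the recurrence $\rho_4([T_1,\dots,T_k])=1+\big(\prod_{i}\rho_4(T_i)\big)^{-1}$ and Theorem~\ref{cha5.thmm}, to conclude that the extremal tree is $\M(D)$. The only slightly delicate part is the bookkeeping of the admissible rearrangements and verifying that $P$, $\Pi$ and the $h_{ab}$ are genuinely invariant under them; once that is set up, the convexity argument is immediate, just as for the Wiener index.
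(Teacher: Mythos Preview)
Your proof is correct and follows exactly the template used in the paper for the analogous lemmas (Lemma~\ref{cha5.wie6} for the Wiener index and Lemma~\ref{lemsub} for subtrees): decompose $\sigma(T)$ according to the membership of $v$ and $w$, factor out the rearrangement-invariant quantities $P$ and $\Pi$, and reduce to a strict convexity argument in the single variable $u=\prod_i\rho_4(A_i)$; the extremal values of $u$ are then handled precisely as in Lemma~\ref{Lem:RearrIneq}, using $\rho_4(C)>1$. The paper itself does not reprove this lemma (it is cited from~\cite{Dadah1}), but your argument is the natural adaptation of the paper's method and would be fully acceptable here.
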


The recurrence rule $f_{\rho_4}$ corresponding to $\rho_4$ is found in a similar way to~\eqref{eq:recrule3}, and is given by
\[
\rho_4([T_1,T_2,\dots,T_k])= 1+\prod_{j=1}^k \rho_4(T_j)^{-1}.
\]
Note that it is decreasing with respect to each of its variables (and under addition of further variables), and $\rho_4(\bullet) = 2$ is easily seen to be maximal among all trees. Hence we can deduce the following theorem as a consequence of Theorem~\ref{cha5.thmm}.

\begin{thm}[\cite{Dadah1}]
Given a degree sequence $D$, the Merrifield-Simmons index is maximised by $\M(D)$.
\end{thm}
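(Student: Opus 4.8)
The plan is to recognise this as a direct instantiation of the machinery already built up in Section~\ref{recdec}. The statement to prove is that $\M(D)$ maximises the Merrifield--Simmons index among all trees with degree sequence $D$. The strategy has exactly the same two-part shape as in the analogous results for the Hosoya index and the matching polynomial: first show that an extremal tree is $\rho_4$-exchange-extremal (this is the content of the lemma quoted from~\cite{Dadah1} immediately above), and then feed this into Theorem~\ref{cha5.thmm} to conclude that the extremal tree must be $\M(D)$.

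Concretely, I would proceed as follows. First, verify that $\rho_4$ satisfies conditions~\ref{cond11},~\ref{cond21} and~\ref{cond31}. Condition~\ref{cond11} is the recurrence relation $\rho_4([T_1,\dots,T_k]) = 1 + \prod_{j=1}^k \rho_4(T_j)^{-1}$, which is symmetric in the $\rho_4(T_i)$; this follows from the standard recursions $\sigma([T_1,\dots,T_k]) = \prod_j \sigma(T_j) + \prod_j \sigma_0(T_j)$ and $\sigma_0([T_1,\dots,T_k]) = \prod_j \sigma(T_j)$, which in turn come from deciding whether the root lies in the independent set or not. For condition~\ref{cond21}, observe that each $\rho_4(T_j) = \sigma(T_j)/\sigma_0(T_j) > 1$ (the empty set and every independent set avoiding the root both contribute, and there is always at least one independent set containing the root), so $\prod_j \rho_4(T_j)^{-1}$ is a strictly decreasing function of each $\rho_4(T_j)$ and strictly decreases when a further factor (which is itself less than $1$) is appended; hence $f_{\rho_4}$ is strictly decreasing in the required sense. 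For condition~\ref{cond31}, note $\rho_4(\bullet) = 2$, while for any $B$ with $|V(B)| > 1$ one has $\rho_4(B) = 1 + \prod_j \rho_4(B_j)^{-1} < 2$ since each factor is $< 1$.

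With the three conditions in place, the quoted lemma from~\cite{Dadah1} gives that the tree $T$ attaining the maximum of $\sigma$ among trees with degree sequence $D$ is $\rho_4$-exchange-extremal. Theorem~\ref{cha5.thmm}, applied with $\rho = \rho_4$, then immediately yields that $T$ must be the $\M$-tree $\M(D)$, which is the desired conclusion. (One should also note that $\M(D)$ is itself $\rho_4$-exchange-extremal, or invoke the uniqueness built into Theorem~\ref{cha5.thmm}, to get that $\M(D)$ is genuinely extremal and not merely that the extremal tree is isomorphic to it.)

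I do not expect any serious obstacle here: the paper has deliberately been structured so that each application reduces to checking the three structural conditions on the chosen $\rho$ and then citing the exchange-extremality lemma plus the main theorem. The only place requiring a moment of care is the verification that $f_{\rho_4}$ is \emph{strictly} decreasing and that $\rho_4(\bullet)$ is the \emph{strict} maximum — both hinge on the elementary fact that $\rho_4(B) > 1$ always and $\rho_4(B) < 2$ whenever $B$ is not a single vertex, which is what makes the product of reciprocals behave monotonically. Everything else is bookkeeping that has already been done in the surrounding subsections.
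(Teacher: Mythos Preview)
Your proposal is correct and follows exactly the route the paper takes: verify that $\rho_4$ satisfies \ref{cond11}, \ref{cond21}, \ref{cond31}, invoke the quoted lemma to obtain $\rho_4$-exchange-extremality of the maximiser, and then apply Theorem~\ref{cha5.thmm}. If anything, you spell out the verification of the three conditions in more detail than the paper does.
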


\subsection{Solvability}\label{subsec:solv}

Let $G$ be a graph and $v$ a vertex in $G$. The open neighbourhood of $v$, denoted $N(v)$, is the set $\{u \in V(G):uv \in E(G) \}$, and the closed neighbourhood of $v$ is $N[v]=N(v)\cup \{v\}$.

Let $\mathbb{F}_2$ be the field with two elements. The \textit{solvability} of $G$, introduced in \cite{Hatzl} and denoted $s(G)$, is the number of pairs $(a,b) \in \mathbb{F}_2^{V(G)}\times\mathbb{F}_2^{V(G)}$ such that there exists a vector $x\in  \mathbb{F}_2^{V(G)}$  that satisfies
\begin{equation}
(A+\diag(a))x=b,\label{cha5.sol}
\end{equation}
where $A$ is the adjacency matrix of $G$ and $\diag(a)$ is the diagonal matrix whose diagonal is $a$.

This can be interpreted as a domination problem with parity constraints. We are looking for a set $S$ of vertices satisfying, for each of the vertices of the graph, one of four possible conditions: the open/closed neighbourhood has to contain an even/odd number of vertices in $S$. Here, $a$ encodes the open/closed neighbourhood condition and $b$ encodes the required parity. The solvability of a graph measures how many instances of the problem have solutions. It turns out that the solvability of trees can be calculated by a recursion involving a second auxiliary quantity $t$ associated with rooted trees, as stated in the following lemma.

\begin{lem}[\cite{Hatzl}]\label{cha5.sol1}
Let $T$ be a rooted tree. If $T = [T_1,T_2,\ldots,T_k]$, then
\begin{align*}
s(T)&=8\prod_{i=1}^k s(T_i)-5\prod_{i=1}^kt(T_i),\\
t(T)&=8\prod_{i=1}^k s(T_i)-6\prod_{i=1}^kt(T_i),
\end{align*}
with initial values $s(\bullet)=3$ and $t(\bullet)=2$.
\end{lem}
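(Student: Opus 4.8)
\emph{Proof strategy.} The plan is to derive both recursions directly from the definition of solvability, using $t(T)$ as an auxiliary quantity described as follows. Call a pair $(a,b)\in\mathbb{F}_{2}^{V(T)}\times\mathbb{F}_{2}^{V(T)}$ \emph{solvable} if $(A+\diag(a))x=b$ has a solution $x$, so $s(T)$ counts the solvable pairs (and for fixed $a$ the number of $b$ with $(a,b)$ solvable is $|\operatorname{im}(A+\diag(a))|=2^{\operatorname{rank}(A+\diag(a))}$). I take $t(T)$ to be the number of solvable pairs $(a,b)$ for which the value $x_{r}$ at the root $r:=\rr(T)$ is the same in every solution. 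Since $A+\diag(a)$ is symmetric over $\mathbb{F}_{2}$, its image equals the orthogonal complement of its kernel, so this last condition is equivalent to $e_{r}\in\operatorname{im}(A+\diag(a))$, where $e_{r}$ is the $r$-th unit vector. For $T=\bullet$, a direct check of the four pairs $(a,b)$ shows that three are solvable and that the root value is forced in two of them, which matches $s(\bullet)=3$, $t(\bullet)=2$.

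For the recursion, write $T=[T_{1},\dots,T_{k}]$ with $r_{i}:=\rr(T_{i})$, let $A_{i}$ be the adjacency matrix of $T_{i}$, and decompose a pair $(a,b)$ on $T$ into root data $(a_{r},b_{r})$ and pairs $(a^{(i)},b^{(i)})$ on the $T_{i}$. If a solution $x$ has $x_{r}=\xi$, then $x$ restricted to $T_{i}$ solves $(A_{i}+\diag(a^{(i)}))x^{(i)}=b^{(i)}+\xi e_{r_{i}}$ and the equation at $r$ reads $\sum_{i}x^{(i)}_{r_{i}}=b_{r}+a_{r}\xi$. Letting $S_{\xi}$ be the set of pairs admitting a solution with $x_{r}=\xi$, it follows that $(a,b)\in S_{\xi}$ iff each $(a^{(i)},b^{(i)}+\xi e_{r_{i}})$ is solvable on $T_{i}$ and the root values $x^{(i)}_{r_{i}}$ of their solutions --- a single forced value when that $T_{i}$-pair has forced root value, either of $0,1$ otherwise --- can be made to sum to $b_{r}+a_{r}\xi$. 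Now a pair is solvable iff it lies in $S_{0}\cup S_{1}$, and its root value is forced iff it lies in exactly one of $S_{0},S_{1}$; hence $s(T)=|S_{0}\cup S_{1}|$ and $t(T)=|S_{0}\triangle S_{1}|$. The involution $(a,b)\mapsto\bigl(a,b+(A+\diag(a))e_{r}\bigr)$, realised on solutions by $x\mapsto x+e_{r}$, maps $S_{0}$ bijectively onto $S_{1}$, so $|S_{0}|=|S_{1}|$ and therefore $s(T)=2|S_{0}|-|S_{0}\cap S_{1}|$, $t(T)=2\bigl(|S_{0}|-|S_{0}\cap S_{1}|\bigr)$.

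It remains to evaluate $|S_{0}|$ and $|S_{0}\cap S_{1}|$. Taking $\xi=0$: the sub-pairs are just the $(a^{(i)},b^{(i)})$, which must all be solvable ($\prod_{i}s(T_{i})$ tuples); $a_{r}$ is then unconstrained; and the attainable values of $\sum_{i}x^{(i)}_{r_{i}}$ exhaust $\{0,1\}$ unless every sub-pair has forced root value, which holds for exactly $\prod_{i}t(T_{i})$ of the solvable tuples. Counting the compatible $(a_{r},b_{r})$ --- namely $2\cdot2$ generically and $2\cdot1$ in the exceptional case --- yields $|S_{0}|=4\prod_{i}s(T_{i})-2\prod_{i}t(T_{i})$. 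For $|S_{0}\cap S_{1}|$, the crucial input is that $M_{i}:=A_{i}+\diag(a^{(i)})$ is symmetric over $\mathbb{F}_{2}$, so a solvable pair $(a^{(i)},b^{(i)})$ stays solvable after flipping its $b$-entry at $r_{i}$ precisely when $e_{r_{i}}\in\operatorname{im}(M_{i})$, i.e.\ precisely when the root value on $T_{i}$ is forced. Thus every pair of $S_{0}\cap S_{1}$ has all $(a^{(i)},b^{(i)})$ solvable with forced root value, and in that situation the two root equations ($\xi=0$ and $\xi=1$) determine $b_{r}$ and then $a_{r}$ uniquely; hence $|S_{0}\cap S_{1}|=\prod_{i}t(T_{i})$. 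Substituting gives $s(T)=2\bigl(4\prod s(T_{i})-2\prod t(T_{i})\bigr)-\prod t(T_{i})=8\prod s(T_{i})-5\prod t(T_{i})$ and $t(T)=2\bigl(4\prod s(T_{i})-2\prod t(T_{i})\bigr)-2\prod t(T_{i})=8\prod s(T_{i})-6\prod t(T_{i})$.

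I expect the evaluation of $|S_{0}\cap S_{1}|$ to be the main obstacle: one has to determine exactly when flipping the $b$-entry at a child-root preserves solvability of the corresponding subtree system, and the clean criterion --- ``iff the root value is forced'', equivalently $e_{r_{i}}\in\operatorname{im}(M_{i})$ --- rests on the self-adjointness of $A_{i}+\diag(a^{(i)})$ over $\mathbb{F}_{2}$, which is also what lets one identify $|S_{0}\cap S_{1}|$ with $\prod_{i}t(T_{i})$. The remaining bookkeeping is routine and is shortened by the root-flipping involution, which yields $|S_{0}|=|S_{1}|$ and so halves the work.
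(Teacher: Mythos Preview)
The paper does not prove this lemma; it is quoted from \cite{Hatzl} with no argument given, so there is nothing in the paper to compare your proof against. That said, your argument is correct and self-contained: the interpretation of $t(T)$ as the number of solvable pairs with forced root value is the right auxiliary quantity, the involution $(a,b)\mapsto (a,b+(A+\diag(a))e_{r})$ cleanly gives $|S_{0}|=|S_{1}|$, and the key step---that both $(a^{(i)},b^{(i)})$ and $(a^{(i)},b^{(i)}+e_{r_{i}})$ are solvable iff $(a^{(i)},b^{(i)})$ is solvable with forced root value---is exactly the symmetry observation $\ker(M_{i})^{\perp}=\operatorname{im}(M_{i})$ over $\mathbb{F}_{2}$. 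One point you use implicitly and might make explicit: in the count of $|S_{0}\cap S_{1}|$, the root values for the $\xi=1$ subsystems $(a^{(i)},b^{(i)}+e_{r_{i}})$ are also forced, since ``forced'' depends only on $a^{(i)}$; this is what guarantees the two root equations pin down both $b_{r}$ and $a_{r}$ uniquely. With that remark your proof is complete.
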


We remark that the value of the parameter $t$ generally depends on the choice of root, unlike $s$.
Note also that $s(T)$ and $t(T)$ are both positive, and that $s(T) > t(T)$ holds for every rooted tree $T$.

It was shown in \cite{Hatzl} that the path has the greatest solvability among all trees (in fact all graphs) of a given order, while the star has the least solvability, but no other extremal results for the solvability of trees are available. Let us show that our general results apply to the solvability as well. To this end, define $\rho_5(T)$ to be the ratio $\rho_5(T)=\frac{s(T)}{t(T)} > 1$. If $T$ can be decomposed as $[T_1,\dots,T_k]$, then by Lemma~\ref{cha5.sol1} we have:

\begin{equation}
\rho_5(T)=\frac{8\prod_{i=1}^k\rho_5(T_i)-5}{8\prod_{i=1}^k\rho_5(T_i)-6}.\label{cha5.sol4}
\end{equation}

Note that this is a symmetric and decreasing recurrence rule, so $\rho_5$ satisfies conditions~\ref{cond11} and~\ref{cond21}. Moreover,~\ref{cond31} is also satisfied by~\eqref{cha5.sol4}, since
\[\rho_5(T)= 1 + \frac{1}{8\prod_{i=1}^k\rho_5(T_i)-6} < 1 + \frac{1}{8-6} = \frac32 = \rho_5(\bullet)\]
for every tree rooted tree $T$ with more than one vertex. Let us now prove the main lemma of this subsection.

\begin{lem}\label{cha5.sol}
Let $T$ be a tree with degree sequence $D$ for which $s(T)$ attains its minimum. Then, for any two disjoint complete branches $A=[A_1,\dots,A_k]$ and $B=[B_1,\dots,B_{\ell}]$ in $T$, we have 
\begin{itemize}
\item either $k \geq \ell$ and $\min\{\rho_5(A_j): 1 \leq j \leq k\} \geq \max\{\rho_5(B_j): 1 \leq j \leq \ell\},$

\item or $k \leq \ell$ and $\max\{\rho_5(A_j): 1 \leq j \leq k\} \leq \min\{\rho_5(B_j): 1 \leq j \leq \ell\}.$
\end{itemize}
\end{lem}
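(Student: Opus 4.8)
The plan is to follow the same strategy as the proof of Lemma~\ref{cha5.wie6}: express $s(T)$ as the product of a quantity that is \emph{invariant} under all degree-sequence-preserving rearrangements of the branches $A_1,\dots,A_k,B_1,\dots,B_\ell$ and a function of a single real parameter that is \emph{strictly concave}, so that the minimum of $s(T)$ can only occur when that parameter is extremal, i.e.\ exactly when the branches with largest $\rho_5$-value are gathered together. Fix the decomposition $T=[A_1,\dots,A_k]vHw[B_1,\dots,B_\ell]$, regard $T$ as rooted at $v$, and set $S_A=\prod_{i=1}^k s(A_i)$, $T_A=\prod_{i=1}^k t(A_i)$, $S_B=\prod_{j=1}^\ell s(B_j)$, $T_B=\prod_{j=1}^\ell t(B_j)$. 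Writing $\hat H$ for the subtree of $T$ rooted at the unique neighbour of $v$ in $H$, Lemma~\ref{cha5.sol1} gives $s(T)=8\,S_A\,s(\hat H)-5\,T_A\,t(\hat H)$.

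Next I would descend the path from $v$ to $w$ inside $\hat H$, applying Lemma~\ref{cha5.sol1} at each step. A straightforward induction shows that
\[s(\hat H)=\gamma_1 S_B-\gamma_2 T_B,\qquad t(\hat H)=\delta_1 S_B-\delta_2 T_B,\]
where $\gamma_1,\gamma_2,\delta_1,\delta_2$ are positive reals depending only on $H$: the base case (the subtree rooted at $w$, whose children are $\rr(B_1),\dots,\rr(B_\ell)$) gives $\gamma_1=\delta_1=8$, $\gamma_2=5$, $\delta_2=6$, and each step upward multiplies by the positive products of $s(\cdot)$ and $t(\cdot)$ over the remaining children of the current path vertex, keeping all four coefficients positive because $s>t>0$ for rooted trees. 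Substituting, we get
\[s(T)=8\gamma_1 S_AS_B-8\gamma_2 S_AT_B-5\delta_1 T_AS_B+5\delta_2 T_AT_B.\]

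Now I would observe that $T_AT_B$ equals the product of $t(\cdot)$ over all $k+\ell$ branches, and that $pq:=\big(S_A/T_A\big)\big(S_B/T_B\big)$ equals the product of $\rho_5(\cdot)$ over all $k+\ell$ branches; since each branch retains its own root, both quantities are unchanged under any redistribution of $A_1,\dots,A_k,B_1,\dots,B_\ell$ into two groups of sizes $k$ and $\ell$ (or $\ell$ and $k$), that is, under all rearrangements preserving the degree sequence. Writing $p=S_A/T_A=\prod_{i}\rho_5(A_i)$ and $P=pq$ (a constant), division by the invariant $T_AT_B$ yields $s(T)=T_AT_B\cdot h(p)$ with
\[h(p)=8\gamma_1P-8\gamma_2 p-5\delta_1\,P/p+5\delta_2,\]
so $h''(p)=-10\delta_1Pp^{-3}<0$ and $h$ is strictly concave on $(0,\infty)$. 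As $T$ minimises $s$ and every such rearrangement of $T$ again has degree sequence $D$, the value of $p$ realised in $T$ must minimise $h$ over the finite set of achievable values, hence by strict concavity it is the largest or the smallest of them. The largest such value is the product of the $\max(k,\ell)$ largest of the numbers $\rho_5(A_1),\dots,\rho_5(A_k),\rho_5(B_1),\dots,\rho_5(B_\ell)$ and the smallest is the product of the $\min(k,\ell)$ smallest of them; since all these numbers exceed $1$, a product of exactly $k$ of them can equal the former only if $k\ge\ell$ — and then $\{A_1,\dots,A_k\}$ consists of the $k$ branches with largest $\rho_5$-value, giving $\min_i\rho_5(A_i)\ge\max_j\rho_5(B_j)$ — and can equal the latter only if $k\le\ell$ — and then $\{A_1,\dots,A_k\}$ consists of the $k$ branches with smallest $\rho_5$-value, giving $\max_i\rho_5(A_i)\le\min_j\rho_5(B_j)$. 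This is precisely the statement of the lemma.

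The main obstacle is the inductive step along the path that produces the linear forms for $s(\hat H)$ and $t(\hat H)$ with coefficients of the correct sign; once this is in place, the invariance of $T_AT_B$ and $pq$, the concavity computation, and the extremal-product argument run exactly as in Lemma~\ref{cha5.wie6}. In fact only $\delta_1>0$ is strictly needed (to make $h$ concave), and this already follows from the fact that $t(\cdot)$ of a rooted tree is strictly increasing in the $s$-value of each of its branches.
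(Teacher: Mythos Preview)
Your route is essentially the paper's: decompose along the $v$--$w$ path, express $s(T)$ as $T_AT_B$ times an expression of the form $\text{const}-\alpha\,p-\beta\,P/p$ (with $P=pq$ invariant), and use concavity to force $p$ to be an extreme achievable product. The paper packages the path recursion as a $2\times 2$ matrix product and then invokes Lemma~\ref{Lem:RearrIneq}, but your scalar recursions for $\gamma_1,\gamma_2,\delta_1,\delta_2$ are the same object in different notation; in fact $\gamma_2=5M_{00}+6M_{01}$ and $\delta_1=8(M_{10}+M_{11})$, so the paper's $\alpha,\beta$ are exactly $8\gamma_2$ and $5\delta_1$.

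There is one genuine gap, namely the sign verification for the path coefficients. The update at a path vertex $u_j$ with side-branch products $S_j=\prod_i s(U_j^i)$, $T_j=\prod_i t(U_j^i)$ is
\[
\gamma_1'=8S_j\gamma_1-5T_j\delta_1,\qquad
\delta_1'=8S_j\gamma_1-6T_j\delta_1,\qquad
\gamma_2'=8S_j\gamma_2-5T_j\delta_2,\qquad
\delta_2'=8S_j\gamma_2-6T_j\delta_2,
\]
so each new coefficient is a \emph{difference} involving two old ones, not a product with positive factors; the inequality $s>t>0$ alone does not force positivity. Your closing remark that ``$t(\cdot)$ is strictly increasing in the $s$-value of each of its branches'' only concerns \emph{immediate} children and does not propagate through the alternating signs along the path. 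An auxiliary invariant is needed. For example, one can maintain $\gamma_1\ge\delta_1>0$ through the induction: $\gamma_1'-\delta_1'=T_j\delta_1>0$, and $\delta_1'\ge(8S_j-6T_j)\delta_1>0$ since $S_j\ge T_j>0$. The paper does the equivalent bookkeeping in matrix language, carrying the invariants $5M_{00}+6M_{01}>0$, $5M_{10}+6M_{11}>0$, $M_{01}\le 0$, $M_{11}\le 0$ through the induction on the path length. Once such an invariant is supplied, your concavity argument goes through and yields the stated conclusion.
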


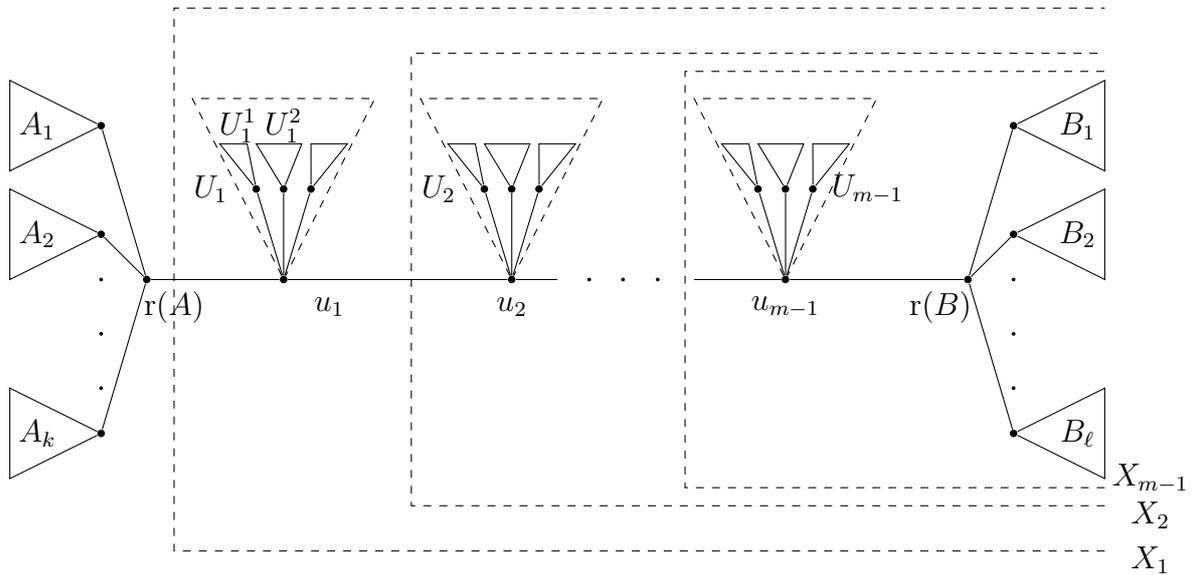
\begin{figure}[H]
\centering
\begin{tikzpicture}[scale=1.2]
\node[fill=black,circle,inner sep=1pt]  (a1) at (0,0.5) {};
\node[fill=black,circle,inner sep=1pt]  (a2) at (0,1.7) {};
\node[fill=black,circle,inner sep=1pt]  (a3) at (0,-1.7) {};
\node[fill=black,circle,inner sep=1pt]  (b1) at (10,0.5) {};
\node[fill=black,circle,inner sep=1pt]  (b2) at (10,1.7) {};
\node[fill=black,circle,inner sep=1pt]  (b3) at (10,-1.7) {};
\node[fill=black,circle,inner sep=1pt]  (u0) at (0.5,0) {};
\node[fill=black,circle,inner sep=1pt]  (um) at (9.5,0) {};
\node[fill=black,circle,inner sep=1pt]  (u1) at (2,0) {};
\node[fill=black,circle,inner sep=1pt]  (u2) at (4.5,0) {};
\node[fill=black,circle,inner sep=1pt]  (um1) at (7.5,0) {};
\node[fill=black,circle,inner sep=1pt]  (v1) at (1.7,1) {};
\node[fill=black,circle,inner sep=1pt]  (v2) at (2,1) {};
\node[fill=black,circle,inner sep=1pt]  (v3) at (2.3,1) {};
\node[fill=black,circle,inner sep=1pt]  (w1) at (4.2,1) {};
\node[fill=black,circle,inner sep=1pt]  (w2) at (4.5,1) {};
\node[fill=black,circle,inner sep=1pt]  (w3) at (4.8,1) {};
\node[fill=black,circle,inner sep=1pt]  (t1) at (7.2,1) {};
\node[fill=black,circle,inner sep=1pt]  (t2) at (7.5,1) {};
\node[fill=black,circle,inner sep=1pt]  (t3) at (7.8,1) {};
\node[fill=black,circle,inner sep=0.5pt]  () at (0,0) {};
\node[fill=black,circle,inner sep=0.5pt]  () at (0,-0.6) {};
\node[fill=black,circle,inner sep=0.5pt]  () at (0,-1.2) {};
\node[fill=black,circle,inner sep=0.5pt]  () at (10,0) {};
\node[fill=black,circle,inner sep=0.5pt]  () at (10,-0.6) {};
\node[fill=black,circle,inner sep=0.5pt]  () at (10,-1.2) {};
\node[fill=black,circle,inner sep=0.5pt]  () at (5.35,0) {};
\node[fill=black,circle,inner sep=0.5pt]  () at (5.72,0) {};
\node[fill=black,circle,inner sep=0.5pt]  () at (6.1,0) {};
\draw (u0)--(u1)--(u2);
\draw (um1)--(um);
\draw (u2)--(5,0);
\draw (um1)--(6.5,0);

\draw[dashed] (u1)--(1,2)--(3,2)-- (u1);
\draw[dashed] (u2)--(3.5,2)--(5.5,2)-- (u2);
\draw[dashed] (um1)--(6.5,2)--(8.5,2)-- (um1);

\draw (u1)--(v1);
\draw (u1)--(v2);
\draw (u1)--(v3);
\draw (v1)--(1.3,1.5)--(1.6,1.5)-- (v1);
\draw (v2)--(1.7,1.5)--(2.2,1.5)-- (v2);
\draw (v3)--(2.3,1.5)--(2.7,1.5)-- (v3);

\draw (u2)--(w1);
\draw (u2)--(w2);
\draw (u2)--(w3);
\draw (w1)--(3.8,1.5)--(4.1,1.5)-- (w1);
\draw (w2)--(4.2,1.5)--(4.7,1.5)-- (w2);
\draw (w3)--(4.8,1.5)--(5.2,1.5)-- (w3);

\draw (um1)--(t1);
\draw (um1)--(t2);
\draw (um1)--(t3);
\draw (t1)--(6.8,1.5)--(7.1,1.5)-- (t1);
\draw (t2)--(7.2,1.5)--(7.7,1.5)-- (t2);
\draw (t3)--(7.8,1.5)--(8.2,1.5)-- (t3);

\draw (a1)--(-1,1)--(-1,0)-- (a1);
\draw (a3)--(-1,-2.2)--(-1,-1.2)-- (a3);
\draw (a2)--(-1,2.2)--(-1,1.2)-- (a2);

\draw (b1)--(11,0)--(11,1)--(b1);
\draw (b3)--(11,-2.2)--(11,-1.2)--(b3);
\draw (b2)--(11,2.2)--(11,1.2)--(b2);

\draw (a1)--(u0)--(a2);
\draw (u0)--(a3);

\draw[dashed] (11,-3)--(0.8,-3)--(0.8,3)--(11,3);
\draw[dashed] (11,-2.5)--(3.4,-2.5)--(3.4,2.5)--(11,2.5);
\draw[dashed] (11,-2.3)--(6.4,-2.3)--(6.4,2.3)--(11,2.3);

\draw (b1)--(um)--(b2);
\draw (um)--(b3);
\node at (2.5,-0.3) {$u_1$};
\node at (4.5,-0.3) {$u_2$};
\node at (7.5,-0.3) {$u_{m-1}$};
\node at (1.2,1) {$U_1$};
\node at (3.7,1) {$U_2$};
\node at (8.4,1) {$U_{m-1}$};
\node at (1.5,1.7) {$U_1^1$};
\node at (2,1.7) {$U_1^2$};
\node at (9.2,-0.3) {$\rr(B)$};
\node at (0.8,-0.3) {$\rr(A)$};
\node at (-0.7,1.7) {$A_1$};
\node at (-0.7,0.5) {$A_2$};
\node at (-0.7,-1.7) {$A_{k}$};
\node at (10.7,1.7) {$B_1$};
\node at (10.7,0.5) {$B_2$};
\node at (10.7,-1.7) {$B_{\ell}$};
\node at (11.5,-3.1) {$X_1$};
\node at (11.5,-2.6) {$X_2$};
\node at (11.5,-2.2) {$X_{m-1}$};
\end{tikzpicture}

\caption{Decomposition of $T$ in the proof of Lemma~\ref{cha5.sol}.}
\label{cha5.figsol}
\end{figure}

\begin{proof}
Let $P=u_0u_1\dots u_m$ be the path between $\rr(A)=u_0$ and $\rr(B)=u_m$. For $1 \leq j<m$, let $U_j=[U_j^1,\dots,U_j^{\rrd(U_j)}]$ be the component containing $u_j$ (rooted at $u_j$) when we remove all the edges of the path $P$. We also denote by $X_j$ the component containing $u_j$ in $T-u_{j-1}u_j$, rooted at $u_j$. We consider the tree $T$ as rooted at $\rr(A)$. Using Lemma~\ref{cha5.sol1}, we can write $s(T)$ and $t(T)$ in terms of matrices as follows:

\begin{align*}
\begin{pmatrix}
s(T)\\t(T)
\end{pmatrix}
=\begin{pmatrix}
8\prod_{i=1}^ks(A_i)&-5\prod_{i=1}^kt(A_i)\\8\prod_{i=1}^ks(A_i)&-6\prod
_{i=1}^kt(A_i)\end{pmatrix}
\begin{pmatrix}
s(X_1)\\t(X_1)
\end{pmatrix}.
\end{align*}

Moreover, for $1 \leq j<m$, we have
\begin{align*}
\begin{pmatrix}
s(X_j)\\t(X_j)
\end{pmatrix}
=\begin{pmatrix}
8\prod_{i=1}^{\rrd(U_j)}s(U_j^i)&-5\prod_{i=1}^{\rrd(U_j)}t(U_j^i)\\
8\prod_{i=1}^{\rrd(U_j)}s(U_j^i)&-6\prod
_{i=1}^{\rrd(U_j)}t(U_j^i)\end{pmatrix}\begin{pmatrix}
s(X_{j+1})\\t(X_{j+1})
\end{pmatrix}.
\end{align*}

Let us denote the $2 \times 2$-matrix on the right side of this equation by $M_j$. Empty products (if $\rrd(U_j) = 0$) are defined to be $1$, which is consistent with the recursion. Then we have

\begin{align*}
\begin{pmatrix}
s(T)\\t(T)
\end{pmatrix}
=\begin{pmatrix}
8\prod_{i=1}^ks(A_i)&-5\prod_{i=1}^kt(A_i)\\8\prod_{i=1}^ks(A_i)&-6\prod
_{i=1}^kt(A_i)\end{pmatrix}
M
\begin{pmatrix}
s(B)\\t(B)
\end{pmatrix},
\end{align*}
where $M=M_0\times M_1\times M_2 \times \cdots \times M_{m-1}$, $M_0$ being the identity matrix. If we set $M=\begin{pmatrix}
M_{00}&M_{01}\\M_{10}&M_{11}
\end{pmatrix}$, then we can write the solvability of $T$ as

\begin{align*}
s(T)&=\prod_{i=1}^ks(A_i)\prod_{i=1}^{\ell}s(B_i)(64M_{00}+64M_{01}) - \prod_{i=1}^ks(A_i)\prod_{i=1}^{\ell}t(B_i)(40M_{00}+48M_{01})\\
&\quad-\prod_{i=1}^kt(A_i)\prod_{i=1}^{\ell}s(B_i)(40M_{10}+40M_{11})+\prod_{i=1}^kt(A_i)\prod_{i=1}^{\ell}t(B_i)(25M_{10}+30M_{11})\\
&=\prod_{i=1}^kt(A_i)\prod_{i=1}^{\ell}t(B_i)\Big[(64M_{00}+64M_{01})\prod_{i=1}^k\rho_5(A_i)\prod_{i=1}^{\ell}\rho_5(B_i)-
(40M_{00}+48M_{01})\prod_{i=1}^{k}\rho_5(A_i)\\
&\quad-(40M_{10}+40M_{11})\prod_{i=1}^{\ell}\rho_5(B_i)+(25M_{10}+30M_{11})\Big].
\end{align*}

Since $\prod_{i=1}^kt(A_i)\prod_{i=1}^{\ell}t(B_i)$ and $(64M_{00}+64M_{01})\prod_{i=1}^k\rho_5(A_i)\prod_{i=1}^{\ell}\rho_5(B_i) + (25M_{10}+30M_{11})$ remain invariant under any rearrangements of the $A_i$s and $B_i$s, $s(T)$ attains its minimum under permutations of the $A_i$s and $B_i$s if and only if
\[\alpha\prod_{i=1}^{k}\rho_5(A_i)+\beta\prod_{i=1}^{\ell}\rho_5(B_i)\]
attains its maximum, where $\alpha=40M_{00}+48M_{01}$ and $\beta=40M_{10}+40M_{11}$.
 
Next we show that $\alpha$ and $\beta$ are positive. For $m=1$, $M$ is equal to the identity matrix. So $\alpha=\beta=40>0$.

Now, for $m>1$, let us prove by induction on $m$ that $5M_{00}+6M_{01} > 0$, $5M_{10}+6M_{11}>0$ and $M_{01},M_{11}\leq 0$. Note that the positivity of $5M_{00}+6M_{01}$ implies the positivity of $\alpha$ since $\alpha=8(5M_{00}+6M_{01})$, and the positivity of $5M_{10}+6M_{11}$, combined with the inequality $M_{11} \leq 0$, implies the positivity of $\beta$ since $\beta=8(5M_{10}+5M_{11})\geq 8(5M_{10}+6M_{11})$. Let us write 
\[\begin{bmatrix} M_{00}(m) & M_{01}(m) \\ M_{10}(m) & M_{11}(m) \end{bmatrix} =  
M(m) = M_0 \times M_1 \times \cdots \times M_{m-1}.\]
For $m=2$, we have, since $s(U_1^i)>t(U_1^i)> 0$ for all $i$, 
\begin{align*}
M_{01}(2)&=-5\prod_{i=1}^{\rrd(U_1)}t(U_1^i)\leq 0,\quad
M_{11}(2)=-6\prod_{i=1}^{\rrd(U_1)}t(U_1^i)\leq 0,\\
&5M_{00}(2)+6M_{01}(2)=40\prod_{i=1}^{\rrd(U_1)}s(U_1^i)
-30\prod_{i=1}^{\rrd(U_1)}t(U_1^i)>0,\\
&5M_{10}(2)+6M_{11}(2)=40\prod_{i=1}^{\rrd(U_1)}s(U_1^i)
-36\prod_{i=1}^{\rrd(U_1)}t(U_1^i)>0.
\end{align*}
Suppose now that the statement is true for $m =d$. For the induction step, we take $m=d+1$, and $M(d+1)=M_0\times M_1 \times \cdots \times M_{d}=M(d) \times M_{d}$. We have

\begin{align*}
M_{00}(d+1)&=  (8M_{00}(d)+8M_{01}(d))\prod_{i=1}^{\rrd(U_{d+1})}s(U_{d+1}^i),\\
M_{01}(d+1)&=-(5M_{00}(d)+6M_{01}(d))\prod_{i=1}^{\rrd(U_{d+1})}t(U_{d+1}^i),\\
M_{10}(d+1)&=(8M_{10}(d)+8M_{11}(d))\prod_{i=1}^{\rrd(U_{d+1})}s(U_{d+1}^i),\\
M_{11}(d+1)&=-(5M_{10}(d)+6M_{11}(d))\prod_{i=1}^{\rrd(U_{d+1})}t(U_{d+1}^i).
\end{align*}

By the induction hypothesis, we have $M_{01}(d),M_{11}(d)\leq 0$, $5M_{00}(d)+6M_{01}(d) > 0$ and $5M_{10}(d)+6M_{11}(d)>0$. For all $i$, $s(U_{d+1}^i) > t(U_{d+1}^i) > 0$, so
\[M_{01}(d+1) \leq 0, \text{ and } M_{11}(d+1) \leq 0.\]
Moreover, we get

\begin{align*}
&5M_{00}(d+1)+6M_{01}(d+1)\\
&=8 \cdot (5M_{00}(d)+5M_{01}(d))\prod_{i=1}^{\rrd(U_{d+1})}s(U_{d+1}^i)-6 \cdot (5M_{00}(d)+6M_{01}(d))\prod_{i=1}^{\rrd(U_{d+1})}t(U_{d+1}^i)\\
&\geq \left(5M_{00}(d)+6M_{01}(d)\right)\left(8\prod_{i=1}^{\rrd(U_{d+1})}s(U_{d+1}^i)-6\prod_{i=1}^{\rrd(U_{d+1})}t(U_{d+1}^i)\right)>0
\end{align*}
and
\begin{align*}
&5M_{10}(d+1)+6M_{11}(d+1)\\
&=8 \cdot (5M_{10}(d)+5M_{11}(d))\prod_{i=1}^{\rrd(U_{d+1})}s(U_{d+1}^i)-6 \cdot (5M_{10}(d)+6M_{11}(d))\prod_{i=1}^{\rrd(U_{d+1})}t(U_{d+1}^i)\\
&\geq \left(5M_{10}(d)+6M_{11}(d)\right)\left(8\prod_{i=1}^{\rrd(U_{d+1})}s(U_{d+1}^i)-6\prod_{i=1}^{\rrd(U_{d+1})}t(U_{d+1}^i)\right) >0.
\end{align*}
This completes our induction. Since we know now that $\alpha$ and $\beta$ are positive, and that $\rho_5(A_1),\dots,\rho_5(A_k),\rho_5(B_1),\dots,\rho_5(B_{\ell})$ are all greater than $1$ by definition, Lemma~\ref{Lem:RearrIneq} applies and shows that $s(T)$ can only be minimal if
\begin{itemize}
\item $\alpha \geq \beta$, $k \geq \ell$ and 
$\min\{\rho_5(A_j): 1 \leq j \leq k\} \geq \max\{\rho_5(B_j): 1 \leq j \leq \ell\}$,
\item or $\alpha \leq \beta$, $k \leq \ell$ and
$\max\{\rho_5(A_j): 1 \leq j \leq k\} \leq \min\{\rho_5(B_j): 1 \leq j \leq \ell\}$.
\end{itemize}
This completes the proof.
\end{proof}

So a tree that minimises the solvability, given the degree sequence, must be $\rho_5$-exchange-extremal. Applying Theorem~\ref{cha5.thmm}, we immediately get the following theorem.

\begin{thm}
Given a degree sequence of a tree $D$, the solvability is minimised by $\M(D)$.
\end{thm}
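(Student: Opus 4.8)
The plan is to combine the exchange-extremality lemma already established for the solvability with the general structural theorem for decreasing recurrence rules. First I would note that, since there are only finitely many trees with a prescribed degree sequence $D$, the solvability $s(\cdot)$ attains a minimum over this finite set at some tree $T$, so it suffices to identify $T$. By Lemma~\ref{cha5.sol}, every such minimiser $T$ is $\rho_5$-exchange-extremal.

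Next I would record that $\rho_5$ satisfies the three hypotheses of Theorem~\ref{cha5.thmm}. The recursion \eqref{cha5.sol4} exhibits the recurrence rule $f_{\rho_5}$ as a symmetric function of the branch values $\rho_5(T_1),\dots,\rho_5(T_k)$, which is \ref{cond11}. Rewriting it as $\rho_5(T)=1+\bigl(8\prod_{i=1}^k\rho_5(T_i)-6\bigr)^{-1}$ and using that each $\rho_5(T_i)>1$, one checks at once that $f_{\rho_5}$ is strictly decreasing in each of its arguments and strictly decreasing under the addition of a further argument, giving \ref{cond21}; the same expression, together with $\prod_{i=1}^k\rho_5(T_i)>1$ whenever $|V(T)|>1$, shows $\rho_5(T)<\tfrac32=\rho_5(\bullet)$, so the maximum of $\rho_5$ is uniquely attained by the single-vertex tree, which is \ref{cond31}. (All of this is already observed in the text preceding Lemma~\ref{cha5.sol}.)

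With \ref{cond11}, \ref{cond21} and \ref{cond31} in hand, Theorem~\ref{cha5.thmm} applies to the $\rho_5$-exchange-extremal tree $T$ and forces $T=\M(D)$. As the minimiser of $s(\cdot)$ among trees with degree sequence $D$ is thereby unique up to isomorphism, $\M(D)$ is the tree of minimum solvability, which is the claim.

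I do not expect a genuine obstacle at this final step: essentially all of the difficulty lies in Lemma~\ref{cha5.sol}, whose proof already handled the delicate part, namely the induction along the path between the two complete branches showing that the coefficients $\alpha=40M_{00}+48M_{01}$ and $\beta=40M_{10}+40M_{11}$ are positive, via the invariant sign pattern $M_{01},M_{11}\le 0$ and the strict positivity of $5M_{00}+6M_{01}$ and $5M_{10}+6M_{11}$, after which Lemma~\ref{Lem:RearrIneq} delivers the exchange property. The theorem stated here is simply the formal combination of that lemma with Theorem~\ref{cha5.thmm}.
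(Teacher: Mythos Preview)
Your proposal is correct and mirrors the paper's own argument exactly: the paper likewise notes that a minimiser is $\rho_5$-exchange-extremal by Lemma~\ref{cha5.sol}, that $\rho_5$ satisfies \ref{cond11}--\ref{cond31}, and then invokes Theorem~\ref{cha5.thmm} to conclude.
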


\section{Majorisation of degree sequences}\label{bounded}
We say that the sequence $A=(a_1,a_2,\dots,a_n)$ majorises $B=(b_1,b_2,\dots,b_n)$ if $a_1 + a_2 + \cdots + a_n = b_1 + b_2 + \cdots + b_n$ and $a_1+a_2+\cdots+a_k\geq b_1+b_2+\cdots+b_k$ for all $k$ with $1\leq k < n$. In this case we write $A\triangleright B$. Let $\mathbb{T}_n$ be the set of $n$-vertex trees. In this section we study the class of trees whose degree sequence is majorised by a fixed degree sequence $B$ of a tree, i.e.,
\[
\mathbb{T}_{\triangleleft B}=\{T\in \mathbb{T}_n: B \triangleright D, \: \text{where $D$ is the degree sequence of $T$}\}.
\]
Let $\rho$ be an invariant of rooted trees which satisfies a recursive relation as in Equation \eqref{cha5.eq1}, and let $\mathbb{S}_n$ denote the set of permutations of $1,2,\dots,n$. We say that the tree invariant $I$ is maximum-$\rho$-compatible if the following holds:

For every tree $H$ and every choice of two of its leaves $v$ and $w$, every collection of rooted trees $T_1,T_2,\dots ,T_r$ with $\rho(T_1)\geq \rho(T_2)\geq \dots \geq \rho(T_r)$, and every integer $s$ with $r/2 \leq s \leq r$, the maximum value of $I$ among all trees in the set 
\begin{align*}
\mathbb{K}_s=\{[T_{\sigma(1)},\dots,T_{\sigma(k)}]vHw[T_{\sigma(k+1)},\dots,T_{\sigma(r)}]: \sigma\in\mathbb{S}_r\text{ and }r-s\leq k\leq s\}
\end{align*}
is attained by (at least) one of the following two trees:
\[[T_1,\dots,T_s]vHw[T_{s+1},\dots,T_r]\text{ or }[T_{s+1},\dots,T_r]vHw[T_1,\dots,T_s].\]
In words, among all possibilities to attach $T_1,T_2,\dots,T_r$ to $v$ and $w$ in such a way that their degrees are not greater than some fixed bound $s+1$, the maximum value of $I$ is reached when the $s$ trees $T_i$ with largest $\rho$-values are attached to one of the two vertices~$v$ and~$w$, so that its degree is $s+1$. Note here that if $s\leq s'$ then $\mathbb{K}_s\subseteq \mathbb{K}_{s'}$. In the same way, one defines an invariant $I$ to be minimum-$\rho$-compatible (replacing ``maximum value'' by ``minimum value'' in the definition). 

Several examples from the previous sections fit this scheme. In each case, the proof is essentially the same as the proof that extremality with respect to $I$ implies $\rho$-exchange-extremality.

\begin{itemize}
\item The Wiener index is minimum-$\rho_0$-compatible, and this extends to the generalisations discussed in Subsections~\ref{subsec:Wab} and~\ref{subsec:steiner}. 
\item The number of subtrees is maximum-$\rho$-compatible with respect to the invariant $\rho(T)= \eta(T,\rr(T))$ defined in Subsection~\ref{subsec:subtrees}.
\item The weighted number of rooted spanning forests $\rf(T,x)$ discussed in Subsection~\ref{subsec:rsf} is minimum-$\rho_1$-compatible with respect to the invariant $\rho_1$ defined there.
\item The Hosoya index is minimum-$\rho_3$-compatible (see Subsection~\ref{subsec:hosoya}), and this extends to the matching generating polynomial $\Ma(T,x)$.
\item The Merrifield-Simmons index is maximum-$\rho_4$-compatible (Subsection~\ref{subsec:ms}).
\item The solvability is minimum-$\rho_5$-compatible (Subsection~\ref{subsec:solv}).
\end{itemize}

We note that for any maximum-$\rho$-compatible invariant $I$, a tree with degree sequence $D$ that attains the maximum value of $I$ is necessarily $\rho$-exchange-extremal, since the compatibility condition is an extension of $\rho$-exchange-extremality.

\begin{thm}
\label{Th:BoundedDegSeq}
Let $I$ be a maximum-$\rho$-compatible tree invariant, for some $\rho$ that satisfies \eqref{cha5.eq1}, and let $B$ be a degree sequence of a tree. Then 
\begin{itemize}
\item[i)] $\max\{I(T):T\in\mathbb{T}_{\triangleleft B}\}=I(\G(B))$ if $\rho$ satisfies properties~\ref{cond1},~\ref{cond2} and~\ref{cond3}, 
\item[ii)] $\max\{I(T):T\in\mathbb{T}_{\triangleleft B}\}=I(\M(B))$ if $\rho$ satisfies properties~\ref{cond11},~\ref{cond21} and~\ref{cond31}, 
\end{itemize}
\end{thm}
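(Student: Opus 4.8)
The plan is to bootstrap from the fixed–degree–sequence results: Theorem~\ref{cha5.thm} under \ref{cond1}--\ref{cond3} for part~i), and Theorem~\ref{cha5.thmm} under \ref{cond11}--\ref{cond31} for part~ii). Concretely, I would show that the maximum of $I$ over the finite set $\mathbb{T}_{\triangleleft B}$ is already attained by a tree whose degree sequence equals $B$. Once that is known, such a tree is in particular $I$-maximal among the trees with degree sequence $B$, hence $\rho$-exchange-extremal (this is exactly the observation recorded just before the theorem, since maximum-$\rho$-compatibility extends $\rho$-exchange-extremality), hence equal to $\G(B)$ (resp.\ $\M(B)$) by Theorem~\ref{cha5.thm} (resp.\ Theorem~\ref{cha5.thmm}); since $\G(B),\M(B)\in\mathbb{T}_{\triangleleft B}$, the claimed equality of maxima follows. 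So the whole content is a one-step lemma together with an iteration.

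The one-step lemma I would prove is: \emph{if $T\in\mathbb{T}_{\triangleleft B}$ has degree sequence $D\ne B$, then there is $T'\in\mathbb{T}_{\triangleleft B}$ with $I(T')\ge I(T)$ whose degree sequence $D'$ satisfies $D\triangleleft D'$ and $D'\ne D$.} Granting this, I start from an $I$-maximiser in $\mathbb{T}_{\triangleleft B}$ (which exists by finiteness) and iterate, producing a strictly increasing chain $D\triangleleft D'\triangleleft\cdots$ of tree degree sequences inside the finite majorisation poset, all majorised by $B$ together with $I$-maximisers realising them; the chain must terminate, and by the lemma it can only terminate at $B$, so we obtain an $I$-maximiser with degree sequence $B$, finishing the argument by the previous paragraph. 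To prove the lemma I would use the elementary ``reverse Robin Hood'' move: let $p$ be the least index with $d_p<b_p$ and $p'$ the least index $>p$ at which the prefix sums of $D$ and $B$ agree (one exists, since they agree at $n$). Then the prefix sums of $D$ are strictly below those of $B$ on $[p,p'-1]$, and from the agreement at $p'$ one gets $d_{p'}>b_{p'}\ge 1$, hence $d_{p'}\ge 2$ and $d_p\ge d_{p'}$ (as $D$ is sorted); letting $D'$ be $D$ with one copy of $d_p$ raised to $d_p+1$ and one copy of $d_{p'}$ lowered to $d_{p'}-1$ (re-sorted), a routine prefix-sum check gives $D\triangleleft D'$, $D'\ne D$, $D'$ still majorised by $B$, and $D'$ still a tree degree sequence. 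I realise $D'$ geometrically: choose distinct vertices $v,w$ of $T$ with $\deg v=d_p$ and $\deg w=d_{p'}$, write $T=[L_1,\dots,L_k]vHw[R_1,\dots,R_\ell]$ with $H$ the subtree spanned by the path from $v$ to $w$ together with all branches hanging off its interior vertices, so that $k=d_p-1\ge\ell=d_{p'}-1\ge 1$, and set $r=k+\ell$.

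To close the lemma I would invoke maximum-$\rho$-compatibility with $s=k+1$. One checks $r/2\le s\le r$ (using $k\ge\ell\ge 1$) and that $T$ lies in $\mathbb{K}_s$; the key point is that $\mathbb{K}_s\subseteq\mathbb{T}_{\triangleleft B}$, which holds because the only degrees that vary over $\mathbb{K}_s$ are $\deg v$ and $\deg w$, whose constant-sum pair ranges between $\{k+1,\ell+1\}$ (the pair occurring in $D$) and the maximally spread pair $\{k+2,\ell\}$ (the pair occurring in $D'$), so every degree sequence arising in $\mathbb{K}_s$ is majorised by $D'$ and hence by $B$. Now let $T^\star$ be whichever of the two trees $[T_1,\dots,T_{k+1}]vHw[T_{k+2},\dots,T_r]$ and $[T_{k+2},\dots,T_r]vHw[T_1,\dots,T_{k+1}]$ has the larger value of $I$, where $T_1,\dots,T_r$ lists the multiset $\{L_i\}\cup\{R_i\}$ in non-increasing $\rho$-order; by compatibility $I(T^\star)=\max_{\mathbb{K}_s}I$, and $T^\star$ has degree sequence $D'$. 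Then $I(T)\le\max_{\mathbb{K}_s}I=I(T^\star)$ since $T\in\mathbb{K}_s$, while $I(T^\star)\le\max_{\mathbb{T}_{\triangleleft B}}I=I(T)$; so $I(T^\star)=I(T)$ and we may take $T'=T^\star$. The hard part is precisely this last verification — selecting $s$ so that the exchange family $\mathbb{K}_s$ corresponds to exactly one majorisation step while staying entirely inside $\mathbb{T}_{\triangleleft B}$ — which is exactly where the clause ``for every integer $s$ with $r/2\le s\le r$'' in the definition of compatibility is used at full strength; everything else is either standard majorisation bookkeeping or a direct appeal to the fixed–degree–sequence theorems, and part~ii) is word-for-word identical with Theorem~\ref{cha5.thmm} and \ref{cond11}--\ref{cond31} replacing Theorem~\ref{cha5.thm} and \ref{cond1}--\ref{cond3}.
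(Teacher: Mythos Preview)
Your proposal is correct and follows essentially the same route as the paper: reduce to the fixed–degree–sequence theorems by showing that the maximum of $I$ over $\mathbb{T}_{\triangleleft B}$ is attained at degree sequence $B$, via a chain of ``unit transfers'' (one entry up by $1$, one down by $1$) linking $D$ to $B$, using maximum-$\rho$-compatibility at each step to pass to a tree with the next degree sequence without decreasing $I$. The paper pre-computes the whole chain $B=B_0\triangleright B_1\triangleright\cdots\triangleright B_p=D$ top-down (choosing the smallest index $r$ with $d_r>b_r$ and the largest $\ell<r$ with $d_\ell<b_\ell$), while you build it bottom-up from $D$ (choosing the first deficit index $p$ and the first subsequent index $p'$ where prefix sums agree); both yield the same elementary transfer, and your extra check $\mathbb{K}_s\subseteq\mathbb{T}_{\triangleleft B}$ is a harmless strengthening.
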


\begin{proof}
We only prove part i), the proof of part ii) is analogous. As mentioned earlier, by definition of maximum-$\rho$-compatibility of $I$, $\max\{I(T):T\in\mathbb{T}_{\triangleleft B}\}$ is reached by a $\rho$-exchange-extremal tree, say $E$. It is only left to prove that $E$ can be chosen to have degree sequence $B$. Let $T\in\mathbb{T}_{\triangleleft B}$, with a degree sequence $D$ (thus $D\triangleleft B$), such that $D=(d_1,d_2,\dots,d_n) \neq B=(b_1,b_2,\dots,b_n)$. Let $r$ be the smallest index for which $d_r > b_r$ (such an index must exist, since $B$ and $D$ have the same sum), and let $\ell$ be the largest index less than $r$ such that $d_{\ell} < b_{\ell}$ (such an index must exist, since $b_1 + b_2 + \cdots b_r \geq d_1 + d_2 + \cdots + d_r$ by majorisation). Observe that $d_i = b_i$ for $\ell < i < r$. Now define $B_1 = (b'_1,b'_2,\dots,b'_n)$ by $b'_i = b_i$ for $i \notin \{\ell,r\}$, $b'_{\ell} = b_{\ell}-1$ and $b'_r = b_r+1$. We note that $B_1$ is still non-increasing, thus a valid degree sequence: $b'_{\ell} = b_{\ell}-1 \geq d_{\ell} \geq d_{\ell+1} \geq b'_{\ell+1}$ and $b'_r = b_r+1 \leq d_r \leq d_{r-1} \leq b'_{r-1}$. Moreover, it is easy to verify that $D \triangleleft B_1$, since $b'_i \geq d_i$ for $i < r$, while $b'_1 + b'_2 + \cdots + b'_j = b_1 + b_2 + \cdots + b_j$ for $j \geq r$. Clearly, we also have $B_1 \triangleleft B$. We can repeat this procedure with $B_1$ and $D$ to obtain a degree sequence $B_2$ with $D \triangleleft B_2 \triangleleft B_1$, and so on. This yields a sequence of degree sequences $B = B_0,B_1,B_2,\ldots,B_p = D$ such that
\[B = B_0 \triangleright B_1 \triangleright \cdots \triangleright B_p = D,\]
and $B_i$ and $B_{i+1}$ always only differ in two entries (by exactly $1$ each). We therefore restrict ourselves first to the case that $B$ and $D$ only differ in two entries: $b_{\ell} = d_{\ell}+1$, $b_r = d_r -1$ and $b_i = d_i$ for $i \notin \{\ell,r\}$.

Let $v$ and $w$ be distinct vertices in $T$ such that $d(v)=d_{\ell} \geq d_r= d(w)$.
Then $T$ can be decomposed as $[T_1,\dots,T_{d_{\ell}-1}]vHw[T_{d_{\ell}},\dots,T_{d_{\ell}+d_r-2}]$ for some tree $H$ and branches $T_1,T_2,\dots$ $,T_{d_{\ell}+d_r-2}$. Let $\sigma\in \mathbb{S}_{d_{\ell}+d_r-2}$ such that $\rho(T_{\sigma(1)})\geq \rho(T_{\sigma(2)})\geq \dots\geq \rho(T_{\sigma(d_{\ell}+d_r-2)})$. 

By the definition of maximum-$\rho$-compatibility, we can find a tree $T'$, which is either
\begin{align*}
&[T_{\sigma(1)},\dots,T_{\sigma(d_{\ell})}]vHw[T_{\sigma(d_{\ell}+1)},\dots,T_{\sigma(d_{\ell}+d_r-2)}]\quad \text{or}\\
&[T_{\sigma(d_{\ell}+1)},\dots,T_{\sigma(d_{\ell}+d_r-2)}]vHw[T_{\sigma(1)},\dots,T_{\sigma(d_{\ell})}],
\end{align*}
such that its degree sequence is $B$ and $I(T) \leq I(T')$. In the general case, we can find a sequence of trees $T_0,T_1,\ldots,T_p = T$ whose degree sequences are $B = B_0, B_1,\ldots,B_p = D$ respectively, such that
\[I(T_0) \geq I(T_1) \geq \cdots \geq I(T_p) = I(T).\]
This shows that the extremal tree $E$ mentioned earlier can indeed be chosen to have degree sequence $B$. The statement of the theorem follows.
\end{proof}

\begin{cor}\label{corbounded}
Let $\mathbb{A}\subseteq \mathbb{T}_n$ be a class of trees with $n$ vertices and $B$ a degree sequence such that for every tree $T\in \mathbb{A}$, its degree sequence $D$ satisfies $D \triangleleft B$. If $I$ is a maximum-$\rho$-compatible tree invariant, then
\begin{itemize}
\item $\max\{I(T):T\in \mathbb{A}\}=I(\G(B))$
if $\rho$ satisfies~\ref{cond1},~\ref{cond2} and~\ref{cond3} and $\G(B)\in \mathbb{A}$,
\item $\max\{I(T):T\in \mathbb{A}\}=I(\M(B))$
if $\rho$ satisfies~\ref{cond11},~\ref{cond21} and~\ref{cond31} and $\M(B)\in \mathbb{A}.$
\end{itemize}
Again, an analogous statement holds if $I$ is minimum-$\rho$-compatible (replacing $\max$ by $\min$).
\end{cor}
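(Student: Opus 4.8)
The plan is to derive this as an immediate consequence of Theorem~\ref{Th:BoundedDegSeq}. First I would observe that the hypothesis that every tree $T \in \mathbb{A}$ has degree sequence $D \triangleleft B$ means precisely that $\mathbb{A} \subseteq \mathbb{T}_{\triangleleft B}$. Consequently, for any tree invariant $I$ we trivially have
\[
\max\{I(T) : T \in \mathbb{A}\} \leq \max\{I(T) : T \in \mathbb{T}_{\triangleleft B}\}.
\]
For the first bullet, suppose $I$ is maximum-$\rho$-compatible and $\rho$ satisfies \ref{cond1}, \ref{cond2}, \ref{cond3}. Then Theorem~\ref{Th:BoundedDegSeq}(i) gives $\max\{I(T) : T \in \mathbb{T}_{\triangleleft B}\} = I(\G(B))$. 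Combining this with the displayed inequality yields $\max\{I(T) : T \in \mathbb{A}\} \leq I(\G(B))$. On the other hand, the extra assumption $\G(B) \in \mathbb{A}$ shows that this upper bound is actually attained within $\mathbb{A}$, so $\max\{I(T) : T \in \mathbb{A}\} = I(\G(B))$, as claimed.

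The second bullet is entirely analogous: if $I$ is maximum-$\rho$-compatible and $\rho$ satisfies \ref{cond11}, \ref{cond21}, \ref{cond31}, then Theorem~\ref{Th:BoundedDegSeq}(ii) gives $\max\{I(T) : T \in \mathbb{T}_{\triangleleft B}\} = I(\M(B))$, and the assumption $\M(B) \in \mathbb{A}$ shows this value is realised in $\mathbb{A}$. Finally, the statement about minimum-$\rho$-compatible invariants follows by the same argument applied to $-I$ (or, equivalently, by running the proof of Theorem~\ref{Th:BoundedDegSeq} with all inequalities reversed, as was already noted there), so there is nothing new to check.

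Since the heavy lifting — the majorisation/exchange argument that moves a tree with degree sequence $D$ up to one with degree sequence $B$ without decreasing $I$ — is already done inside Theorem~\ref{Th:BoundedDegSeq}, the only genuine content here is recognising that restricting from $\mathbb{T}_{\triangleleft B}$ to a subclass $\mathbb{A}$ that still contains the relevant extremal tree ($\G(B)$ or $\M(B)$) cannot change the extremal value. There is no real obstacle; the proof is a two-line deduction, and the remark about minimum-$\rho$-compatibility needs only the standard sign flip.
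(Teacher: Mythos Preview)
Your proposal is correct and follows exactly the paper's approach: observe that the hypothesis gives $\mathbb{A}\subseteq\mathbb{T}_{\triangleleft B}$, then invoke Theorem~\ref{Th:BoundedDegSeq} together with the assumption that $\G(B)$ (respectively $\M(B)$) lies in $\mathbb{A}$. The paper states this in a single sentence; your version just spells out the obvious inequality and the attainment step.
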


\begin{proof}
The degree sequence condition implies that $\mathbb{A} \subseteq \mathbb{T}_{\triangleleft B}$, so the statement is immediate from Theorem~\ref{Th:BoundedDegSeq}.
\end{proof}

We conclude this paper by discussing different applications of Corollary~\ref{corbounded}.

\begin{itemize}
\item The first example is the class of all trees of order $n$, $\mathbb{A}=\mathbb{T}_n$, where we can take $B=(n-1,1,\dots,1)$. The only tree with this degree sequence is the star $S_n=\G(B)=\M(B)$. Therefore, we obtain the following known results as immediate corollaries: among all $n$-vertex trees, the star minimises the Wiener index \cite{dobrynin}, the Hosoya index and the energy \cite{gutman}, the quantity $\rf(T,x)$ \cite{Zhou} for every $x>0$ (thus also the incidence energy) and the solvability \cite{Hatzl}, while it maximises the number of subtrees \cite{Szek2} and the Merrifield-Simmons index \cite{Prodinger101}. 

\item Next, we take $\mathbb{A}$ to be the set of all $n$-vertex trees whose vertex degrees are at most equal to $d$. Here, we can take $B=(d,\dots,d,r,1,\dots,1)$, where $1\leq r< d$ and $r \equiv n-1 \mod d-1$. The greedy tree $\G(B)$ is in this case called a Volkmann tree. Again, several extremality results now follow as direct applications of Corollary~\ref{corbounded}. For example, the Volkmann tree minimises the Wiener index \cite{fischermann} and the polynomial $\rf(T,x)$ for every $x > 0$ \cite{Jin} (thus also the incidence energy), while it maximises the number of subtrees \cite{kirk}. On the other hand, the $\M$-tree $\M(B)$ is known to be extremal for the Hosoya index, the energy of a graph and the Merrifield-Simmons index \cite{Heub48,He}. We also obtain new results for the Steiner Wiener index (which is minimised by $\G(B)$) and the solvability (which is minimised by $\M(B)$).

\item Next, let $\mathbb{A}$ be the set of $n$-vertex trees with exactly $\ell$ leaves. Now we can take $B=(\ell,2,\ldots,2,1,\ldots,1)$ ($n-\ell-1$ copies of $2$, $\ell$ copies of $1$) and find that either $\G(B)$ or $\M(B)$ is extremal for each of the invariants we considered in the previous sections, thus recovering several known results (and adding some new ones). For instance, the greedy tree $\G(B)$ is known to minimise the Wiener index \cite{dobrynin,wang10} and the polynomial $\rf(T,x)$ as well as the incidence energy \cite{ilic}. On the other hand, it maximises the number of subtrees \cite{Andriantiana1}. Similarly, the extremal tree for the Hosoya index, the Merrifield-Simmons index and the energy is known to be $\M(B)$ \cite{Yu,yeyuan,yulv}. We find now also that $\G(B)$ minimises the Steiner Wiener index, while $\M(B)$ minimises the solvability.

\item Finally, let $\mathbb{A}$ be the set of $n$-vertex trees having $r$ branching vertices, i.e., vertices of degree greater than or equal to $3$. Here, we can take $B = (n-2r+1,3,\ldots,3,1,\ldots,1)$ ($r-1$ copies of $3$, $n-r$ copies of $1$). It was shown in~\cite{Hong} that the greedy tree $\G(B)$ minimises the Wiener index in this class of trees if $r \leq \frac{n+2}{3}$, while the case $r > \frac{n+2}{3}$ was left as an open problem. Minimality of $\G(B)$ with respect to the Wiener index is a consequence of our results for arbitrary $r$ now, and we also obtain extremality of $\G(B)$ or $\M(B)$ for all the other invariants mentioned in this paper.
 
\end{itemize}

\bibliographystyle{abbrv} 
\bibliography{Extremal}

\end{document}